\documentclass[11pt,twoside]{article}

\usepackage{amsfonts,amsmath,amssymb,amsthm}
\usepackage{fullpage}
\usepackage{graphics,graphicx}
\usepackage{cases}
\usepackage{comment}

\setlength{\textwidth}{\paperwidth}
\addtolength{\textwidth}{-6cm}
\setlength{\textheight}{\paperheight}
\addtolength{\textheight}{-4cm}
\addtolength{\textheight}{-1.1\headheight}
\addtolength{\textheight}{-\headsep}
\addtolength{\textheight}{-\footskip}
\setlength{\oddsidemargin}{0.5cm}
\setlength{\evensidemargin}{0.5cm}

\newlength{\widebarargwidth}
\newlength{\widebarargheight}
\newlength{\widebarargdepth}
\DeclareRobustCommand{\widebar}[1]{%
  \settowidth{\widebarargwidth}{\ensuremath{#1}}%
  \settoheight{\widebarargheight}{\ensuremath{#1}}%
  \settodepth{\widebarargdepth}{\ensuremath{#1}}%
  \addtolength{\widebarargwidth}{-0.3\widebarargheight}%
  \addtolength{\widebarargwidth}{-0.3\widebarargdepth}%
  \makebox[0pt][l]{\hspace{0.3\widebarargheight}%
    \hspace{0.3\widebarargdepth}%
    \addtolength{\widebarargheight}{0.3ex}%
    \rule[\widebarargheight]{0.95\widebarargwidth}{0.1ex}}%
  {#1}}


\usepackage[colorlinks]{hyperref}

\newcommand{\condind}{\ensuremath{\perp\!\!\!\perp}}
\newcommand{\opnorm}[1]{\left|\!\left|\!\left|{#1}\right|\!\right|\!\right|}
\newcommand{\Cov}{\ensuremath{\operatorname{Cov}}}
\newcommand{\Var}{\ensuremath{\operatorname{Var}}}
\newcommand{\E}{\ensuremath{\mathbb{E}}}
\newcommand{\mprob}{\ensuremath{\mathbb{P}}}

\newcommand{\betahat}{\ensuremath{\widehat{\beta}}}
\newcommand{\betastar}{\ensuremath{\beta^*}}
\newcommand{\betatil}{\ensuremath{\widetilde{\beta}}}
\newcommand{\nuhat}{\ensuremath{\widehat{\nu}}}
\newcommand{\real}{\ensuremath{\mathbb{R}}}
\newcommand{\defn}{\ensuremath{:=}}
\newcommand{\inprod}[2]{\ensuremath{\langle #1 , \, #2 \rangle}}
\newcommand{\sign}{\ensuremath{\operatorname{sign}}}
\newcommand{\supp}{\ensuremath{\operatorname{supp}}}

\newcommand{\order}{{\mathcal{O}}}

\newcommand{\nutil}{\ensuremath{\widetilde{\nu}}}
\newcommand{\Loss}{\ensuremath{\mathcal{L}}}

\newcommand{\Ball}{\ensuremath{\mathbb{B}}}
\newcommand{\var}{\ensuremath{\operatorname{var}}}

\newcommand{\ftil}{\ensuremath{\widetilde{f}}}
\newcommand{\scriptD}{\ensuremath{\mathcal{D}}}

\newcommand{\htil}{\ensuremath{\widetilde{h}}}

\newcommand{\sigmahat}{\ensuremath{\widehat{\sigma}}}
\newcommand{\sigmastar}{\ensuremath{\sigma^*}}

\newcommand{\Sigmahat}{\ensuremath{\widehat{\Sigma}}}

\newcommand{\scriptT}{\ensuremath{\mathcal{T}}}
\newcommand{\ellbar}{\ensuremath{\widebar{\ell}}}
\newcommand{\sigmatil}{\ensuremath{\widetilde{\sigma}}}
\newcommand{\sigmabar}{\ensuremath{\widebar{\sigma}}}

\newcommand{\scriptJ}{\ensuremath{\mathcal{J}}}

\newcommand{\muhat}{\ensuremath{\widehat{\mu}}}
\newcommand{\bhat}{\ensuremath{\widehat{b}}}
\newcommand{\Thetahat}{\ensuremath{\widehat{\Theta}}}
\newcommand{\scriptN}{\ensuremath{\mathcal{N}}}
\newcommand{\mvec}{\ensuremath{\operatorname{vec}}}

\newcommand{\scriptC}{\ensuremath{\mathcal{C}}}

\newcommand{\Ahat}{\ensuremath{\widehat{A}}}
\newcommand{\Btil}{\ensuremath{\widetilde{B}}}
\newcommand{\scriptS}{\ensuremath{\mathcal{S}}}
\newcommand{\tr}{\ensuremath{\operatorname{tr}}}
\newcommand{\Gammastar}{\ensuremath{\Gamma^*}}
\newcommand{\Sigmastar}{\ensuremath{\Sigma^*}}
\newcommand{\Thetastar}{\ensuremath{\Theta^*}}

\newcommand{\uhat}{\ensuremath{\widehat{u}}}
\newcommand{\scriptB}{\ensuremath{\mathcal{B}}}
\newcommand{\Lep}{\ensuremath{\operatorname{Lep}}}
\newcommand{\med}{\ensuremath{\operatorname{med}}}
\newcommand{\MAD}{\ensuremath{\operatorname{MAD}}}
\newcommand{\deltahat}{\ensuremath{\widehat{\delta}}}

\newtheorem{lem*}{Lemma}
\newtheorem{thm*}{Theorem}
\newtheorem{alg*}{Algorithm}
\newtheorem{cor*}{Corollary}
\newtheorem{prop*}{Proposition}
\newtheorem{rem}{Remark}
\newtheorem{assumption}{Assumption}
\newtheorem{definition}{Definition}
\newtheorem{example*}{Example}

\bibliographystyle{plain}

\begin{document}

\begin{center}
	
{\bf{\LARGE{Scale calibration for high-dimensional robust regression}}}
	
\vspace*{.2in}

\begin{tabular}{c}
{\large{Po-Ling Loh}} \\
{\large{\texttt{loh@ece.wisc.edu}}} \\
\vspace*{.005in} \\
Departments of Electrical \& Computer Engineering and Statistics \\
University of Wisconsin - Madison \\
Madison, WI 53706
\end{tabular}

\vspace*{.2in}

November 5, 2018

\end{center}

\begin{abstract}

We present a new method for high-dimensional linear regression when a scale parameter of the additive errors is unknown. The proposed estimator is based on a penalized Huber $M$-estimator, for which theoretical results on estimation error have recently been proposed in high-dimensional statistics literature. However, the variance of the error term in the linear model is intricately connected to the optimal parameter used to define the shape of the Huber loss. Our main idea is to use an adaptive technique, based on Lepski's method, to overcome the difficulties in solving a joint nonconvex optimization problem with respect to the location and scale parameters.

\end{abstract}


\section{Introduction}

Robust statistics, in its classical form, is a mature and established field~\cite{Hub81, MarEtal06, HamEtal11}. Recently, notions from robust statistics such as $\epsilon$-contamination and influence functions have surfaced in theoretical computer science and machine learning~\cite{EliPon03, KohLia17}. The use of the Huber loss in place of a squared error loss to encourage robustness has long been adopted in engineering fields, as well~\cite{Fes06}.

In statistics, a small but growing body of work concerns analyzing high-dimensional analogs of classical robust estimators \cite{KhaEtal07, WanEtal07, Mar11, Bra15, Loh17, FanEtal17, SmuYoh17, SunEtal17, FreEtal17}. The basic premise is that although it is relatively straightforward to devise reasonable high-dimensional estimators, theoretical analysis may become somewhat trickier in high dimensions \cite{MedMar14}. Furthermore, special care must be taken when optimizing such objective functions over a high-dimensional space \cite{AlfEtal13}.

Our previous work \cite{Loh17} developed a theory for robust high-dimensional linear regression estimators using penalized $M$-estimation. The main contribution was to show that global optima of $\ell_1$-penalized $M$-estimators enjoy the same rates of convergence as minimizers of the Lasso program, when the $M$-estimation loss function is convex and has a bounded derivative---without requiring a Gaussian or sub-Gaussian assumption on the additive errors. In fact, we also established that local optima of penalized $M$-estimators with a nonconvex, bounded-derivative loss are statistically consistent within a constant-radius region of the global optimum, and such local optima may be obtained via a two-step process initialized using a global optimum of the $\ell_1$-penalized Huber loss.

However, a drawback of Loh~\cite{Loh17}, as well as other related work on penalized $M$-estimation \cite{FanEtal17, SunEtal17}, is that the theoretically optimal choice of the parameter involved in defining the Huber loss depends crucially on the scale of the additive errors. This should not be surprising, given that similar complications were recognized in low-dimensional settings for location estimation when prior knowledge of the scale was unavailable~\cite{Hub64}. The ``adaptive" methods proposed for low-dimensional robust regression \cite{Jae71, Hog74} are mostly heuristic suggestions involving, for example, computing the Huber regression estimate over a grid of values and choosing the parameter that minimizes a surrogate for asymptotic variance.
Even in low dimensions, a theoretical gap has remained in terms of how to rigorously calibrate the Huber loss function in a finite-sample setting.

In this paper, we introduce a new solution to the problem of adaptively choosing the scale parameter of a robust $M$-estimator. The key tool is Lepski's method, and the key observation is that whenever the Huber loss parameter is larger than the true scale parameter of the additive errors, it is possible to derive $\ell_1$- and $\ell_2$-error bounds on the global optimum that increase linearly with the choice of Huber parameter. This allows us to apply Lepski's method to obtain an estimator that behaves at least as well as an oracle estimator. Importantly, our method bypasses the hard optimization problem of jointly estimating the location and scale. We note that Lepski's method could also be invoked in the low-dimensional, unpenalized setting to rigorously obtain robust regression estimators without needing to optimize a nonconvex problem in an ad hoc manner.

\textbf{Related work:} Other proposals for regression with heavy-tailed errors includes work by Hsu and Sabato~\cite{HsuSab16}, Minsker~\cite{Min15}, and Lugosi and Mendelson~\cite{LugMen17}. However, many of these methods focus on situations where the covariates are well-behaved, and all of them assume that an upper bound on the error variance is known. In contrast, our method produces consistent estimators under extremely weak assumptions on the covariates, and encompasses situations where preliminary scale estimates are notoriously difficult to obtain.

Another important related work is by Chichignoud et al.~\cite{ChiEtal16}, who suggest an adaptive method for tuning parameter selection in the Lasso based on Lepski's method. However, the main focus in that paper is in obtaining near-optimal bounds on the $\ell_\infty$-error. Importantly, the objective function still involves a least-squares loss as in the classical Lasso, whereas our objective functions are designed for robust regression and have the corresponding parameter linked to the regularization parameter involved in the $\ell_1$-norm.

\textbf{Notation:} We write $a \precsim b$ if $a$ is less than or equal to $b$, up to a positive constant, and we define $a \succsim b$ analogously. For a vector $v \in \real^p$, we write $\supp(v) \subseteq \{1, \dots, p\}$ to denote the support of $v$, and for an arbitrary subset $S
\subseteq \{1, \dots, p\}$, we write $v_S \in \real^S$ to denote the vector $v$ restricted to $S$. For a matrix $M$, we write $\opnorm{M}_{q}$ to denote the $\ell_q$-operator norm, and we write $\|M\|_{\max}$ to denote the elementwise $\ell_\infty$-norm. We write $\mvec(M)$ to denote the vectorized version of the matrix. Finally, we use the notation $c, C', c_0$, etc., to denote universal positive constants, where we may use the same notation to refer to different constants as we move between results. We use the abbreviation ``w.h.p." to refer to an event occurring with probability tending to 1.


\section{Background and problem setup}

We begin by describing the regression model to be studied in our paper. We also discuss several previously existing proposals in the literature.

\subsection{Model and assumptions}

Consider observations $\{(x_i, y_i)\}_{i=1}^n$ from the linear model
\begin{equation}
\label{EqnLinear}
y_i = x_i^T \betastar + \epsilon_i,
\end{equation}
where $\betastar \in \real^p$ is the unknown regression parameter vector and the $x_i$'s and $\epsilon_i$'s are i.i.d.\ draws from covariate and error distributions, such that $x_i \condind \epsilon_i$ and $\E[\epsilon_i] = \E[x_i] = 0$. In other words, we assume a ``random carrier" rather than fixed design model of regression. Note that our results could be adapted to the fixed design setting in a fairly straightforward manner; however, we are primarily interested in a setting where the distribution of the covariates comes from a heavy-tailed distribution, leading to high-leverage points. We assume that $\|\betastar\|_0 \le k$, where $k < n \ll p$, and denote $S := \supp(\betastar)$.

We will assume that the distribution of $\epsilon_i$ is symmetric, which will be required---as in the case of classical robust regression analysis---for our $M$-estimators to be consistent. We will introduce additional assumptions on the distributions of the $\epsilon_i$'s and $x_i$'s in Section~\ref{SecProbCond}.


\subsection{Previous work}

We now briefly describe several previously proposed methods for robust linear regression in high dimensions. We focus on methods that have been devised to handle outliers in the covariates, since our proposed algorithm is provably consistent in such settings, as well. (For additional related work, see the references cited in the introduction.)

The sparse least trimmed squares (LTS) estimator \cite{AlfEtal13} aims to optimize the objective
\begin{equation*}
\betahat \in \arg\min_{\beta}\left\{\frac{1}{h} \sum_{i=1}^h r_{(i)}^2 + \lambda \|\beta\|_1\right\},
\end{equation*}
where the $r_{(i)}^2$'s are the sorted residuals $\{(y_i - x_i^T \beta)^2\}$ in ascending order, and $h \le n$ is a truncation parameter. This is an $\ell_1$-penalized version of the least trimmed squares estimator \cite{Rou84}. Although sparse LTS has been shown to perform well in simulations, only a heuristic algorithm has been proposed for optimizing the objective, and statistical properties of both global and local optima is absent from the literature.

The $S$-Bridge estimator \cite{Mar11, SmuYoh17} is defined via the objective function
\begin{equation*}
\betahat \in \arg\min_\beta \left\{s^2(r(\beta)) + \lambda \|\beta\|_r^r\right\},
\end{equation*}
where $r > 0$, and $s(r(\beta))$ is a robust scale estimator based on the residuals $\{y_i - x_i^T \beta\}$. The $MM$-Bridge estimator is defined by
\begin{equation*}
\betahat \in \arg\min_\beta \left\{\frac{1}{n} \sum_{i=1}^n \rho_1\left(\frac{r_i(\beta)}{s(r(\betahat_1))}\right) + \lambda \|\beta\|_r^r\right\},
\end{equation*}
where $\rho_1$ is a robust loss function and $\betahat_1$ is an initial estimate of $\betastar$; for $r = 1$, this method is also known as the $MM$-Lasso. Smucler and Yohai~\cite{SmuYoh17} derived the asymptotic consistency of global optima when the loss function $\rho_1$ is of a redescending type, meaning that $\rho_1'$ is eventually equal to 0. However, the results are asymptotic, and again, no guarantees are provided for the performance of local optima, which may result from the optimization algorithm proposed by the authors. Penalized $S$-estimators are further analyzed in Freue et al.~\cite{FreEtal17}.

Our work builds upon Loh~\cite{Loh17}, which studied local and global optima of penalized $M$-estimators. The main contribution in that work is a rigorous nonasymptotic analysis of global optima in the convex case, as well as an analysis of certain consistent local optima when the objective function is nonconvex. However, the success of the methods proposed in that paper require the parameter of the Huber loss to be chosen correctly, i.e., upper-bounding an expression involving moments and tails of the error distribution. Since this information would generally be unknown a priori, the question of how to choose the Huber parameter in an adaptive manner remained unanswered.

Finally, we mention methods based on joint estimation of location and scale. One natural approach is to jointly minimize the objective function
\begin{equation*}
(\betahat, \sigmahat) \in \arg\min_{\beta, \sigma} \left\{\frac{1}{n} \sum_{i=1}^n \ell\left(\frac{y_i - x_i^T \beta}{\sigma}\right)\right\}
\end{equation*}
(or a high-dimensional analog thereof). However, even when the loss function is convex, this leads to a highly nonconvex objective. Iteratively optimizing with respect to $\beta$ and $\sigma$ motivates the $MM$-estimator~\cite{Yoh87}, but theoretical guarantees in terms of both statistical consistency and convergence of the optimization algorithm are largely absent from the literature. Huber~\cite{Hub81} also proposed the concomitant estimator:
\begin{equation}
\label{EqnHubConcom}
(\betahat, \sigmahat) \in \arg\min_{\beta, \sigma} \left\{\frac{1}{n} \sum_{i=1}^n \ell\left(\frac{y_i - x_i^T \beta}{\sigma}\right) \sigma + a\sigma \right\},
\end{equation}
where $a$ is an appropriate constant. The key insight is that if $\ell$ is a convex function, the loss function $\Loss_n(\beta, \sigma)$ defined here is also jointly convex in $(\beta, \sigma)$. 
%
%
However, the choice of the correct constant $a$ to provide consistency is a bit more tricky. A small calculation shows that if we denote $\Loss(\beta, \sigma) = \E\left[\Loss_n(\beta, \sigma)\right]$, we have $\nabla \Loss(\betastar, \sigmastar) = 0$ provided
%
%
\begin{equation*}
a = \E\left[\ell'\left(\frac{\epsilon_i}{\sigmastar}\right) \frac{\epsilon_i}{\sigmastar} - \ell\left(\frac{\epsilon_i}{\sigmastar}\right)\right]
\end{equation*}
holds. Thus, some prior knowledge of the distribution of $\epsilon_i$ is required to choose $a$ appropriately. In contrast, our method results in a consistent estimate of $\betastar$ whenever $\epsilon_i$ has a symmetric distribution.
%
Another important issue is that if $\ell$ is nonconvex---as is recommended to deal with high-leverage points in the covariates---Huber's estimator~\eqref{EqnHubConcom} would no longer be jointly convex, leading to a more tricky analysis of local optima in the $(\beta, \sigma)$ parameter space.

\section{Adaptive scale estimation}

Consider the Huber loss function
\begin{equation*}
\ell_\tau(u) = \begin{cases}
\frac{u^2}{2}, & \text{if } |u| \le \tau, \\
\tau |u| - \frac{\tau^2}{2}, & \text{if } |u| > \tau,
\end{cases}
\end{equation*}
defined with respect to a parameter $\tau > 0$. Importantly, the Huber loss is differentiable, and
$\|\ell'_\tau\|_\infty \le \tau$. We also define the weight function $w: \real^p \rightarrow \real$, such that
\begin{equation*}
w(x) = \min\left\{1, \frac{b}{\|Bx\|_2}\right\},
\end{equation*}
where $b \in \real$ and $B \in \real^{p \times p}$ are fixed constants. Importantly,
\begin{equation*}
\|w(x) x\|_2 \le \frac{b\|x\|_2}{\|Bx\|_2} \le \frac{b}{\lambda_{\min}(B)} := b'.
\end{equation*}
We form the $\ell_1$-regularized Huber estimator
\begin{equation}
\label{EqnHuberReg}
\betahat_\tau \in \arg\min_\beta \left\{\frac{1}{n} \sum_{i=1}^n \ell_\tau\left((x_i^T \beta - y_i)w(x_i)\right) w(x_i) + \lambda \tau \|\beta\|_1\right\}.
\end{equation}
Note that when the $x_i$'s are well-behaved (e.g., sub-Gaussian), we may set $w \equiv 1$, somewhat simplifying the analysis.

In what follows, we define $\sigmastar := \sqrt{\Var(\epsilon_i)}$. The proof of the following theorem, based on arguments developed in Loh~\cite{Loh17}, is contained in Appendix~\ref{AppThmHubOutput}:
\begin{thm*}
\label{ThmHubOutput}
Suppose $\tau \ge 3\sigma^*$ and $\lambda \ge 2c_0b' \sqrt{\frac{\log p}{n}}$, and $n \succsim k \log p$. The estimate $\betahat$ from $\ell_1$-penalized Huber regression with parameter $\tau$ satisfies
\begin{align*}
\|\betahat_\tau - \betastar\|_2 & \le C\tau \lambda \sqrt{k}, \\
\|\betahat_\tau - \betastar\|_1 & \le 4\sqrt{k} \|\betahat_\tau - \betastar\|_2,
\end{align*}
with probability at least $1 - c_1\exp(-c_2n)$.
\end{thm*}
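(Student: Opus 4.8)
The objective in~\eqref{EqnHuberReg} is convex in $\beta$, since the Huber loss is convex and the weights $w(x_i)$ do not depend on $\beta$; hence $\betahat_\tau$ is a global minimizer, and the plan is to follow the standard two-ingredient recipe for $\ell_1$-regularized $M$-estimators---a deviation bound on the gradient at $\betastar$, together with a restricted strong convexity (RSC) condition---and then combine them through the usual deterministic argument. Throughout I write $\Loss_n(\beta) = \frac{1}{n}\sum_{i=1}^n \ell_\tau\big((x_i^T\beta - y_i)w(x_i)\big)w(x_i)$, $\Delta = \betahat_\tau - \betastar$, and $S = \supp(\betastar)$.

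First I would control the gradient at the truth. Differentiating and using $x_i^T\betastar - y_i = -\epsilon_i$ gives $[\nabla\Loss_n(\betastar)]_j = \frac{1}{n}\sum_{i=1}^n \ell_\tau'(-\epsilon_i w(x_i))\,w(x_i)^2 x_{ij}$. Because $\epsilon_i$ is symmetric and independent of $x_i$ while $\ell_\tau'$ is odd, each summand is conditionally mean-zero, so $\E[\nabla\Loss_n(\betastar)]=0$. Each summand is also bounded: $\|\ell_\tau'\|_\infty \le \tau$ and $w(x_i)^2|x_{ij}| \le w(x_i)\,\|w(x_i)x_i\|_2 \le b'$, whence $\big|\ell_\tau'(-\epsilon_i w(x_i))w(x_i)^2 x_{ij}\big| \le \tau b'$. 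Hoeffding's inequality together with a union bound over the $p$ coordinates then yields $\opnorm{\nabla\Loss_n(\betastar)}_\infty$--- i.e. $\|\nabla\Loss_n(\betastar)\|_\infty \le c_0 b' \tau \sqrt{\tfrac{\log p}{n}}$ with high probability, which is exactly the choice $\lambda \ge 2c_0 b'\sqrt{\tfrac{\log p}{n}}$ needed to guarantee $\lambda\tau \ge 2\|\nabla\Loss_n(\betastar)\|_\infty$.

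The hard part will be the RSC condition: I would show that, with probability at least $1 - c_1\exp(-c_2 n)$, uniformly over $\Delta$ in a constant-radius ball,
\begin{equation*}
\inprod{\nabla\Loss_n(\betastar+\Delta) - \nabla\Loss_n(\betastar)}{\Delta} \ge \alpha \|\Delta\|_2^2 - c_3\frac{\log p}{n}\|\Delta\|_1^2 .
\end{equation*}
Writing $v_i = x_i^T\Delta$, each summand $\big[\ell_\tau'((v_i-\epsilon_i)w(x_i)) - \ell_\tau'(-\epsilon_i w(x_i))\big]w(x_i)^2 v_i$ is nonnegative by monotonicity of $\ell_\tau'$, and taking the conditional expectation over $\epsilon_i$ the local curvature survives because $\ell_\tau''(s) = \mathbb{1}[|s|<\tau]$ almost everywhere, so the quadratic region contributes a factor $\mprob(|\epsilon_i| < \tau)$. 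This is precisely where $\tau \ge 3\sigmastar$ enters: Chebyshev's inequality gives $\mprob(|\epsilon_i| < 3\sigmastar) \ge 1 - \tfrac{1}{9}$, a constant lower bound on the curvature. The lower-eigenvalue problem condition on $\E[w(x)^3 x x^T]$ then delivers the population bound $\alpha\|\Delta\|_2^2$, and a truncation-based empirical-process argument in the spirit of Loh~\cite{Loh17} converts this to the stated empirical inequality with the $\tfrac{\log p}{n}\|\Delta\|_1^2$ tolerance term; handling the heavy-tailed, unbounded covariates through the weight $w$ is what makes this step delicate.

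Finally I would assemble the two ingredients deterministically. Global optimality gives $\Loss_n(\betahat_\tau)+\lambda\tau\|\betahat_\tau\|_1 \le \Loss_n(\betastar)+\lambda\tau\|\betastar\|_1$, and convexity identifies the induced Bregman term with the left side of the RSC bound. Bounding $|\inprod{\nabla\Loss_n(\betastar)}{\Delta}| \le \tfrac{\lambda\tau}{2}\|\Delta\|_1$ and using $\|\betastar\|_1 - \|\betahat_\tau\|_1 \le \|\Delta_S\|_1 - \|\Delta_{S^c}\|_1$ produces the cone condition $\|\Delta_{S^c}\|_1 \le 3\|\Delta_S\|_1$, which immediately yields the second claim $\|\Delta\|_1 \le 4\|\Delta_S\|_1 \le 4\sqrt{k}\,\|\Delta\|_2$. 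On this cone $\tfrac{\log p}{n}\|\Delta\|_1^2 \le 16k\tfrac{\log p}{n}\|\Delta\|_2^2$, so $n \succsim k\log p$ lets me absorb the tolerance term into half the curvature, leaving $\tfrac{\alpha}{2}\|\Delta\|_2^2 \le \tfrac{3}{2}\lambda\tau\sqrt{k}\,\|\Delta\|_2$ and hence $\|\Delta\|_2 \le C\tau\lambda\sqrt{k}$. A standard rescaling argument, using convexity to push the inequality back to a boundary point $\betastar + t\Delta$, certifies that $\Delta$ stays within the radius on which the RSC condition was established, closing the loop.
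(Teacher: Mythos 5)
Your proposal is correct and follows essentially the same route as the paper: the same symmetry-plus-Hoeffding bound giving $\|\nabla \Loss_n(\betastar)\|_\infty \precsim \tau b' \sqrt{\log p / n}$, the same truncation to the region where the Huber loss is quadratic with Chebyshev supplying a constant curvature factor (this is exactly where $\tau \ge 3\sigmastar$ enters), a restricted-eigenvalue deviation bound for the weighted design, and the same basic-inequality/cone-condition assembly under $n \succsim k \log p$. Two details differ only cosmetically: (i) you state RSC in gradient form, which should be paired with the first-order optimality condition rather than the basic inequality (the paper uses the function-value/Bregman form, and works on the event $|\epsilon_i| \le \tau/2$ rather than $|\epsilon_i| < \tau$, so that the residual $(x_i^T\Delta - \epsilon_i)w(x_i)$ stays inside the quadratic region after adding the $x_i^T\Delta$ term); and (ii) you localize via the standard rescaling argument, whereas the paper instead analyzes a side-constrained estimator over $\|\beta - \betastar\|_2 \le \tau/(2b')$ and then shows the optimum is interior---both devices are standard and interchangeable.
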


Importantly, the theoretically optimal choice of $\lambda$ from Theorem~\ref{ThmHubOutput}, which is $2c_0 b' \sqrt{\frac{\log p}{n}}$, depends only on the \emph{known} parameter $b'$ and and a universal constant $c_0$. This is in contrast to the usual Lasso, which requires the tuning parameter $\lambda$ to be proportional to the unknown error variance $\Var(\epsilon_i)$. When $\lambda = 2c_0 b' \sqrt{\frac{\log p}{n}}$, we have the error bounds
\begin{equation*}
\|\betahat_\tau - \betastar\|_2 \le C' \tau \sqrt{\frac{k \log p}{n}}, \qquad
\|\betahat_\tau - \betastar\|_1 \le 4C' \tau k \sqrt{\frac{\log p}{n}}.
\end{equation*}

As revealed in the proof, the constant $C$ appearing in the bound of Theorem~\ref{ThmHubOutput} scales linearly with $b'$ and inversely with $\lambda_{\min}\left(\E\left[\frac{w^3(x_i)}{2} x_i x_i^T\right]\right)$. This illustrates the drawback of scaling the covariates too aggressively via the weight function $w(x)$. In particular, if it is known that the tails of covariates are well-behaved (e.g., the $x_i$'s are sub-Gaussian), one may eliminate the weight function and replace the factor by the larger quantity $\lambda_{\min}\left(\E[x_i x_i^T]\right)$.

We also comment on the requirement that $\tau \ge 3\sigma^*$. We will provide a method in the next subsection for adaptively choosing $\tau$ without prior knowledge of $\sigma^*$, with a guarantee that the estimator obtained from our procedure is at least as good as the estimator obtained by taking the theoretically optimal choice $\tau = 3\sigma^*$. However, suppose momentarily that we are able to set the Huber parameter $\tau$ equal to $3\sigma^*$, and consider for the sake of illustration that the $\epsilon_i$'s are drawn from a mixture distribution $(1-\zeta) F + \zeta G$, where $F_1$ and $G$ are both zero-mean sub-Gaussian distributions with sub-Gaussian parameters $\sigma_F^2$ and $\sigma_G^2$, respectively, and $\zeta$ is the mixing probability. Standard results on sub-Gaussian distributions imply that the mixture distribution is also sub-Gaussian, with parameter bounded by $\sigma_G^2$. Thus, Lasso theory implies that $\|\betahat_{\text{Lasso}} - \betastar\|_2 \precsim \sigma_G \sqrt{\frac{k \log p}{n}}$. On the other hand, the variance of the mixture distribution is a weighted combination of the variances of $F$ and $G$, hence is bounded by a constant multiple of $p \sigma_F^2 + (1-p) \sigma_G^2$. If $p$ is close to 1, this translates into an $\ell_2$-error bound of approximately $\|\betahat - \betastar\|_2 \precsim \sigma_F \sqrt{\frac{k \log p}{n}}$ when using an $\ell_1$-penalized Huber loss. If $\sigma_F \ll \sigma_G$, this can lead to significant gains in the estimation error.


\subsection{Lepski's method}

We now discuss Lepski's method~\cite{Lep91, Bir01}. Consider $\sigma_{\min}$ and $\sigma_{\max}$ such that $\sigma_{\min} \le \sigmastar \le \sigma_{\max}$. Let $\sigma_j = \sigma_{\min} 2^j$, and define
\begin{equation*}
\scriptJ = \{j \ge 1: \sigma_{\min} \le \sigma_j < 2 \sigma_{\max}\}.
\end{equation*}
Note that $|\scriptJ| \le \log_2\left(\frac{2\sigma_{\max}}{\sigma_{\min}}\right)$.

Let $\betahat_{(j)}$ denote the output of the regression procedure with $\tau = 3\sigma_j$, and define
\begin{multline}
\label{EqnLepJ}
j_* = \min\Bigg\{j \in \scriptJ: \; \forall i > j \text{ s.t. } i \in \scriptJ, \; \|\betahat_{(i)} - \betahat_{(j)}\|_2 \le 6C \sigma_i \sqrt{\frac{k \log p}{n}} \\
\text{and } \|\betahat_{(i)} - \betahat_{(j)}\|_1 \le 24 C \sigma_i k \sqrt{\frac{\log p}{n}} \Bigg\}.
\end{multline}
(We define $j_* = \infty$ if no such indices exist, but we will show that $j_* < \infty$, w.h.p.)
Thus, to compute $j_*$, we perform pairwise comparisons of regression estimates obtained over the gridding of the interval $[\sigma_{\min}, \sigma_{\max}]$.

Note that if our goal were simply to obtain $\ell_2$-consistency, we could apply Lepski's method where $j_*$ is simply defined with respect to comparisons involving the $\ell_2$-error. However, we will need $\ell_1$-error bounds for the one-step derivations later, so we include both deviations in the screening process here. We then have the following result:
\begin{thm*}
\label{ThmLepski}
Under the same conditions as Theorem~\ref{ThmHubOutput}, with probability at least
\begin{equation*}
1 - \log_2\left(\frac{4\sigma_{\max}}{\sigma_{\min}}\right) c \exp(-c'n),
\end{equation*}
we have
\begin{align*}
\|\betahat_{(j_*)} - \betastar\|_2 \le 18\sigmastar \sqrt{\frac{k \log p}{n}}, \qquad
\|\betahat_{(j_*)} - \betastar\|_1 \le 72 \sigmastar k \sqrt{\frac{\log p}{n}}.
\end{align*}
\end{thm*}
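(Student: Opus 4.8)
The plan is to run the standard Lepski argument with the oracle index taken to be the smallest grid point that over-smooths. I would set $\jbar = \min\{j \in \scriptJ : \sigma_j \ge \sigmastar\}$; since the grid doubles and $\sigma_{\min} \le \sigmastar \le \sigma_{\max}$, this index is well defined, lies in $\scriptJ$, and satisfies $\sigmastar \le \sigma_{\jbar} < 2\sigmastar$ (its predecessor, if any, undercuts $\sigmastar$). For every $j \ge \jbar$ in $\scriptJ$ we have $\tau = 3\sigma_j \ge 3\sigmastar$, so Theorem~\ref{ThmHubOutput} applies; a union bound over the at most $|\scriptJ| \le \log_2(2\sigma_{\max}/\sigma_{\min})$ such indices then shows that, with probability at least $1 - \log_2(4\sigma_{\max}/\sigma_{\min})\, c\exp(-c'n)$, the individual bounds $\|\betahat_{(j)} - \betastar\|_2 \le 3C\sigma_j\sqrt{k\log p/n}$ and $\|\betahat_{(j)} - \betastar\|_1 \le 12C\sigma_j k \sqrt{\log p/n}$ hold simultaneously for all $j \ge \jbar$. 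I would condition on this event throughout.

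The first step is to show $j_* \le \jbar$, which in particular gives $j_* < \infty$. For any pair $i > j \ge \jbar$ in $\scriptJ$, the triangle inequality together with the individual bounds and $\sigma_j < \sigma_i$ yields
\begin{equation*}
\|\betahat_{(i)} - \betahat_{(j)}\|_2 \le 3C\sigma_i\sqrt{\tfrac{k\log p}{n}} + 3C\sigma_j\sqrt{\tfrac{k\log p}{n}} \le 6C\sigma_i\sqrt{\tfrac{k\log p}{n}},
\end{equation*}
and analogously $\|\betahat_{(i)} - \betahat_{(j)}\|_1 \le 24C\sigma_i k\sqrt{\log p/n}$. These are exactly the two screening thresholds in \eqref{EqnLepJ}, so $\jbar$ belongs to the set defining $j_*$, whence $j_* \le \jbar$.

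It then remains to bound $\betahat_{(j_*)}$. If $j_* = \jbar$, the individual bound and $\sigma_{\jbar} < 2\sigmastar$ already give the claim. If $j_* < \jbar$, then $\jbar$ is an index of $\scriptJ$ strictly larger than $j_*$, so the defining membership of $j_*$ supplies $\|\betahat_{(j_*)} - \betahat_{(\jbar)}\|_2 \le 6C\sigma_{\jbar}\sqrt{k\log p/n}$; combining this with the Theorem~\ref{ThmHubOutput} bound on $\betahat_{(\jbar)}$ via the triangle inequality gives
\begin{equation*}
\|\betahat_{(j_*)} - \betastar\|_2 \le 6C\sigma_{\jbar}\sqrt{\tfrac{k\log p}{n}} + 3C\sigma_{\jbar}\sqrt{\tfrac{k\log p}{n}} = 9C\sigma_{\jbar}\sqrt{\tfrac{k\log p}{n}} < 18C\sigmastar\sqrt{\tfrac{k\log p}{n}},
\end{equation*}
using $\sigma_{\jbar} < 2\sigmastar$; the identical computation in $\ell_1$ produces $36C\sigma_{\jbar} < 72C\sigmastar$, reproducing the stated constants (with $C$ the constant from Theorem~\ref{ThmHubOutput}).

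The conceptual crux, and the only nonroutine point, is that Theorem~\ref{ThmHubOutput} delivers no error control for the under-smoothed indices $j < \jbar$, where $\tau = 3\sigma_j < 3\sigmastar$ and $\betahat_{(j)}$ may be arbitrarily poor; in particular, when $j_* < \jbar$ one cannot bound $\betahat_{(j_*)}$ directly. Lepski's device circumvents exactly this: membership of $j_*$ in the screening set forces agreement with the reliable oracle estimate $\betahat_{(\jbar)}$ in both norms, so the uncontrolled estimate is tamed by routing through $\jbar$. Everything else, namely the union bound, the doubling bracket $\sigmastar \le \sigma_{\jbar} < 2\sigmastar$, and the triangle-inequality arithmetic that regenerates the thresholds $6C, 24C$ and then the final constants $18C, 72C$, is routine.
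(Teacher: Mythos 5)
Your proposal is correct and follows essentially the same argument as the paper: the same oracle index $j' = \min\{j \in \scriptJ : \sigma_j \ge \sigmastar\}$, the same union bound over the grid using Theorem~\ref{ThmHubOutput}, and the same triangle-inequality arithmetic yielding $9C\sigma_{j'} \le 18C\sigmastar$ and $36C\sigma_{j'} \le 72C\sigmastar$. The only cosmetic difference is that you condition on the good event and argue deterministically, whereas the paper bounds $\mprob(j_* > j')$ directly by the failure probabilities; these are the same argument.
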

The proof follows from straightforward algebraic manipulations and is contained in Appendix~\ref{AppThmLepski}.


Note that Lepski's method does not correspond to a standard grid search over $\sigma$, which would be more reminiscent of the adaptive robust estimation procedures described in the introduction.
Indeed, for each candidate value of $\sigma$, we perform a type of guided comparison between different values of $\sigma$, rather than simply choosing the value of $\sigma$ that gives rise to the smallest value of some objective function. Furthermore, the output of a Lepski-type procedure does not necessarily correspond to the $\betahat$ arising from the ``optimal" choice of $\sigmastar$. Rather, we are guaranteed that the $\ell_1$- and $\ell_2$-error of our final estimate is comparable to the \emph{error of the estimator} generated using the optimal parameter. In contrast, the adaptive procedures appearing in robust statistics literature suggest a method for choosing the optimal $\sigma$ by minimizing an approximation of the variance of the estimator thus produced.


\subsection{Rough scale parameter bounds}

Our application of Lepski's method requires specifying choices of $\sigma_{\min}$ and $\sigma_{\max}$. We now describe how to select these values in a reasonable manner. By independence, we have
\begin{equation*}
\Var(y_i) = \Var(x_i^T \betastar) + \Var(\epsilon_i).
\end{equation*}
Hence, we have $(\sigmastar)^2 \le \Var(y_i)$, and we may select $\sigma_{\max}^2$ to be a rough estimate of $\Var(y_i)$.

Various estimators for population means exist that only involve weak distributional assumptions. For instance, the ``median of means" estimator takes as input $n$ i.i.d.\ observations $X_1, \dots, X_n$, and then computes the means $\{\muhat_j\}_{j=1}^K$ of the $K$ disjoint subsets of $N = \lfloor \frac{n}{K} \rfloor$ observations, for a parameter $k$. The overall estimate $\muhat_{MoM}$ is the median of the means $\{\muhat_j\}_{j=1}^K$. We have the following guarantee:
\begin{lem*}
\label{LemMeanConc}
[Lemma 2 of Bubeck et al.~\cite{BubEtal13}]
Let $0 < \delta < 1$ and $0 < \epsilon \le 1$, and $n \ge 16 \log\left(\frac{1}{\delta}\right) + 2$. Suppose $\E[X_i] = \mu$ and $\E\left[|X_i - \mu|^{1+\epsilon}\right] = v$. Let $K = \left\lfloor 8 \log\left(\frac{e^{1/8}}{\delta}\right) \wedge \frac{n}{2} \right\rfloor$. Then with probability at least $1 - \delta$,
\begin{equation*}
|\muhat_{MoM} - \mu| \le (12v)^{\frac{1}{1+\epsilon}} \left(\frac{16 \log(e^{1/8}/\delta)}{n}\right)^{\frac{\epsilon}{1+\epsilon}}.
\end{equation*}
\end{lem*}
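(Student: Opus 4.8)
The plan is to follow the classical median-of-means argument, with one essential modification: since only a $(1+\epsilon)$-th absolute moment is assumed finite, the variance of $X_i$ may be infinite when $\epsilon < 1$, so the usual Chebyshev bound on a single block mean is unavailable. I would replace it by a Marcinkiewicz--Zygmund / von Bahr--Esseen moment inequality, which states that for independent, mean-zero summands $Y_1, \dots, Y_N$ and any exponent $r \in [1,2]$ one has $\E|\sum_i Y_i|^r \le 2\sum_i \E|Y_i|^r$. Applying this with $Y_i = X_i - \mu$ and $r = 1+\epsilon$ and dividing by $N^{1+\epsilon}$ gives the single-block bound $\E|\muhat_j - \mu|^{1+\epsilon} \le 2v/N^{\epsilon}$, where $N = \lfloor n/K\rfloor$ is the block size.

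First I would convert this moment bound into a tail bound via Markov's inequality: $\mprob(|\muhat_j - \mu| > t) \le 2v/(N^\epsilon t^{1+\epsilon})$. Choosing the threshold $t = (12v)^{1/(1+\epsilon)} N^{-\epsilon/(1+\epsilon)}$ makes the right-hand side exactly $1/6$, so each block mean lies within $t$ of $\mu$ with probability at least $5/6$, independently across the $K$ disjoint blocks.

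Next I would run the standard median reduction. Writing $Z_j = \mathbf{1}\{|\muhat_j - \mu| > t\}$, the $Z_j$ are i.i.d.\ Bernoulli with success probability at most $1/6$, and the event $\{|\muhat_{MoM} - \mu| > t\}$ forces at least half of the block means to fall on a common side outside $(\mu - t, \mu + t)$, hence $\sum_j Z_j \ge K/2$. A Chernoff (or Hoeffding) bound for the binomial then yields $\mprob(\sum_j Z_j \ge K/2) \le \exp(-K/8)$, and the choice $K = \lfloor 8\log(e^{1/8}/\delta) \wedge n/2\rfloor$ guarantees $K \ge 8\log(1/\delta)$, so this probability is at most $\delta$. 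It remains only to rewrite the threshold $t$ in the stated form, which is where the constant $16$ and the factor $e^{1/8}$ enter: the hypothesis $n \ge 16\log(1/\delta)+2$ ensures $N = \lfloor n/K\rfloor \ge n/(16\log(e^{1/8}/\delta))$, and substituting this lower bound on $N$ into $t$ produces exactly the claimed inequality.

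I expect the only genuinely delicate step to be the passage from finite variance to a finite fractional moment, i.e., invoking von Bahr--Esseen in place of Chebyshev; after that, the argument is the routine two-stage ``boost a constant-probability guarantee to a high-probability guarantee'' scheme, and the remaining effort is purely the bookkeeping of universal constants (the $12$ in the threshold, the $1/6$ per-block failure probability, and the $8$, $16$, $e^{1/8}$ arising from the floor operations and the lower bound on $n$) so that they line up with the precise statement.
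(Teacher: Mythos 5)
Your proposal is correct, and it is essentially the argument behind the cited result: the paper itself states this lemma as an imported result from Bubeck et al.\ (Lemma 2 of~\cite{BubEtal13}) without reproving it, and the proof there is exactly your scheme of a von Bahr--Esseen moment bound on each block mean, Markov's inequality to get a constant (here $1/6$) per-block failure probability, and a Hoeffding/Chernoff bound on the median step, with the hypothesis $n \ge 16\log(1/\delta)+2$ and the factor $e^{1/8}$ absorbing the floor operations in $K$ and $N$. Your constant bookkeeping checks out ($2/12 = 1/6$ per block, $\exp(-2K/9) \le \exp(-K/8) \le \delta$, and $N \ge n/(2K)$ since $K \le n/2$), so nothing is missing.
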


In particular, we may take $\delta = e^{-\tilde{c} n}$, where $\tilde{c}$ is a function of $\epsilon, v$, and $\mu$, to ensure that $\mu \le 2\muhat_{MoM}$ with probability at least $1 - c \exp(-c'n)$. Consequently, we set $\sigma_{\max}^2 = 2\sigmahat_{MoM}^2$, where $\sigmahat_{MoM}^2$ is the median-of-means estimator computed from the dataset $\{y_i^2\}_{i=1}^n$. Assuming the existence of $(2+\epsilon)$-moments of $x_i$ and $\epsilon_i$, we are guaranteed that
\begin{equation*}
\left|\sigmahat_{MoM}^2 - \E[y_i^2]\right| \le \frac{\E[y_i^2]}{2},
\end{equation*}
with probability at least $1 - c \exp(-c'n)$, so
\begin{equation*}
\sigma_{\max}^2 = 2\sigmahat_{MoM}^2 \ge \E[y_i^2] \ge \Var(y_i) \ge \Var(\epsilon_i) = (\sigmastar)^2
\end{equation*}
and
\begin{equation*}
\sigma_{\max}^2 \le \frac{3}{2} \Var(y_i) = \frac{3}{2}\left((\betastar)^T \Sigma_x \betastar + (\sigmastar)^2\right).
\end{equation*}

We now turn to the problem of choosing $\sigma_{\min}$. Consider the choice $\sigma_{\min} = \frac{\sigma_{\max}}{2^M}$, for some integer $M$. Let $\betahat$ be the final output of Lepski's method. By Theorem~\ref{ThmLepski}, we have
\begin{equation*}
\|\betahat - \betastar\|_2 \le 18C\sigmastar \sqrt{\frac{k \log p}{n}},
\end{equation*}
with probability at least $1 - cM\exp(-c'n)$.
Thus, we may see the selection of $M$ as a tradeoff between computation and accuracy: A larger value of $M$ ensures an (exponentially) tighter bound on $\|\betahat - \betastar\|_2$, but the number of $\ell_1$-penalized Huber regression objectives to be optimized also increases linearly with $M$. We have the following result:
\begin{thm*}
\label{ThmAdap}
Suppose Lepski's method is performed on the $\ell_1$-penalized Huber problems with $\sigma_{\max}^2$ equal to the median-of-means estimator of $\Var(y_i)$ and $\sigma_{\min} = \frac{\sigma_{\max}}{2^M}$. If
\begin{equation}
\label{EqnWeird}
\frac{3}{2^{M/2 + 1}} \left((\betastar)^T \Sigma_x \betastar + (\sigmastar)^2\right) \le (\sigmastar)^2,
\end{equation}
we have
\begin{align}
\|\betahat - \betastar\|_2 & \le 18 C \sigmastar \sqrt{\frac{k \log p}{n}}, \qquad \text{and} \label{EqnEll2} \\
\|\betahat - \betastar\|_1 & \le 72 C\sigmastar k \sqrt{\frac{\log p}{n}}, \label{EqnEll1}
\end{align}
with probability at least $1-cM \exp(-c'n)$.
\end{thm*}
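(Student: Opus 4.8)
The plan is to reduce Theorem~\ref{ThmAdap} to Theorem~\ref{ThmLepski}: the two data-driven endpoints must be shown to bracket the true scale, i.e. $\sigma_{\min} \le \sigmastar \le \sigma_{\max}$ with high probability, so that the standing hypothesis of Theorem~\ref{ThmLepski} is in force for the grid $\scriptJ$. Once that bracketing holds, the conclusions \eqref{EqnEll2} and \eqref{EqnEll1} for the final output $\betahat = \betahat_{(j_*)}$ are literally the conclusions of Theorem~\ref{ThmLepski}, and the only remaining work is to account for the failure probability of the bracketing step together with that of the $O(M)$ constituent Huber regressions.

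First I would record the two median-of-means facts established in the preceding subsection. Let $\mathcal{A}$ be the event that $|\sigmahat_{MoM}^2 - \E[y_i^2]| \le \tfrac12 \E[y_i^2]$, which holds with probability at least $1 - c\exp(-c'n)$ by Lemma~\ref{LemMeanConc} applied to $\{y_i^2\}$ with $\delta = e^{-\tilde c n}$ under the $(2+\epsilon)$-moment assumption. Since $\E[y_i] = 0$ gives $\E[y_i^2] = \Var(y_i)$, on $\mathcal{A}$ we have simultaneously
\begin{equation*}
\sigma_{\max}^2 \ge \E[y_i^2] = \Var(y_i) \ge (\sigmastar)^2 \qquad\text{and}\qquad \sigma_{\max}^2 \le \tfrac32 \Var(y_i) = \tfrac32\big((\betastar)^T \Sigma_x \betastar + (\sigmastar)^2\big).
\end{equation*}
The first inequality immediately yields the right endpoint $\sigmastar \le \sigma_{\max}$. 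For the left endpoint I substitute $\sigma_{\min} = \sigma_{\max}/2^M$ into the upper bound and use $2^{-2M} \le 2^{-M/2}$ (valid for $M \ge 0$):
\begin{equation*}
\sigma_{\min}^2 = 2^{-2M}\sigma_{\max}^2 \le \tfrac32\,2^{-2M}\Var(y_i) \le \tfrac{3}{2^{M/2+1}}\big((\betastar)^T \Sigma_x \betastar + (\sigmastar)^2\big) \le (\sigmastar)^2,
\end{equation*}
where the last step is exactly hypothesis \eqref{EqnWeird}. Hence $\sigma_{\min} \le \sigmastar \le \sigma_{\max}$ on $\mathcal{A}$, which is precisely what Theorem~\ref{ThmLepski} requires.

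It then remains to invoke Theorem~\ref{ThmLepski} and track probabilities. The crucial bookkeeping point is that the ratio $\sigma_{\max}/\sigma_{\min} = 2^M$ is fixed by construction, independent of the realized value of $\sigma_{\max}$; consequently $|\scriptJ| \le M+1$ and $\log_2(4\sigma_{\max}/\sigma_{\min}) = M+2$ deterministically. Applying Theorem~\ref{ThmLepski} on $\mathcal{A}$ therefore produces the bounds \eqref{EqnEll2}--\eqref{EqnEll1} except on an event of probability at most $(M+2)\,c\exp(-c'n)$, arising from the union bound over the at most $M+1$ invocations of Theorem~\ref{ThmHubOutput} that underlie the proof of Theorem~\ref{ThmLepski}. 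A final union bound with $\mprob(\mathcal{A}^c) \le c\exp(-c'n)$ gives total failure probability at most $cM\exp(-c'n)$, as claimed.

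The step I expect to require the most care is the data-dependence between the scale estimate and the regressions: $\sigma_{\max}$, and hence the entire grid $\{\sigma_j\}$ and the Huber parameters $\tau = 3\sigma_j$, are functions of the same sample used to compute each $\betahat_{(j)}$, so $\mathcal{A}$ and the Huber concentration events are not independent. I would resolve this by working throughout on $\mathcal{A}$ and exploiting the fact that the \emph{number} of objectives entering the union bound is the deterministic quantity $M+1$ (a function of $M$ alone, not of the data), so the cardinality of the union bound is never inflated by the randomness of the grid; if strict independence were desired, a one-time sample split reserving a fraction of the observations for the scale estimate would pin down $\sigma_{\max}$ and make the grid deterministic conditionally, at no cost to the rates. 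Everything else is the triangle-inequality manipulation already encapsulated in Theorem~\ref{ThmLepski}, so no new concentration estimates are needed.
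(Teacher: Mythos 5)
Your proposal is correct and takes essentially the same route as the paper: the paper proves Theorem~\ref{ThmAdap} implicitly, by combining the median-of-means bounds $\sigmastar \le \sigma_{\max}$ and $\sigma_{\max}^2 \le \frac{3}{2}\Var(y_i)$ with condition~\eqref{EqnWeird} to ensure $\sigma_{\min} \le \sigmastar \le \sigma_{\max}$, and then invoking Theorem~\ref{ThmLepski} with $|\scriptJ| = \order(M)$ to obtain the error bounds~\eqref{EqnEll2}--\eqref{EqnEll1} and the $1 - cM\exp(-c'n)$ failure probability via a union bound, exactly as you do. Your closing remark about the data-dependence of the grid (and the optional sample-splitting fix) flags a subtlety the paper passes over silently, but it does not alter the argument.
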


Note that if $M = o(c'n)$, the bounds~\eqref{EqnEll2} and~\eqref{EqnEll1} in Theorem~\ref{ThmAdap} hold w.h.p. In particular, note that if $M$ grows with $n$ and $\lambda_{\max}(\Sigma_x)$ and $\|\betastar\|_2$ are bounded, then inequality~\eqref{EqnWeird} holds for all sufficiently large $n$. Note also that some knowledge of the curvature of the covariate distribution (i.e., maximum eigenvalue of $\Sigma_x$) can be helpful in determining the choice of $M$ necessary for inequality~\eqref{EqnWeird} to be satisfied.

\subsection{Practical considerations}

In practice, we need to have an explicit value for $C$ in order to apply Lepski's method. As noted earlier, the constant $C$ appearing in our bounds depends on universal constants, the choice $\tau$ of the parameter used for the robust loss function, and distributional properties of $x_i$. The last point is somewhat unsatisfactory. However, in practical applications, we might imagine having numerous samples of the $x_i$'s available, from which we might be able to estimate a lower bound on $\lambda_{\min}\left(\E\left[\frac{w^3(x_i)}{2} x_i x_i^T\right]\right)$. 
Importantly, we emphasize that our proposed method does \emph{not} require any information about the distribution of the $\epsilon_i$'s, which we would not be accessible without a good initial estimate of $\betastar$ in practice.



\section{One-step estimators}
\label{SecEfficient}

Although we have established the consistency of our estimators under rather weak distributional assumptions on $x_i$ and $\epsilon_i$, it is well-known that the presence of the weight function $w(x)$ leads to poor efficiency. We now address the problem of improving efficiency by studying one-step modifications of the estimators proposed in the previous section.

\subsection{Main results}
\label{SecNormality}

The theory of $M$-estimation from classical robust statistics recommends one-step estimators for improved efficiency \cite{Rou84, JurPor87, SimEtal92, GerYoh02}.
Recent results in high-dimensional inference have led to theoretical derivations based on similar types of one-step estimators, which we analyze here.

Consider a differentiable score function $\psi$, and let $A(\psi) = \E[\psi'(\epsilon_i)]$. Also define the empirical estimate $\Ahat(\psi) = \frac{1}{n} \sum_{i=1}^n \psi'(y_i - x_i^T \betahat)$. Following Bickel~\cite{Bic75}, we then define the one-step estimator
\begin{equation*}
\bhat = \betahat + \frac{\Thetahat}{\Ahat(\psi)} \cdot \frac{1}{n} \sum_{i=1}^n \psi(y_i - x_i^T \betahat) x_i,
\end{equation*}
where $\Thetahat$ is a suitable estimate of $\Theta_x = \Sigma_x^{-1}$, to be described in the sequel.

Since the scale is unknown, we will plug an estimate $\sigmahat$ into the score function $\psi_\sigma(t) = \psi\left(\frac{t}{\sigma}\right)$ and use the corresponding one-step estimator
\begin{align}
\label{EqnOneStepPsi}
\bhat_\psi & := \betahat + \frac{\Thetahat}{\Ahat(\psi_{\sigmahat})} \cdot \frac{1}{n} \sum_{i=1}^n \psi_{\sigmahat}(y_i - x_i^T \betahat) x_i \notag \\
& = \betahat + \frac{\Thetahat}{\Ahat(\psi_{\sigmahat})} \cdot \frac{1}{n} \sum_{i=1}^n \psi\left(\frac{y_i - x_i^T \betahat}{\sigmahat}\right) x_i,
\end{align}
where
\begin{equation}
\label{EqnAhat}
\Ahat(\psi_{\sigmahat}) = \frac{1}{n} \sum_{i=1}^n \psi'_{\sigmahat}(y_i - x_i^T \betahat) = \frac{1}{n} \sum_{i=1}^n \frac{1}{\sigmahat} \psi'\left(\frac{y_i - x_i^T \betahat}{\sigmahat}\right),
\end{equation}
and the scale estimate $\sigmahat$ is obtained from the consistent regression parameter estimate via $\sigmahat = \frac{1}{n} \sum_{i=1}^n (y_i - x_i^T \betahat_{\Lep})^2$. For ease of notation, we will redefine the term $\Ahat(\psi)$ to be equal to the expression~\eqref{EqnAhat}, and let $A(\psi) = \E\left[\frac{1}{\sigmastar}\psi'\left(\frac{\epsilon_i}{\sigmastar}\right)\right]$.

\begin{example*}
The choice $\psi = \frac{-f'}{f}$, where $f$ is the (twice-differentiable) density of $\frac{\epsilon_i}{\sigmastar}$, will play a prominent role in our analysis. This corresponds to the derivative of the negative log likelihood function. In the case when $\frac{\epsilon_i}{\sigmastar} \sim N(0, 1)$, we then have $\psi_\sigma(t) = t$ and $\psi_\sigma'(t) = 1$, in which case
\begin{equation*}
\bhat_\psi = \betahat + \frac{\Thetahat X^T (y - X\betahat)}{n},
\end{equation*}
which is the ``debiased Lasso" \cite{vanEtal14, JavMon14JMLR, CaiGuo17, JanVan18, YuEtal18}. However, in that line of work, $\betahat$ is always taken to be the output of the usual MLE-based objective, whereas we take $\betahat$ to be a more general robust high-dimensional estimator with guaranteed statistical consistency properties even when the $\epsilon_i$'s are non-sub-Gaussian.
\end{example*}

We now discuss how to obtain a suitable estimate $\Thetahat$ of $\Theta_x$. Note that Bickel~\cite{Bic75} proposes to use $\Thetahat = \Sigmahat^{-1}$, where $\Sigmahat = \frac{X^TX}{n}$; however, when $p > n$, the matrix $\Sigmahat$ is not invertible. We instead choose $\Thetahat$ to be the graphical Lasso estimator \cite{YuaLin07, FriEtal08}, obtained by solving the following convex optimization program:
\begin{equation}
\label{EqnGLasso}
\Thetahat \in \arg\min_{\Theta \succeq 0} \left\{\tr\left(\Theta^T \Sigmahat\right) - \log\det(\Theta) + \lambda \sum_{i \neq j} |\Theta_{ij}|\right\}.
\end{equation}


We now derive the limiting distribution of the one-step estimator. Our arguments involve Taylor expansions of the function $\psi$, so for simplicity, we assume that $\psi$ is thrice-differentiable. Extensions to cases where $\psi$ does not satisfy the differentiability criterion (e.g., corresponding to the Huber loss function) may be derived via more careful arguments, but we omit the details here. Our main result is the following:
\begin{thm*}
\label{ThmAsympNorm}
Suppose $n \succsim k^2 \log^3 p$ and
\begin{align}
|\Ahat(\psi) - A(\psi)| & = \order\left(\frac{k \log p}{\sqrt{n}}\right), \label{EqnACond} \\
\opnorm{\Thetahat - \Thetastar}_1 & = \order\left(k \sqrt{\frac{\log p}{n}}\right) \label{EqnThetaCond}.
\end{align}
Let $P_J$ denote the projection onto any set of $m = |J|$ coordinates of fixed dimension, and suppose we have the convergence in distribution
\begin{equation}
\label{EqnConvDist}
P_J \cdot \frac{\Theta_x}{A(\psi)} \cdot \frac{1}{\sqrt{n}} \sum_{i=1}^n \psi\left(\frac{\epsilon_i}{\sigmastar}\right) x_i \stackrel{d}{\longrightarrow} N\left(0, \; \frac{\E[\psi^2(\epsilon_i/\sigmastar)]}{A^2(\psi)} \cdot (\Theta_x)_{JJ} \right).
\end{equation}
Then the one-step estimator satisfies
\begin{equation*}
\sqrt{n} P_J(\bhat_{\psi} - \betastar) \stackrel{d}{\longrightarrow} N\left(0, \; \frac{\E[\psi^2(\epsilon_i/\sigmastar)]}{A^2(\psi)} \cdot (\Theta_x)_{JJ} \right).
\end{equation*}
\end{thm*}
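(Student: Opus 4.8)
The plan is to run a Bickel-type one-step expansion of~\eqref{EqnOneStepPsi} and reduce $\sqrt{n}\,P_J(\bhat_\psi-\betastar)$ to the linear statistic appearing in~\eqref{EqnConvDist} plus a remainder that is $o_P(1)$ after projection; the conclusion then follows from~\eqref{EqnConvDist} and Slutsky's theorem. Setting $\Delta:=\betahat-\betastar$ and using $y_i-x_i^T\betahat=\epsilon_i-x_i^T\Delta$, I would Taylor-expand $\psi\big((\epsilon_i-x_i^T\Delta)/\sigmahat\big)$ to second order about $\epsilon_i/\sigmahat$. Writing $\Mhat:=\frac1n\sum_i \sigmahat^{-1}\psi'(\epsilon_i/\sigmahat)\,x_ix_i^T$ and collecting the quadratic terms into $R:=\frac1{2n}\sum_i\psi''(\zeta_i)\,\sigmahat^{-2}(x_i^T\Delta)^2 x_i$, this yields the decomposition
\begin{equation*}
\bhat_\psi-\betastar=\left(I-\frac{\Thetahat\Mhat}{\Ahat}\right)\Delta+\frac{\Thetahat}{\Ahat}\cdot\frac1n\sum_{i=1}^n\psi\left(\frac{\epsilon_i}{\sigmahat}\right)x_i+\frac{\Thetahat}{\Ahat}\,R,
\end{equation*}
whose three summands are a bias term, the leading stochastic term, and a higher-order remainder (recall $m=|J|$ is fixed).

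For the leading term I would first replace $\Thetahat,\Ahat$ by $\Theta_x,A(\psi)$ and then $\sigmahat$ by $\sigmastar$ inside the score. Writing $T_0:=\frac1n\sum_i\psi(\epsilon_i/\sigmahat)x_i$, the two replacement errors are $\tfrac1A(\Thetahat-\Theta_x)T_0$ and $\tfrac{\Ahat-A}{A^2}\Theta_x T_0$; since $T_0$ is mean-zero with bounded score, $\|T_0\|_\infty=O_P(\sqrt{\log p/n})$, and combining this with~\eqref{EqnThetaCond},~\eqref{EqnACond} and the Hölder bound $\|P_J Mv\|_2\le\sqrt m\,\opnorm{M}_\infty\|v\|_\infty$ controls them at order $O_P(k\log p/\sqrt n)$ and $O_P(\sqrt{k^2\log^3 p/n})$, both $o_P(n^{-1/2})$ precisely when $n\succsim k^2\log^3 p$. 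Passing from $\psi(\epsilon_i/\sigmahat)$ to $\psi(\epsilon_i/\sigmastar)$ uses a further expansion in the scale whose leading coefficient, up to constants, is $\frac1n\sum_i\epsilon_i\psi'(\epsilon_i/\sigmastar)x_i$; this has mean zero by symmetry of $\epsilon_i$ together with $x_i\condind\epsilon_i$ and $\E[x_i]=0$, so multiplied by the consistent rate $|\sigmahat-\sigmastar|$ (controlled through Theorem~\ref{ThmAdap} and concentration of $\frac1n\sum_i\epsilon_i^2$) it is negligible. After these substitutions, \eqref{EqnConvDist} applies verbatim.

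The delicate point is $\sqrt n\,P_J(I-\Thetahat\Mhat/\Ahat)\Delta=o_P(1)$. A naive centering of $\Mhat$ at $A(\psi)\Sigma_x$ and $\Ahat$ at $A(\psi)$ is lossy: it leaves a term $\sim\tfrac{\Ahat-A}{A}\Delta$ of crude size $\sqrt n\,(k\log p/\sqrt n)\|\Delta\|_2$, too large under the stated conditions. The key observation is that the scale mis-specification enters both $\Mhat$ and $\Ahat$ only through the common factor $g(\sigmahat):=\E[\sigmahat^{-1}\psi'(\epsilon_i/\sigmahat)\mid\sigmahat]$—indeed $\E[\Mhat]=g\Sigma_x$ and, to leading order, $\E[\Ahat]=g$, using $x_i\condind\epsilon_i$ and $\E[x_i]=0$—so it cancels in the ratio and drives $\Thetahat\Mhat/\Ahat$ to $\Theta_x\Sigma_x=I$ \emph{regardless} of how far $\sigmahat$ is from $\sigmastar$. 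Expanding $I-\Thetahat\Mhat/\Ahat$ to first order then leaves only the mean-zero fluctuations $\Mhat-g\Sigma_x$, $\Ahat-g$, and the error $\Thetahat-\Theta_x$; bounding these against $\Delta$ by $\ell_1/\ell_\infty$ duality, with $\|\Delta\|_1=\order(\sigmastar k\sqrt{\log p/n})$ and $\|\Delta\|_2=\order(\sigmastar\sqrt{k\log p/n})$ from Theorem~\ref{ThmAdap}, the max-norm concentration $\|\Mhat-g\Sigma_x\|_{\max}=O_P(\sqrt{\log p/n})$, and~\eqref{EqnThetaCond}, yields an $o_P(n^{-1/2})$ bound. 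Making the conditional-mean identities rigorous—by conditioning on the auxiliary sample defining $\betahat_{\Lep}$ and $\sigmahat$, or via a uniform-in-$\Delta$ empirical-process argument—is the technical core of this step.

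I expect the genuine bottleneck to be the quadratic remainder $\sqrt n\,P_J\tfrac{\Thetahat}{\Ahat}R$. Controlling $R$ requires a uniform-over-coordinates bound on $\frac1n\sum_i(x_i^T\Delta)^2 x_{ij}$; since these summands are only sub-exponential under the weak moment assumptions on $x_i$, the concentration step picks up an extra logarithmic factor, giving $\|R\|_\infty=O_P(\|\Delta\|_2^2\sqrt{\log p})=O_P(k\log^{3/2}p/n)$ and hence $\sqrt n\,\|P_J\Theta_x R\|_2/A=O_P(k\log^{3/2}p/\sqrt n)$. This term, together with the $\Ahat$-replacement term above, is exactly what forces $n\succsim k^2\log^3 p$, and carrying it out uniformly while simultaneously tracking the scale plug-in is the main obstacle. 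Once all three remainders are shown to be $o_P(n^{-1/2})$ after applying $P_J$, Slutsky's theorem combined with~\eqref{EqnConvDist} delivers the stated asymptotic normality.
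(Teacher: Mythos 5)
Your proposal reproduces the skeleton of the paper's proof in Appendix~\ref{AppThmAsympNorm} exactly: the same three-term decomposition into leading linear statistic, bias term $\bigl(I - \Thetahat\Mhat/\Ahat(\psi)\bigr)\Delta$ (writing $\Delta := \betahat - \betastar$), and quadratic Taylor remainder; the same H\"{o}lder treatment of the leading term via $\opnorm{\Thetahat/\Ahat(\psi) - \Theta_x/A(\psi)}_1 = \order(k\log p/\sqrt{n})$ against an $\order(\sqrt{\log p})$ sup-norm; and the same $k\sqrt{\log^3 p/n}$ accounting of the remainder (you obtain the extra $\sqrt{\log p}$ from sub-exponential concentration, the paper from $\|X\|_{\max} = \order(\sqrt{\log np})$ in Lemma~\ref{LemXmax}; the rates agree, and both force $n \succsim k^2\log^3 p$). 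The genuine divergence is the bias term. The paper centers at $\sigmastar A(\psi)\Sigmastar$ and bounds $A_1$ by the three-factor product $\opnorm{\tfrac{\Thetastar}{\sigmastar A(\psi)}-\tfrac{\Thetahat}{\sigmahat \Ahat(\psi)}}_1\cdot\|\sigmastar A(\psi)\Sigmastar - \tfrac{1}{n}\sum_i \psi'(\epsilon_i/\sigmastar)x_ix_i^T\|_{\max}\cdot\|\Delta\|_1$; that product controls only the double-difference term in the expansion of the bias matrix and drops the cross terms, one of which is precisely the scalar term $\bigl(1 - \tfrac{\sigmastar A(\psi)}{\sigmahat\Ahat(\psi)}\bigr)\Delta$ that you flag as fatal for naive centering: its size is $\sqrt{n}\cdot\tfrac{k\log p}{\sqrt n}\cdot\|\Delta\|_2 \asymp \sqrt{k^3\log^3 p/n}$, not $o_{\mprob}(1)$ at the stated scaling. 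Your common-factor cancellation---centering both $\Mhat$ and $\Ahat(\psi)$ at $g(\sigmahat) := \E[\sigmahat^{-1}\psi'(\epsilon_i/\sigmahat)\mid\sigmahat]$ so the scale error cancels in the ratio---is therefore a genuinely different route, and it attacks exactly the step where the paper's own argument is loosest; what it buys is insensitivity of the bias term to $|\sigmahat-\sigmastar|$, which is the correct mechanism here.

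That said, two concrete gaps stand between your sketch and a proof. First (you flag this yourself): there is no sample splitting in the paper, so $\sigmahat$, $\betahat$, and the averages defining $\Mhat$, $\Ahat(\psi)$, and your $T_0$ are built from the same data, and the conditional-mean identities $\E[\Mhat\mid\sigmahat] = g(\sigmahat)\Sigma_x$ and $\E[\Ahat(\psi)\mid\sigmahat]\approx g(\sigmahat)$ (as well as "$T_0$ is mean-zero") are not literally available; you would need uniform-in-$\sigma$ concentration of $\tfrac1n\sum_i\sigma^{-1}\psi'(\epsilon_i/\sigma)x_ix_i^T$ and of the score average over a neighborhood of $\sigmastar$, together with Lipschitz continuity in $\sigma$---doable, but real work not present in the sketch. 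Second, and unacknowledged: the fluctuation $g\,(\Theta_x-\Thetahat)\Sigma_x\Delta$ arising from $I - \Thetahat\Sigma_x = (\Theta_x - \Thetahat)\Sigma_x$ is \emph{not} controlled by condition~\eqref{EqnThetaCond} and $\ell_1/\ell_\infty$ duality alone: the best such bound is $\opnorm{\Thetahat-\Theta_x}_1\,\|\Sigma_x\Delta\|_\infty \precsim k\sqrt{\log p/n}\cdot\|\Delta\|_2$, and $\sqrt n$ times this is of order $k^{3/2}\log p/\sqrt n \asymp \sqrt{k/\log p}$ under $n \asymp k^2\log^3 p$, which is $o(1)$ only when $k \precsim \log p$. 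Closing this term requires the elementwise guarantee $\|\Thetahat - \Thetastar\|_{\max} = \order(\sqrt{\log p/n})$ from Lemma~\ref{LemGLasso}, paired with $\|\Sigma_x\Delta\|_1 \le \opnorm{\Sigmastar}_1\|\Delta\|_1 = \order(k\sqrt{\log p/n})$, which renders it $\order(k\log p/n)$ and hence negligible; this ingredient must be imported as an additional hypothesis (or from the graphical-Lasso analysis), not derived from~\eqref{EqnThetaCond}.
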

The proof of Theorem~\ref{ThmAsympNorm} is contained in Appendix~\ref{AppThmAsympNorm}.

Altogether, we conclude that the limiting distribution of the high-dimensional estimator, restricted to $m$ coordinates, agrees with the result of Bickel~\cite{Bic75} for low-dimensional robust $M$-estimators.



\subsection{Probabilistic conditions}
\label{SecProbCond}

We now present some distributional assumptions under which the conditions~\eqref{EqnACond}, \eqref{EqnThetaCond}, and~\eqref{EqnConvDist} hold w.h.p. Importantly, we need stronger assumptions to guarantee asymptotic normality of the one-step estimator than the conditions we have imposed so far to prove consistency. We make the following assumptions on the distribution of the covariates:
\begin{assumption} Assume that
\begin{itemize}
\item[(A1)] $\min_{1 \le j, k \le p} \Var(x_{ij} x_{ik}) \ge c_1$,
\item[(A2)] $\max_{1 \le j, k \le p} \E\left[\exp\left(\frac{|x_{ij} x_{ik} - (\Sigma_x)_{jk}|}{C_1}\right)\right] \le 2$,
\item[(A3)] $\max_{1 \le j, k \le p} \Var(x_{ij} x_{ik}) \le \sigma^2_{xx}$,
\item[(A4)] $\max_{1 \le j \le p} \E\left[\left(e_j^T \Theta_x x_i\right)^3\right] < \infty$.
\end{itemize}
\end{assumption}
As discussed in Chernuzhukov et al.~\cite{CheEtal13}, these conditions are certainly satisfied when the $x_{ij}$'s are sub-Gaussian (hence, $x_{ij} x_{ik}$ is sub-exponential), but the conditions are designed to hold for broader classes of distributions, as well. In particular, we essentially only require a bound on the mgf of the $x_{ij}$'s at one point. We also assume that the following assumptions on the additive errors, where for simplicity, we reuse the constant $C_1$ appearing in (A2):
\begin{assumption} Assume that
\begin{itemize}
\item[(B1)] $\E[\epsilon_i^2] < \infty$, $\E\left[\exp\left(\frac{\epsilon_i^2}{C_1}\right)\right] \le 2$,
\item[(B2)] $\E\left[\psi^3\left(\frac{\epsilon_i}{\sigmastar}\right)\right] < \infty$, $\E\left[\exp\left(\frac{\psi^2(\epsilon_i/\sigmastar)}{C_1}\right)\right]  \le 2$, $\E\left[\exp\left(\frac{\left(\psi''(\epsilon_i/\sigmastar)\right)^2}{C_1}\right)\right]  \le 2$,
\item[(B3)] $\E\left[\exp\left(\frac{\psi^2(\epsilon_i/\sigmastar)\left(\psi'(\epsilon_i/\sigmastar)\right)^2}{C_1}\right)\right]  \le 2$, $\E\left[\exp\left(\left(\frac{\epsilon_i}{\sigmastar}\right)^2\psi^2\left(\frac{\epsilon_i}{\sigmastar}\right)/C_1\right)\right] \le 2$.
\end{itemize}
\end{assumption}
The first set of results on consistency will only require assumptions (B1)--(B2), whereas our results on constructing confidence intervals will also require (B3). Note that (B3) is relatively stronger, since (B2) allows the heaviness of the tails of $\epsilon_i$ to be alleviated using a well-behaved $\psi$ function (for instance, if $\psi$ is bounded, assumption (B2) will be trivially satisfied).

We have the following result, proved in Appendix~\ref{AppLemApsi}:
\begin{lem*}
\label{LemApsi}
Suppose assumptions (A1)--(A4) and (B1)--(B2) hold, and suppose $\|\psi'\|_\infty, \|\psi^{(3)}\|_\infty < \infty$. Then conditions~\eqref{EqnACond} and~\eqref{EqnConvDist} hold,
with probability at least $1 - c\exp(-c'n)$.
\end{lem*}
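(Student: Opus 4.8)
The plan is to treat the two conclusions separately, since the convergence~\eqref{EqnConvDist} involves only the population pair $(\epsilon_i, x_i)$ and is therefore a classical multivariate central limit theorem, whereas~\eqref{EqnACond} is a genuine plug-in estimation bound and constitutes the real work. I would dispatch~\eqref{EqnConvDist} first. Writing $W_i := \psi(\epsilon_i/\sigmastar)\, x_i \in \real^p$, the summands are i.i.d.; since $\E[x_i] = 0$ and $x_i \condind \epsilon_i$, we get $\E[W_i] = \E[\psi(\epsilon_i/\sigmastar)]\,\E[x_i] = 0$, and by the same independence $\E[W_i W_i^T] = \E[\psi^2(\epsilon_i/\sigmastar)]\,\Sigma_x$. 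Consequently the full $p$-dimensional vector $\frac{\Theta_x}{A(\psi)}\cdot\frac{1}{\sqrt n}\sum_i W_i$ has covariance $\frac{\E[\psi^2(\epsilon_i/\sigmastar)]}{A^2(\psi)}\,\Theta_x \Sigma_x \Theta_x = \frac{\E[\psi^2(\epsilon_i/\sigmastar)]}{A^2(\psi)}\,\Theta_x$, using $\Theta_x = \Sigma_x^{-1}$. Applying $P_J$ restricts to the $m$ fixed coordinates and yields exactly the stated limiting covariance $\frac{\E[\psi^2(\epsilon_i/\sigmastar)]}{A^2(\psi)}(\Theta_x)_{JJ}$. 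Because the projected summands $P_J \Theta_x W_i$ form an i.i.d.\ mean-zero sequence in the fixed dimension $m$ with finite second moments (finite by (A4) and (B2)), the ordinary multivariate Lindeberg--L\'evy theorem, applied through the Cram\'er--Wold device, delivers~\eqref{EqnConvDist}.

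For~\eqref{EqnACond} I would write $\delta := \betahat - \betastar$ and $r_i := y_i - x_i^T\betahat = \epsilon_i - x_i^T\delta$, and decompose
\[
\Ahat(\psi) - A(\psi) = \underbrace{\Big(\Ahat(\psi) - \tfrac1n\textstyle\sum_i \tfrac1{\sigmastar}\psi'(\epsilon_i/\sigmastar)\Big)}_{\text{plug-in error}} + \underbrace{\Big(\tfrac1n\textstyle\sum_i \tfrac1{\sigmastar}\psi'(\epsilon_i/\sigmastar) - A(\psi)\Big)}_{\text{fluctuation}}.
\]
The fluctuation term is an empirical mean of i.i.d.\ summands bounded by $\|\psi'\|_\infty/\sigmastar$, so a Hoeffding bound places it at the $n^{-1/2}$ scale, comfortably inside the target rate. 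For the plug-in error I would Taylor expand $\psi'$ in the two perturbations $\epsilon_i \mapsto r_i$ (governed by $\delta$) and $\sigmastar \mapsto \sigmahat$. The first-order term in $\delta$ is, by H\"older, at most $\sigmahat^{-2}\big\|\frac1n\sum_i \psi''(\epsilon_i/\sigmahat)\, x_i\big\|_{\max}\,\|\delta\|_1$; here $\|\delta\|_1 = \order(\sigmastar k\sqrt{\log p/n})$ by Theorem~\ref{ThmAdap}, while the $\ell_\infty$-factor concentrates at rate $\sqrt{\log p / n}$ because each coordinate averages products of the sub-Gaussian variable $\psi''(\epsilon_i/\sigmastar)$ (assumption (B2)) with a covariate whose mgf is controlled by (A2), making the product sub-exponential so that a union bound over $p$ coordinates applies. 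This term is therefore $\order(k\log p / n)$. The quadratic remainder is bounded by $\|\psi^{(3)}\|_\infty\,\sigmahat^{-3}\,\delta^T\Sigmahat\delta = \order(\|\delta\|_2^2) = \order(k\log p/n)$. Finally, the replacement $\sigmahat \mapsto \sigmastar$ is controlled by showing the residual-based scale estimate satisfies $|\sigmahat - \sigmastar| = \order(n^{-1/2})$: expanding $\frac1n\sum_i(\epsilon_i - x_i^T\delta)^2$, the leading term concentrates about $(\sigmastar)^2$ at the $n^{-1/2}$ rate by (B1), and the cross and quadratic terms are $\order(k\log p/n)$ exactly as above. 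Each contribution is dominated by the loose target $\order(k\log p/\sqrt n)$.

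The hard part will be the first-order term carrying $\psi''$. The hypotheses bound only $\psi'$ and $\psi^{(3)}$, so $\psi''$ is \emph{not} assumed bounded and cannot be pulled out of the average; instead I must exploit the sub-Gaussian tail of $\psi''(\epsilon_i/\sigmastar)$ from (B2), pair it with the covariate mgf bound (A2) to certify sub-exponentiality of the summands, and obtain a uniform $\ell_\infty$-deviation over all $p$ coordinates. A secondary technical point is that $\sigmahat$ appears inside $\psi''$, so I would first replace $\psi''(\epsilon_i/\sigmahat)$ by $\psi''(\epsilon_i/\sigmastar)$ (again using $|\sigmahat - \sigmastar| = \order(n^{-1/2})$ together with the Lipschitz control on $\psi''$ furnished by $\|\psi^{(3)}\|_\infty$) before applying the concentration bound. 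The remaining effort is bookkeeping: intersecting the consistency event of Theorem~\ref{ThmAdap}, the scale-concentration event, and the several sub-exponential deviation events, all of which hold with probability at least $1 - c\exp(-c'n)$.
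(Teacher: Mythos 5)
Your proposal is correct and follows essentially the same route as the paper's proof: condition~\eqref{EqnConvDist} is handled by the identical covariance computation plus the multivariate CLT (the paper invokes Lemma~\ref{LemLindeberg}), and condition~\eqref{EqnACond} by the same decomposition---a fluctuation term at rate $n^{-1/2}$, a prefactor/scale term controlled by $|\sigmahat^{-1} - (\sigmastar)^{-1}|$, and a Taylor-expanded plug-in term whose first-order piece is bounded via H\"older by the $\ell_\infty$-concentration of $\frac{1}{n}\sum_i \psi''(\epsilon_i/\sigmastar)x_i$ times the $\ell_1$-error bound~\eqref{EqnEll1}, with the quadratic remainder absorbed by $\|\psi^{(3)}\|_\infty$ and $(\betahat-\betastar)^T\Sigmahat(\betahat-\betastar)$. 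You deviate in two inessential ways. First, for the $\ell_\infty$-maxima you propose Bernstein-type sub-exponential concentration plus a union bound, whereas the paper routes all such terms through the Gaussian approximation of maxima of Chernozhukov et al.\ (Lemma~\ref{LemMaxGauss}, packaged as Lemma~\ref{LemConcX}); both are legitimate, since the one-point mgf bounds in (A2) and (B1)--(B2) are precisely sub-exponential/sub-Gaussian-type conditions. Second, you claim the sharper scale rate $|\sigmahat - \sigmastar| = \order(n^{-1/2})$ by bounding the cross term $\frac{2}{n}\sum_i \epsilon_i x_i^T(\betahat-\betastar)$ through concentration of $\|\frac{1}{n}\sum_i \epsilon_i x_i\|_\infty$ at rate $\sqrt{\log p/n}$; the paper's Lemma~\ref{LemSigma} is cruder, using $\|X^T\epsilon/n\|_\infty \le \|X\|_{\max}\|\epsilon\|_1/n = \order(\sqrt{\log p})$ and settling for $|\sigmahat - \sigmastar| = \order\left(\frac{k\log p}{\sqrt{n}}\right)$, which is all that~\eqref{EqnACond} requires (and is in fact the dominant term in the paper's final bound). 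Your sharper route is valid under (B1) and (A2) but buys nothing here. One piece of bookkeeping you could simplify: rather than expanding at $\epsilon_i/\sigmahat$ and then paying an extra Lipschitz step to decouple the random $\sigmahat$ from the summands, the paper expands $\psi'$ once around $\epsilon_i/\sigmastar$ in the combined perturbation $\deltahat_i = \frac{y_i - x_i^T\betahat}{\sigmahat} - \frac{\epsilon_i}{\sigmastar}$, so that $\sigmahat$ enters only through $\deltahat_i$ and the bounded $\psi^{(3)}$ remainder.
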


We now turn to the inverse covariance estimator. Suppose $\Sigmastar$ satisfies the $\alpha$-incoherence condition, defined by
\begin{equation}
\label{EqnIncoherence}
\max_{e \in S^c} \opnorm{\Gammastar_{eS} (\Gammastar_{SS})^{-1}}_1 \le 1 - \alpha,
\end{equation}
where $\alpha \in (0,1]$, and we denote $\Gammastar := \Sigmastar \otimes \Sigmastar$ and $S = \supp(\Thetastar)$. We also denote $\kappa_{\Sigmastar} := \opnorm{\Sigmastar}_1$ and $\kappa_{\Gammastar} := \opnorm{(\Gammastar_{SS})^{-1}}_1$.

Combining Lemma~\ref{LemCovX} in Appendix~\ref{AppUseful} with standard derivations for the graphical Lasso~\cite{RavEtal11} yields the following result:
\begin{lem*}
\label{LemGLasso}
Suppose assumptions (A1)--(A4) hold. Also suppose $\Theta^*$ satisfies the $\alpha$-incoherence condition~\eqref{EqnIncoherence} and the regularization parameter satisfies
\begin{equation*}
\frac{c_0 \sigma_{xx}}{\alpha} \sqrt{\frac{\log p}{n}} \le \lambda \le \frac{1}{6\kappa_{\Gammastar}k} \left(\frac{\alpha}{8} + 1\right)^{-1} \min\left\{\frac{1}{\kappa_{\Sigmastar}}, \; \frac{1}{\kappa_{\Sigmastar}^3 \kappa_{\Gammastar}}, \; \frac{\alpha(\alpha/8 + 1)^{-1}}{8\kappa^3_{\Sigmastar} \kappa_{\Gammastar}}\right\}.
\end{equation*}
With probability at least $1 - Cn^{-c} + \exp(-c'\log p)$, the graphical Lasso estimator~\eqref{EqnGLasso} satisfies $\supp(\Thetahat) \subseteq \supp(\Thetastar)$, and
\begin{equation*}
\|\Thetahat - \Thetastar\|_{\max} \le 2 \opnorm{(\Gammastar_{SS})^{-1}}_1 \left(1+\frac{\alpha}{8}\right)\lambda.
\end{equation*}
In particular, if each row of $\Thetastar$ is $k$-sparse, we also have the bound
\begin{equation*}
\opnorm{\Thetahat - \Thetastar}_1 \le 2 \opnorm{(\Gammastar_{SS})^{-1}}_1 \left(1+\frac{\alpha}{8}\right)\lambda k.
\end{equation*}
\end{lem*}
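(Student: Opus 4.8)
The plan is to invoke the primal-dual witness analysis of the graphical Lasso from Ravikumar et al.~\cite{RavEtal11}, supplying as the only probabilistic input a deviation bound on the sample covariance $\Sigmahat$. First I would apply Lemma~\ref{LemCovX} to obtain the elementwise concentration bound
\begin{equation*}
\|\Sigmahat - \Sigmastar\|_{\max} \le c_0 \sigma_{xx} \sqrt{\frac{\log p}{n}},
\end{equation*}
which holds with probability at least $1 - Cn^{-c} + \exp(-c'\log p)$. Under assumption (A2), each coordinate $x_{ij}x_{ik} - (\Sigma_x)_{jk}$ is sub-exponential via the one-point mgf bound, so this follows from a Bernstein-type inequality together with a union bound over the $p^2$ entries, with assumption (A3) supplying the variance proxy $\sigma_{xx}^2$. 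The hypothesized lower bound $\lambda \ge \frac{c_0 \sigma_{xx}}{\alpha}\sqrt{\log p / n}$ is precisely what guarantees that $\lambda$ dominates $\frac{8}{\alpha}\|\Sigmahat - \Sigmastar\|_{\max}$, the sampling-noise threshold required by the Ravikumar framework.

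With this bound in hand, the remainder is the standard witness construction. I would solve the restricted optimization problem on the support $S = \supp(\Thetastar)$, set the off-support entries of the witness to zero, and verify the KKT conditions of the full program~\eqref{EqnGLasso}. The incoherence condition~\eqref{EqnIncoherence} on $\Gammastar = \Sigmastar \otimes \Sigmastar$ yields strict dual feasibility of the off-support subgradient, which certifies both uniqueness of the witness as the global optimum and the inclusion $\supp(\Thetahat) \subseteq \supp(\Thetastar)$. The upper bound on $\lambda$, expressed through $\kappa_{\Sigmastar}$ and $\kappa_{\Gammastar}$, is exactly the condition ensuring the nonlinear Taylor remainder of the $\log\det$ gradient about $\Thetastar$ stays controlled, so that the linearized KKT analysis remains valid.

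The elementwise bound $\|\Thetahat - \Thetastar\|_{\max} \le 2\opnorm{(\Gammastar_{SS})^{-1}}_1(1+\alpha/8)\lambda$ then emerges from bounding the solution of the restricted linear subsystem, in which the factor $\opnorm{(\Gammastar_{SS})^{-1}}_1 = \kappa_{\Gammastar}$ controls the inverse Hessian on the active set and the $(1 + \alpha/8)$ factor absorbs the remainder contribution. Finally, for the operator-norm conclusion: because $\Thetastar$ has $k$-sparse rows and the support-recovery guarantee forces $\Thetahat$ to be supported within $\supp(\Thetastar)$, each row of $\Thetahat - \Thetastar$ has at most $k$ nonzero entries, so
\begin{equation*}
\opnorm{\Thetahat - \Thetastar}_1 = \max_j \sum_i |\Thetahat_{ij} - \Thetastar_{ij}| \le k \|\Thetahat - \Thetastar\|_{\max},
\end{equation*}
which yields the stated factor-$k$ bound.

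The only genuinely new technical content beyond citing \cite{RavEtal11} is establishing covariance concentration under the weak moment assumptions (A1)--(A4) in place of a sub-Gaussian hypothesis, and that is precisely the role of Lemma~\ref{LemCovX}. Consequently, the main obstacle for this lemma is not a new probabilistic argument but the careful bookkeeping of constants needed to verify that the hypothesized $\lambda$-window simultaneously dominates the sampling noise and satisfies the quantitative smallness requirements of the Ravikumar primal-dual theorem, so that its conclusions transfer verbatim with the explicit constants claimed here.
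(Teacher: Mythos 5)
Your proposal is correct and follows essentially the same route as the paper: Lemma~\ref{LemCovX} plus the lower bound on $\lambda$ gives $\|\Sigmahat - \Sigmastar\|_{\max} \le \frac{\alpha\lambda}{8}$, the Ravikumar et al.\ primal-dual witness machinery (their Lemmas 4--6) with the upper bound on $\lambda$ controlling the $\log\det$ Taylor remainder yields strict dual feasibility, support recovery, and the bound $\|\Thetahat - \Thetastar\|_{\max} \le 2\kappa_{\Gammastar}\left(1 + \frac{\alpha}{8}\right)\lambda$, and the row-sparsity argument converts this into the $\opnorm{\cdot}_1$ bound exactly as you describe. The one inaccuracy is your side-remark on how Lemma~\ref{LemCovX} itself is established: a Bernstein-type bound from the one-point mgf condition (A2) would produce a variance proxy of order $C_1^2$ rather than $\sigma_{xx}^2$, which is precisely why the paper instead proves that lemma via the Gaussian approximation of maxima (Lemma~\ref{LemMaxGauss}, Chernozhukov et al.)---this is also the source of the $Cn^{-c}$ term in the stated probability---but since you invoke Lemma~\ref{LemCovX} as a black box, this mis-attribution does not affect the validity of your proof of the present lemma.
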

The proof of Lemma~\ref{LemGLasso} is contained in Appendix~\ref{AppLemGLasso}.

Note that simply applying Theorem 1 in Ravikumar et al.~\cite{RavEtal11} would produce a weaker result than we want, since the concentration result in Lemma~\ref{LemCovX} would fall into the category of ``polynomial-type tails," thus yielding a suboptimal sample size requirement. Instead, we derive a statistical error guarantee suitable for our setting, building upon some of the key lemmas in Ravikumar et al.~\cite{RavEtal11}.

\subsection{Semiparametric efficiency}
\label{SecSemiEff}

To make the notions of increased efficiency more precise, we now analyze the one-step estimator $\bhat$ from the point of view of semiparametric efficiency. A review of relevant background material is contained in Appendix~\ref{AppSemi}, and the main result is Theorem~\ref{ThmSemiEff}, which states that a lower bound on the variance of any semiparametrically efficient estimator for the semiparametric regression model
\begin{equation*}
y_i = x_i^T \beta_0 + g_0(v_i) + \epsilon_i,
\end{equation*}
where $f$ is the (known) density of $\epsilon_i$, is
\begin{equation*}
\widebar{V} = \left(\E\left[\left(\frac{f'(\epsilon)}{f(\epsilon)}\right)^2\right] \cdot \E\left[(x - \E[x|v])(x-\E[x|v])^T\right]\right)^{-1}.
\end{equation*}

For a fixed set of indices $J \subseteq \{1, \dots, p\}$, we partition the linear model as
\begin{equation*}
y_i = (x_i)_J^T \betastar_J + (x_i)_{J^c}^T \betastar_{J^c} + \epsilon_i
\end{equation*}
and consider it as a subclass of the semiparametric regression model
\begin{equation}
\label{EqnSemiMain}
y_i = (x_i)_J^T \betastar_J + g_0\left((x_i)_{J^c}\right) + \epsilon_i.
\end{equation}
We then have the following result, proved in Appendix~\ref{AppThmEfficient}:
\begin{thm*}
\label{ThmEfficient}
Suppose we have i.i.d.\ observations from the linear model~\eqref{EqnLinear}. Under the assumptions of the previous theorems, the one-step estimator $(\bhat_\psi)_J$ with $\psi = \frac{-f'}{f}$, where $f$ is the pdf of the distribution of $\frac{\epsilon_i}{\sigmastar}$, is semiparametrically efficient for the model~\eqref{EqnSemiMain}.
\end{thm*}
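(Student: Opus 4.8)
The plan is to prove efficiency by showing that the limiting covariance of $(\bhat_\psi)_J$ furnished by Theorem~\ref{ThmAsympNorm} coincides exactly with the semiparametric lower bound $\widebar{V}$ from Theorem~\ref{ThmSemiEff}. Since Theorem~\ref{ThmAsympNorm} already guarantees that $\sqrt{n}\,P_J(\bhat_\psi - \betastar)$ is asymptotically Gaussian with a linear representation (hence the estimator is regular and asymptotically linear), matching the two covariances is all that remains. Concretely, I must verify two separate identities: that the scalar score factor $\E[\psi^2(\epsilon_i/\sigmastar)]/A^2(\psi)$ equals $\left(\E[(f_\epsilon'/f_\epsilon)^2(\epsilon_i)]\right)^{-1}$, where $f_\epsilon$ denotes the density of $\epsilon_i$ appearing in $\widebar{V}$, and that the matrix factor $(\Theta_x)_{JJ}$ equals $\E[(x_J - \E[x_J \mid x_{J^c}])(x_J - \E[x_J \mid x_{J^c}])^T]^{-1}$.

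For the scalar factor, I would first use the information identity for the standardized error $U = \epsilon_i/\sigmastar$, whose density is $f$. With $\psi = -f'/f$, integration by parts, legitimate under the moment and tail conditions of assumptions (B2)--(B3), which force the boundary term $\int f'' = 0$ to vanish, gives $\E[\psi'(U)] = \E[\psi^2(U)]$. Recalling $A(\psi) = \E[\sigmastar^{-1}\psi'(U)] = \sigmastar^{-1}\E[\psi^2(U)]$, this yields $\E[\psi^2(U)]/A^2(\psi) = (\sigmastar)^2/\E[\psi^2(U)]$. I would then reconcile the scale: since $f_\epsilon(e) = \sigmastar^{-1}f(e/\sigmastar)$, a direct computation gives $(f_\epsilon'/f_\epsilon)(e) = \sigmastar^{-1}(f'/f)(e/\sigmastar)$, so $\E[(f_\epsilon'/f_\epsilon)^2(\epsilon_i)] = (\sigmastar)^{-2}\E[\psi^2(U)]$. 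Combining the two displays shows the scalar factor equals $\left(\E[(f_\epsilon'/f_\epsilon)^2(\epsilon_i)]\right)^{-1}$, matching the first factor of $\widebar{V}$.

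For the matrix factor, I would invoke the block-inversion identity: writing $\Sigma_x$ in its $J, J^c$ blocks, $(\Theta_x)_{JJ}^{-1}$ equals the Schur complement $\Sigma_{JJ} - \Sigma_{JJ^c}\Sigma_{J^cJ^c}^{-1}\Sigma_{J^cJ}$, which is precisely the covariance of the residual of the $L^2$ linear projection of $x_J$ onto $x_{J^c}$. Identifying this projection with the conditional mean $\E[x_J \mid x_{J^c}]$ then gives $(\Theta_x)_{JJ}^{-1} = \E[(x_J - \E[x_J \mid x_{J^c}])(x_J - \E[x_J \mid x_{J^c}])^T]$. Assembling the scalar and matrix factors, the limiting covariance of $(\bhat_\psi)_J$ equals $\widebar{V}$, establishing efficiency.

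The main obstacle is the final identification in the matrix step: the precision submatrix $(\Theta_x)_{JJ}^{-1}$ is intrinsically the \emph{linear}-projection residual covariance, whereas $\widebar{V}$ features the \emph{conditional}-mean residual covariance, and the two agree only when $\E[x_J \mid x_{J^c}]$ is linear in $x_{J^c}$. I would therefore record (or assume) the structural condition on the covariate law—satisfied, for instance, by Gaussian and more generally elliptical designs—under which the conditional expectation coincides with the best linear predictor; without it, the one-step estimator attains the parametric-submodel optimum rather than the larger semiparametric bound. The remaining care lies purely in justifying the integration-by-parts and information identities under the stated tail assumptions, which is routine.
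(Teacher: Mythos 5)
Your proposal takes essentially the same route as the paper's proof: both match the asymptotic covariance $\frac{\E[\psi^2(\epsilon_i/\sigmastar)]}{A^2(\psi)}\,(\Theta_x)_{JJ}$ supplied by Theorem~\ref{ThmAsympNorm} against the bound $\widebar{V}$ of Theorem~\ref{ThmSemiEff}; both handle the scalar factor via the scale relations $f_{\sigmastar}(t) = \frac{1}{\sigmastar}f(t/\sigmastar)$, $f_{\sigmastar}'(t) = \frac{1}{(\sigmastar)^2}f'(t/\sigmastar)$ combined with the integration-by-parts (information) identity, which is exactly the paper's computation showing $V_1 = V_2$; and both reduce the matrix factor to the identity $(\Theta_x)_{JJ} = \left(\E\left[(x_J - \E[x_J \mid x_{J^c}])(x_J - \E[x_J \mid x_{J^c}])^T\right]\right)^{-1}$. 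The one substantive difference is in your favor: the paper asserts this last identity without proof (``we simply need to note that\dots''), whereas you correctly observe that, by block inversion, $(\Theta_x)_{JJ}^{-1}$ is intrinsically the \emph{linear}-projection residual covariance (the Schur complement $\Sigma_{JJ} - \Sigma_{JJ^c}\Sigma_{J^cJ^c}^{-1}\Sigma_{J^cJ}$), and equating it with the \emph{conditional}-mean residual covariance appearing in $\widebar{V}$ requires $\E[x_J \mid x_{J^c}]$ to be linear in $x_{J^c}$, as holds for Gaussian or elliptical designs. The structural condition you propose to record is therefore not an artifact of your argument but an implicit assumption of the paper's own proof: without it one only has $(\Theta_x)_{JJ} \preceq \left(\E\left[(x_J - \E[x_J \mid x_{J^c}])(x_J - \E[x_J \mid x_{J^c}])^T\right]\right)^{-1}$, possibly strictly, and the variance-matching step genuinely fails.
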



\begin{rem}
\label{RemDebiased}
We may compare this result with Section 3 of van de Geer et al.~\cite{vanEtal14}, in which Lasso debiasing results are derived for general convex loss functions. Translating to the linear model with i.i.d.\ (but not necessarily Gaussian) additive errors, the proposed one-step estimator takes the form
\begin{equation}
\label{EqnOneStepRho}
\bhat_\rho = \betahat + \Thetahat_\rho \cdot \frac{1}{n} \sum_{i=1}^n \rho'(y_i - x_i^T \betahat) x_i,
\end{equation}
where $\betahat$ is the solution to the $\ell_1$-penalized program
\begin{equation}
\label{EqnRhoObj}
\betahat \in \arg\min_\beta \left\{\frac{1}{n} \sum_{i=1}^n \rho(y_i - x_i^T \beta) + \lambda \|\beta\|_1\right\},
\end{equation}
and $\rho$ is assumed to be a smooth convex function. Furthermore, $\Thetahat_\rho$ is defined to be a sparse approximate inverse of the matrix $\frac{1}{n} \sum_{i=1}^n \rho''(y_i - x_i^T \betahat) x_i x_i^T$.

Although clear similarities exist between the one-step estimator~\eqref{EqnOneStepRho} and the expression~\eqref{EqnOneStepPsi}, with $\rho'$ taking the place of $\psi$, the one-step estimator~\eqref{EqnOneStepRho} is only guaranteed to be asymptotically normal when standardized appropriately. Furthermore, note that the $M$-estimator~\eqref{EqnRhoObj} is not designed to be robust to contaminated covariates, and in order to obtain appropriate error bounds, much stronger assumptions must be made on the distribution of the $x_i$'s.
\end{rem}

Finally, we note that another notion of semiparametric efficiency was recently studied in Jankova and van de Geer~\cite{JanVan16}, involving a more complicated infinite-dimensional model that is allowed to change with $n$. It was shown that when $\Theta_x$ is a sparse matrix, the same bounds may be established for semiparametric efficiency; however, van de Geer~\cite{van17} showed that without the sparsity condition, the variance of an efficient estimator may in fact be lower. We suspect that these notions could also be adapted to the setting of robust regression estimators discussed in our paper, but such derivations are beyond the scope of our present work.

The notions of efficiency we have just described should also be contrasted with the discussion of efficiency contained in Loh~\cite{Loh17}. Importantly, our present results do not require any conditions for correct support recovery, which were rather strong requirements imposed in the theory of the aforementioned paper. Furthermore, by using a one-step estimator, we do not require a second subgradient optimization routine performed on a nonconvex objective function in order to achieve efficiency, since a one-step modification of the global optimum of the convex surrogate is sufficient for our purposes.


\subsection{Confidence intervals}
\label{SecConf}

Our results from Section~\ref{SecNormality} in fact allow us to derive confidence intervals with the correct asymptotic coverage, which we briefly describe here. Furthermore, the result of Section~\ref{SecSemiEff} provides a type of ``optimality" guarantee for the size of the confidence region. We again consider a fixed subset $J \subseteq \{1, \dots, p\}$, where $|J| = m$.

Note that our work improves over previous literature in removing the assumption of sub-Gaussianity on either the $x_i$'s or $\epsilon_i$'s. Hence, in settings relevant to robust linear regression, we are also able to derive valid confidence intervals for the regression vector via a type of one-step estimator based on a consistent initial estimator.
For an error probability $\alpha \in (0,1)$, we write $\scriptB_{\alpha, J}$ to denote the subset of $\real^J$ corresponding to the direct product of $m$ intervals of the form
\begin{equation*}
\left[-\Phi^{-1} \left(\frac{1+(1-\alpha)^{1/m}}{2}\right), \; \Phi^{-1} \left(\frac{1+(1-\alpha)^{1/m}}{2}\right)\right],
\end{equation*}
where $\Phi$ is the cdf of a standard normal random variable. In particular, if $Z \sim N(0, I_m)$ is an $m$-dimensional Gaussian random vector with i.i.d.\ standard normal components, we have
\begin{equation}
\label{EqnScaleBox}
\mprob(Z \in \scriptB_{\alpha, J}) = \left(1-2\left(1 - \frac{1+(1-\alpha)^{1/m}}{2}\right)\right)^m = 1-\alpha.
\end{equation}

We have the following main result, proved in Appendix~\ref{AppThmConfReg}:
\begin{thm*}
\label{ThmConfReg}
Let $|J| = m$ be a fixed set of constant cardinality. In addition to the assumptions of Theorem~\ref{ThmAsympNorm}, suppose assumption (B3) holds, and $\|(\psi^2)''\|_\infty < \infty$. An asymptotically valid $(1-\alpha)$-confidence region for the projection $\betastar_J$ of the regression vector onto $J$ is given by
\begin{equation}
\label{EqnConfReg}
P_J \bhat_\psi + \frac{1}{\sqrt{n}} \cdot \frac{\sqrt{\frac{1}{n} \sum_{i=1}^n \psi^2\left((y_i - x_i^T \betahat)/\sigmahat\right)}}{\Ahat(\psi)} \cdot \left(\Thetahat_{JJ}\right)^{1/2} \scriptB_{\alpha, J}.
\end{equation}
Note that the region~\eqref{EqnConfReg} is a (pointwise) linear transformation of $\scriptB_{\alpha,J}$.
\end{thm*}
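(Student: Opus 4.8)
The plan is to establish asymptotic normality of the \emph{studentized} one-step estimator---that is, with the population sandwich covariance replaced by the plug-in estimate appearing in~\eqref{EqnConfReg}---and then read off the coverage from the calibration identity~\eqref{EqnScaleBox}. Write $V := \frac{\E[\psi^2(\epsilon_i/\sigmastar)]}{A^2(\psi)}(\Theta_x)_{JJ}$ for the limiting covariance furnished by Theorem~\ref{ThmAsympNorm}, and let $\widehat{V} := \frac{\frac{1}{n}\sum_i \psi^2((y_i-x_i^T\betahat)/\sigmahat)}{\Ahat^2(\psi)}\Thetahat_{JJ}$ denote its empirical version, so that the matrix factor in~\eqref{EqnConfReg} is exactly $\widehat{V}^{1/2}$. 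Theorem~\ref{ThmAsympNorm} gives $\sqrt{n}\,P_J(\bhat_\psi-\betastar)\stackrel{d}{\to}N(0,V)$, equivalently $V^{-1/2}\sqrt{n}\,P_J(\bhat_\psi-\betastar)\stackrel{d}{\to}N(0,I_m)$, since $V$ is positive definite (the scalar prefactor is positive and $(\Theta_x)_{JJ}\succ0$). If I can show $\widehat{V}\stackrel{p}{\to}V$, then the continuous mapping theorem yields $\widehat{V}^{-1/2}V^{1/2}\stackrel{p}{\to}I_m$, and Slutsky's theorem gives $\widehat{V}^{-1/2}\sqrt{n}\,P_J(\bhat_\psi-\betastar)\stackrel{d}{\to}N(0,I_m)$. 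The region~\eqref{EqnConfReg} is precisely $\{\beta_J:\widehat{V}^{-1/2}\sqrt{n}(P_J\bhat_\psi-\beta_J)\in\scriptB_{\alpha,J}\}$ (using that $\scriptB_{\alpha,J}$ is symmetric about the origin, so the sign in front of $\widehat{V}^{1/2}$ is immaterial), whence its asymptotic coverage is $\mprob(N(0,I_m)\in\scriptB_{\alpha,J})=1-\alpha$ by~\eqref{EqnScaleBox}.

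It thus remains to verify $\widehat{V}\stackrel{p}{\to}V$, which I would split into three pieces. The denominator satisfies $\Ahat(\psi)\stackrel{p}{\to}A(\psi)$ directly from~\eqref{EqnACond}, since the hypothesis $n\succsim k^2\log^3 p$ forces $\tfrac{k\log p}{\sqrt{n}}\to0$. The matrix factor obeys $\Thetahat_{JJ}\stackrel{p}{\to}(\Theta_x)_{JJ}$ entrywise, because the $\ell_1$-operator-norm bound~\eqref{EqnThetaCond} controls each of the finitely many entries indexed by the fixed set $J$ of constant cardinality. The only substantial step is the numerator, namely $\frac{1}{n}\sum_{i=1}^n\psi^2\!\left(\frac{y_i-x_i^T\betahat}{\sigmahat}\right)\stackrel{p}{\to}\E[\psi^2(\epsilon_i/\sigmastar)]$.

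For the numerator I would Taylor-expand $g:=\psi^2$, which is legitimate because $\|(\psi^2)''\|_\infty<\infty$ by hypothesis. Setting $\Delta_i := \frac{y_i-x_i^T\betahat}{\sigmahat}-\frac{\epsilon_i}{\sigmastar} = \epsilon_i\big(\tfrac1{\sigmahat}-\tfrac1{\sigmastar}\big)+\tfrac{x_i^T(\betastar-\betahat)}{\sigmahat}$, the expansion bounds the gap from $\frac1n\sum_i g(\epsilon_i/\sigmastar)$ by $\frac1n\sum_i|g'(\epsilon_i/\sigmastar)|\,|\Delta_i|+\tfrac{\|g''\|_\infty}{2n}\sum_i\Delta_i^2$, while $\frac1n\sum_i g(\epsilon_i/\sigmastar)\stackrel{p}{\to}\E[\psi^2(\epsilon_i/\sigmastar)]$ by the weak law, the relevant moment being finite under (B2). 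To control the remainder I would invoke the scale consistency $\sigmahat\stackrel{p}{\to}\sigmastar$ and the estimation-error bound $\|\betahat-\betastar\|_1\precsim\sigmastar k\sqrt{\log p/n}$ (both from the consistency of $\betahat_{\Lep}$ in Theorem~\ref{ThmLepski}), together with the sub-exponential and moment hypotheses (B1)--(B3); these let me bound $\frac1n\sum_i\Delta_i^2$ and the cross term via Cauchy--Schwarz after establishing concentration of the averages $\frac1n\sum_i\epsilon_i^2$, $\frac1n\sum_i g'(\epsilon_i/\sigmastar)^2$, and $\frac1n\sum_i(x_i^T(\betastar-\betahat))^2$. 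The main obstacle---and the reason (B3) and $\|(\psi^2)''\|_\infty<\infty$ are imposed here but not for consistency---is exactly this uniform control of the plug-in residuals: the expansion must simultaneously absorb the scale error and the regression error while only second-moment-type assumptions on $\psi$ and its derivatives are available, so the estimates closely parallel the Taylor-expansion arguments already carried out in the proof of Theorem~\ref{ThmAsympNorm}.
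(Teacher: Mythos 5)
Your proposal is correct and follows essentially the same route as the paper: both reduce the theorem to Theorem~\ref{ThmAsympNorm} plus consistency of the studentizing quantities (your $\widehat{V}\stackrel{p}{\to}V$ is exactly the content of the paper's Lemma~\ref{LemSlutsky}, split into the same three pieces~\eqref{EqnA}--\eqref{EqnC}), with the numerator handled in both cases by a Taylor expansion of $\psi^2$ exploiting $\|(\psi^2)''\|_\infty<\infty$ and assumption (B3), followed by Slutsky's theorem and the calibration identity~\eqref{EqnScaleBox}.
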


In the case $m = 1$, the confidence region for a fixed coordinate $j$ reduces to the interval
\begin{equation*}
(\bhat_\psi)_j \pm \frac{1}{\sqrt{n}} \cdot \frac{\sqrt{\frac{1}{n} \sum_{i=1}^n \psi^2\left((y_i - x_i^T \betahat)/\sigmahat\right)}}{\Ahat(\psi)} \cdot \sqrt{\Thetahat_{jj}} \cdot \Phi^{-1}\left(1-\frac{\alpha}{2}\right).
\end{equation*}
Note that as in Javanmard and Montanari~\cite{JavMon14JMLR}, the set $\scriptB_{\alpha, J}$ could be replaced with any other set of measure $1-\alpha$ under an $m$-dimensional standard normal distribution.

Note that Theorem~\ref{ThmConfReg} is a result that holds for \emph{any} choice of score functions $\psi$, not necessarily corresponding to the score function of the true pdf. Importantly,  we can construct valid confidence intervals without needing to know the true distribution of the $\epsilon_i$'s. However, in order to construct \emph{optimal} intervals, we would need to use the correct $\psi$ function corresponding to the distribution. Further note that the condition $\|(\psi^2)''\|_\infty < \infty$ will certainly hold when $\psi, \psi'$, and $\psi''$ are all bounded, and will also hold when $\psi(t) = t$ is the score function of the standard normal pdf.

\begin{rem}
As mentioned in Remark~\ref{RemDebiased}, our recipe for constructing confidence intervals resembles the proposal of van de Geer et al.~\cite{vanEtal14}. However, the key difference is that the vanilla Lasso estimator would in general not achieve the correct rates of consistency in order for the confidence intervals to be asymptotically valid for the prescribed sample size scaling. Similarly, Javanmard and Montanari~\cite{JavMon14JMLR} include a section in their paper discussing how to construct confidence intervals in the case of non-Gaussian noise; however, again, they assume that the noise and covariance distributions are sufficiently well-behaved to guarantee fast convergence of the initial Lasso estimator.
\end{rem}

Finally, it is worth discussing the relationship between our proposed method and the robust inference procedures studied in classical robust statistics. These include robust Wald-type and likelihood-ratio type tests \cite{Ron82, HamEtal11}, which are more generally applicable to hypothesis testing scenarios involving linear combinations of predictors. Our method resembles Wald-type tests in the sense that they are constructed with respect to a robust $M$-estimator, and also include robust estimates of the (inverse) covariance---however, our results are primarily designed for hypothesis testing of single coordinates. It is an interesting open question to see if analogs of the robust Wald-type or $\tau$-tests \cite{Ron82} could be derived in the high-dimensional setting. It is plausible that such tests exist using an initial $M$-estimator such as the regression estimator introduced in this paper (cf.\ van de Geer and Stucky~\cite{vanStu16} and Sur et al.~\cite{SurEtal17} for some theory in the non-robust setting).


\section{Simulations}

We now report the result of experiments that we performed to validate our theoretical predictions.


\subsection{Summary of procedure}

We first briefly summarize the steps of the robust regression procedure.

\begin{enumerate}
\item Compute rough lower and upper bounds on the scale, using the median of means estimator with tolerance $\delta$ and $K = \left \lfloor 8 \log\left(\frac{e^{1/8}}{\delta}\right) \wedge \frac{n}{2} \right \rfloor$ groups: $\sigma_{\max}^2 = 2 \sigma^2_{MoM}$, and $\sigma_{\min} = \frac{\sigma_{\max}}{2^{\sqrt{n}}}$.
\item Compute the $\ell_1$-penalized Huber $M$-estimator $\betahat_\tau$ for all $\tau$ in a grid of values from $\sigma_{\min} = \frac{\sigma_{\max}}{2^M}$ to $\sigma_{\max}$, according to the program~\eqref{EqnHuberReg}.
\item Use Lepski's method to adaptively choose $\betahat_{\Lep}$: $\betahat_{\Lep} = \betahat_{(j^*)}$, according to the rule~\eqref{EqnLepJ}.
\item Use one-step estimation to improve efficiency, according to equation~\eqref{EqnOneStepPsi}, with $\betahat = \betahat_{\Lep}$ and $\Thetahat$ from the graphical Lasso.
\end{enumerate}

\textbf{Composite gradient descent:} In order to obtain the estimators $\betahat_\tau$ in the second step above, we employ the composite gradient descent algorithm, which has fast rates of convergence for convex functions~\cite{Nes07}. Specifically, the updates are
\begin{align*}
\betahat^{t+1} & \in \arg\min_{\beta} \left\{\Loss_n(\beta^t) +\inprod{\Loss_n(\beta^t)}{\beta - \beta^t} + \frac{\eta}{2} \|\beta - \beta^t\|_2^2 + \lambda \tau \|\beta\|_1\right\} \\
& = \arg\min_\beta \left\{\frac{1}{2} \left\|\beta - \left(\beta^t - \frac{1}{\eta} \nabla \Loss_n(\beta^t)\right)\right\|_2^2 + \frac{\lambda \tau}{\eta} \|\beta\|_1\right\} \\
& = S_{\lambda \tau/\eta}\left(\beta^t - \frac{1}{\eta} \nabla \Loss_n(\beta^t)\right),
\end{align*}
where $S_{\lambda \tau/\eta}(\beta)$ is the soft-thresholding operator defined componentwise according to
\begin{equation*}
S^j_{\lambda \tau/\eta}(\beta) = \sign(\beta_j)\left(|\beta_j| - \frac{\lambda \tau}{\eta}\right)_+.
\end{equation*}
Note also that
\begin{equation*}
\nabla \Loss_n(\beta) = \frac{1}{n} \sum_{i=1}^n \ell'_\tau\left((x_i^T \beta - y_i) w(x_i)\right) w^2(x_i) x_i.
\end{equation*}


\subsection{Synthetic data}

We first ran experiments involving synthetic data to check the validity of our theory. In particular, the simulation results confirmed that our estimator is (a) consistent and (b) efficient. We provide simulation results for two different scenarios:
\begin{itemize}
\item[(i)] Additive errors $\epsilon_i$ are drawn from a heavy-tailed distribution, but $x_i$'s have a sub-Gaussian distribution.
\item[(ii)] Both $\epsilon_i$'s and $x_i$'s are drawn from heavy-tailed distributions.
\end{itemize}
In case (i), we generated the $x_i$'s from a standard normal distribution. The $\epsilon_i$'s were generated from a $t$-distribution with 3 degrees of freedom, to make the variance finite. We then scaled the additive errors by 0.01. In case (ii), we generated both $x_i$'s and $\epsilon_i$'s from a $t$-distribution with 3 degrees of freedom, and again scaled the additive errors by 0.01.

In each case, we will also approximated the variance of our estimator based on repeated draws of the data, and compared the results with and without the one-step procedure. Our theory predicts that the one-step estimator $\bhat$ has a smaller asymptotic variance than the estimate $\betahat_{\Lep}$ obtained without the final step. However, the estimate $\betahat_{\Lep}$ should always lead to a consistent estimator, provided a proper weighting function $w$ is used in the Huber $M$-estimator in the case of (ii).

For the one-step estimator, we need to compute $\hat{A}$, which depends on $\psi'$. Recall that the pdf of a $t$-distribution with three degrees of freedom and scale parameter 1 is equal to
\begin{equation*}
f(t) = \frac{2}{\pi \sqrt{3}} \frac{1}{\left(1 + \frac{t^2}{3}\right)^2} = \frac{6\sqrt{3}}{\pi(3+t^2)^2}.
\end{equation*}
Then
\begin{equation*}
f'(t) = \frac{-24\sqrt{3}}{\pi} \cdot \frac{t}{(3+t^2)^3},
\end{equation*}
from which we may compute
\begin{equation*}
\psi(t) = \frac{-f'(t)}{f(t)} = \frac{4t}{3+t^2}, \quad \text{and} \quad \psi'(t) = \frac{-4t^2+12}{(3+t^2)^2}.
\end{equation*}

Finally, we set the error tolerance $\delta = 0.05$ for the MoM estimator, and took $\sigma_{\min} = \frac{\sigma_{\max}}{2^{2n^{1/3}}}$ for the Lepski gridding. We took $b = 1$ and $B = I_p$, and $\lambda = 0.005b\sqrt{\frac{\log p}{n}}$ for the penalized Huber estimators, and used $C = 20$ when adaptively choosing the regression parameters in Lepski's method.

The plots in Figures~\ref{FigL2Var1} and~\ref{FigL2Var2} show the $\ell_2$-error of the estimator $\betahat_{\Lep}$ computed via Lepski's method, in comparison with the error $\|\bhat_\psi - \betastar\|_2$ of the one-step correction. We also compare the variance of individual coordinates. As seen in the plots of Figure~\ref{FigL2Var1}, the $\ell_2$-error of the one-step estimator is comparable (and generally slightly smaller) than the $\ell_2$-error of the initial Lepski estimator, and both estimators appear to be consistent. Furthermore, the variance of the estimator is generally reduced after the one-step correction. Similar behavior is seen in Figure~\ref{FigL2Var2}.

\begin{figure}
\begin{center}
\begin{tabular}{cc}
\includegraphics[width=0.48\textwidth]{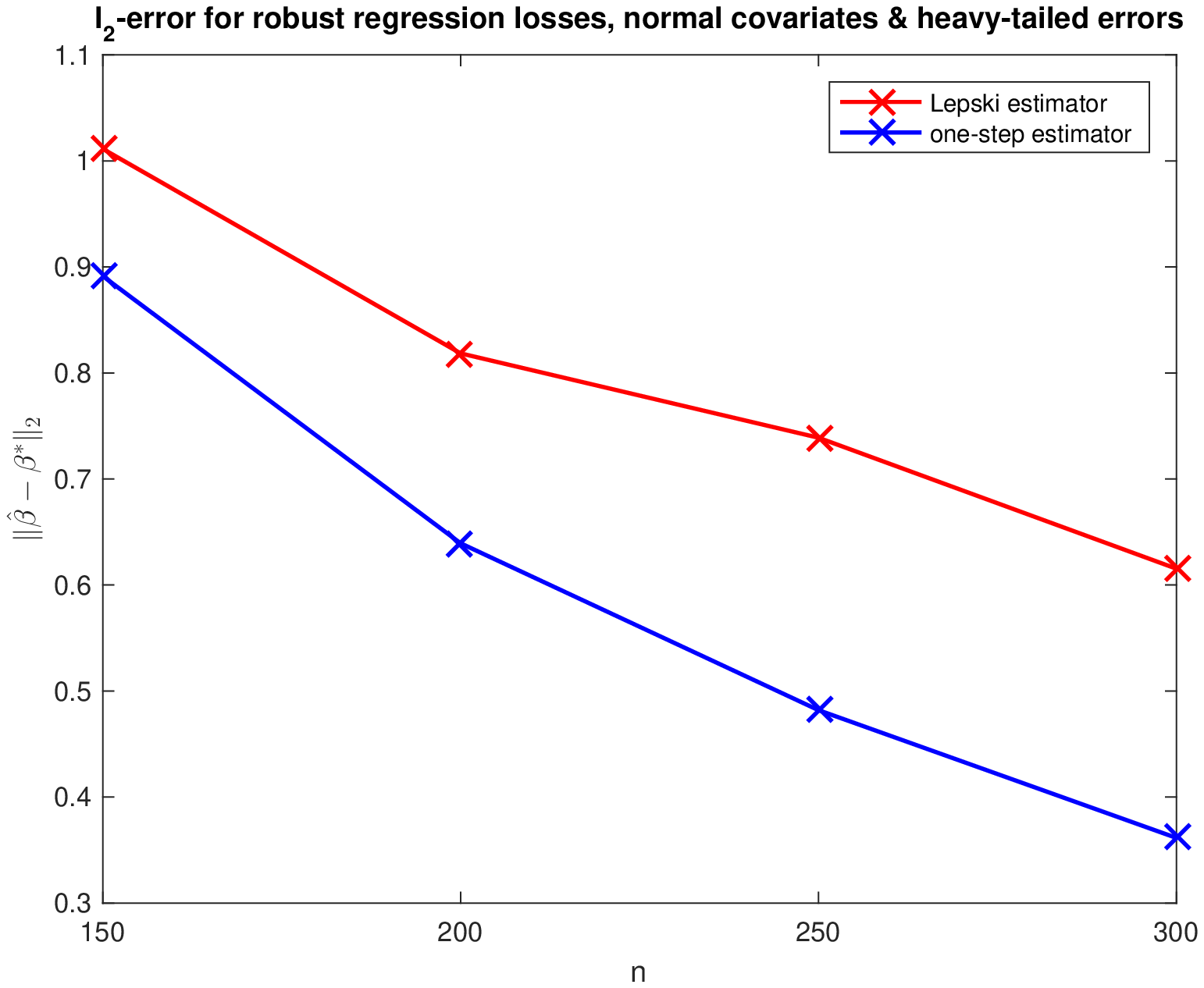} & \includegraphics[width=0.48\textwidth]{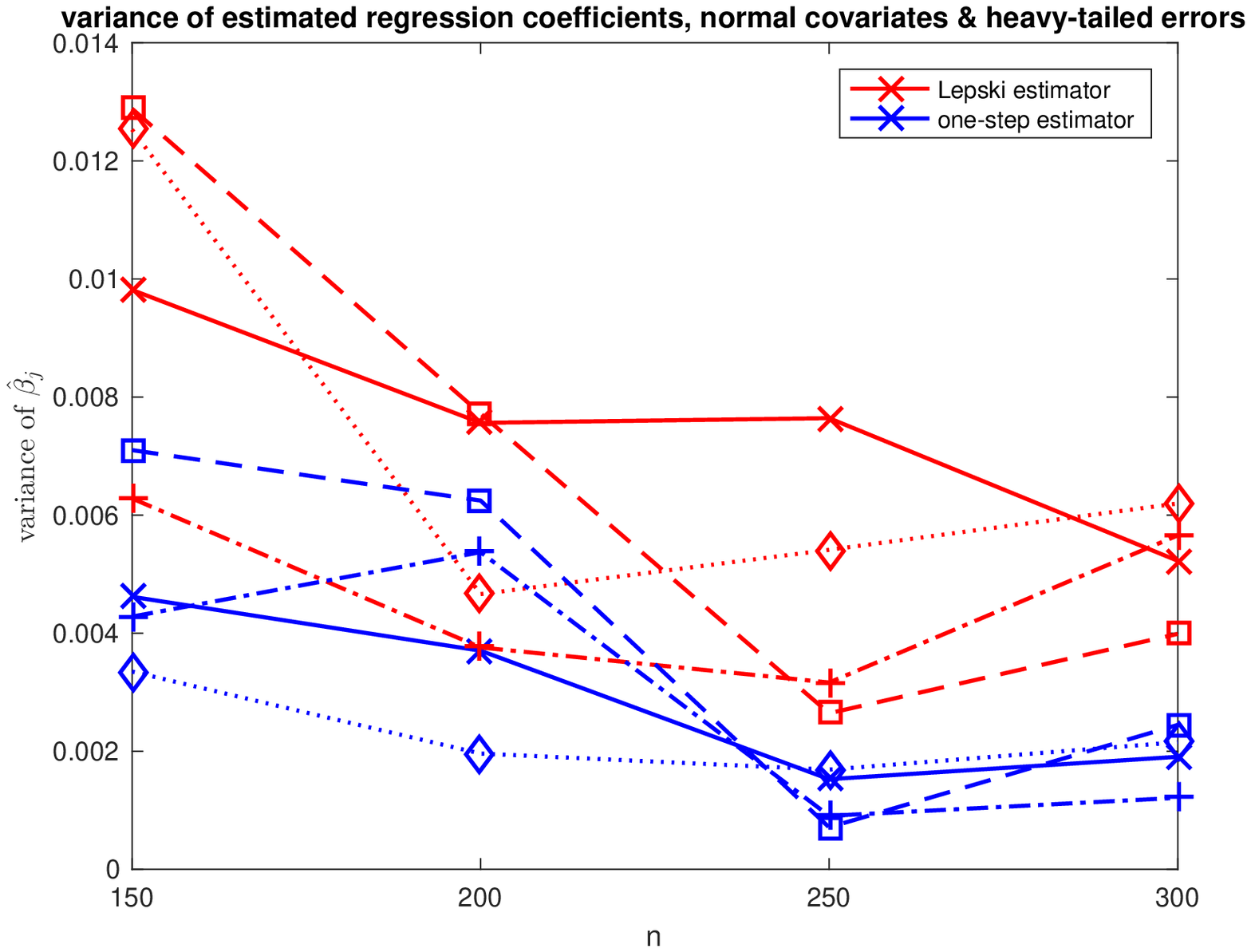} \\
(a) & (b)
\end{tabular}
\caption{Plots comparing $\ell_2$-error and variance of estimators obtained via Lepski's method (red) and Lepski's method followed by a one-step correction (blue), when $p = 200$ and $k = 4$. Covariates were generated from a multivariate normal distribution with identity covariance, and errors were generated from a $t$-distribution with 3 degrees of freedom. Panel (a) shows the error $\|\betahat - \betastar\|_2$, averaged over 10 trials. Panel (b) shows the empirical variance of $\betahat_j$, for each of the four nonzero regression coefficients, computed with respect to 10 trials. The coefficients are distinguished in the figure using different line markings.}
\label{FigL2Var1}
\end{center}
\end{figure}

\begin{figure}
\begin{center}
\begin{tabular}{cc}
\includegraphics[width=0.48\textwidth]{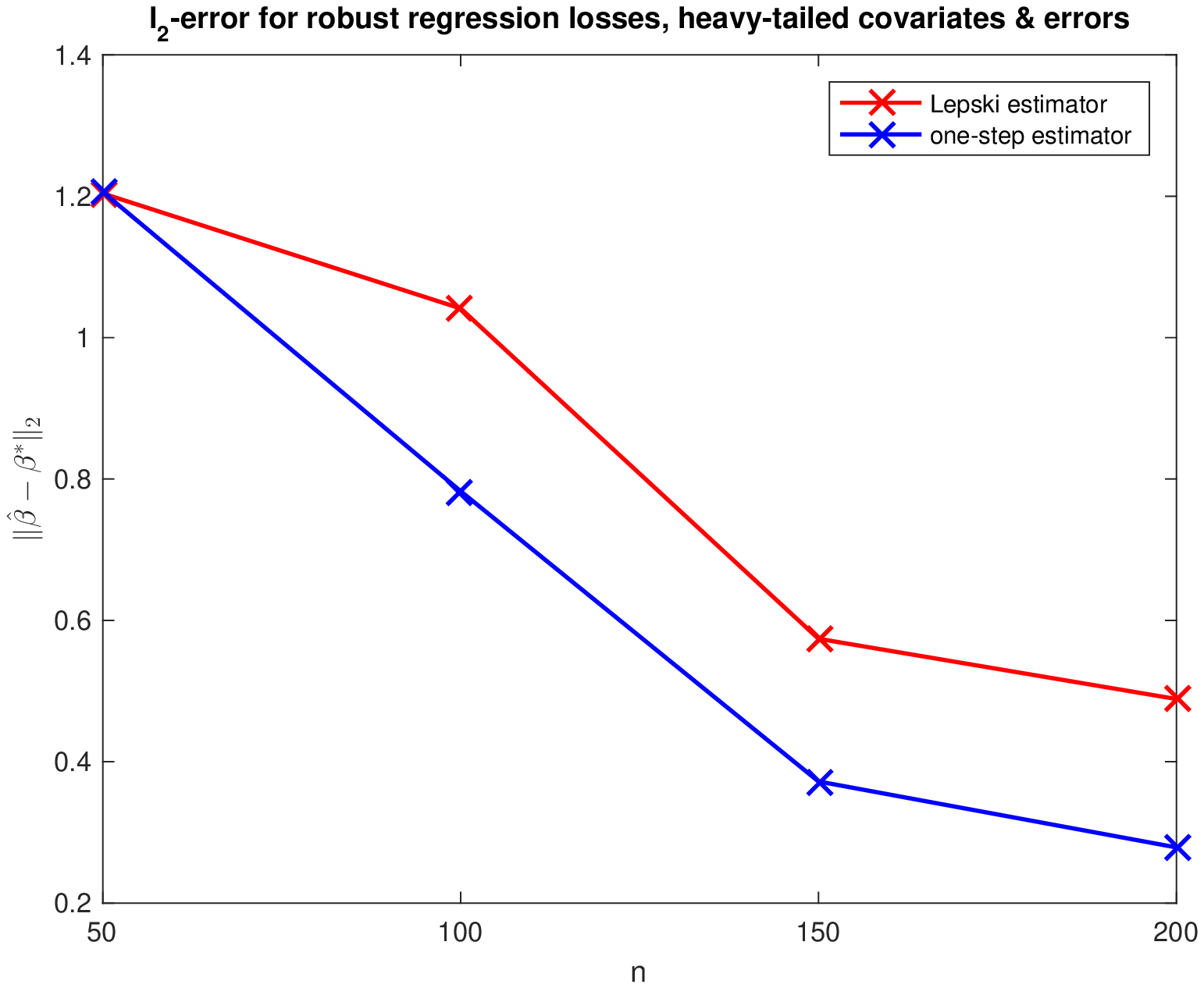} & \includegraphics[width=0.48\textwidth]{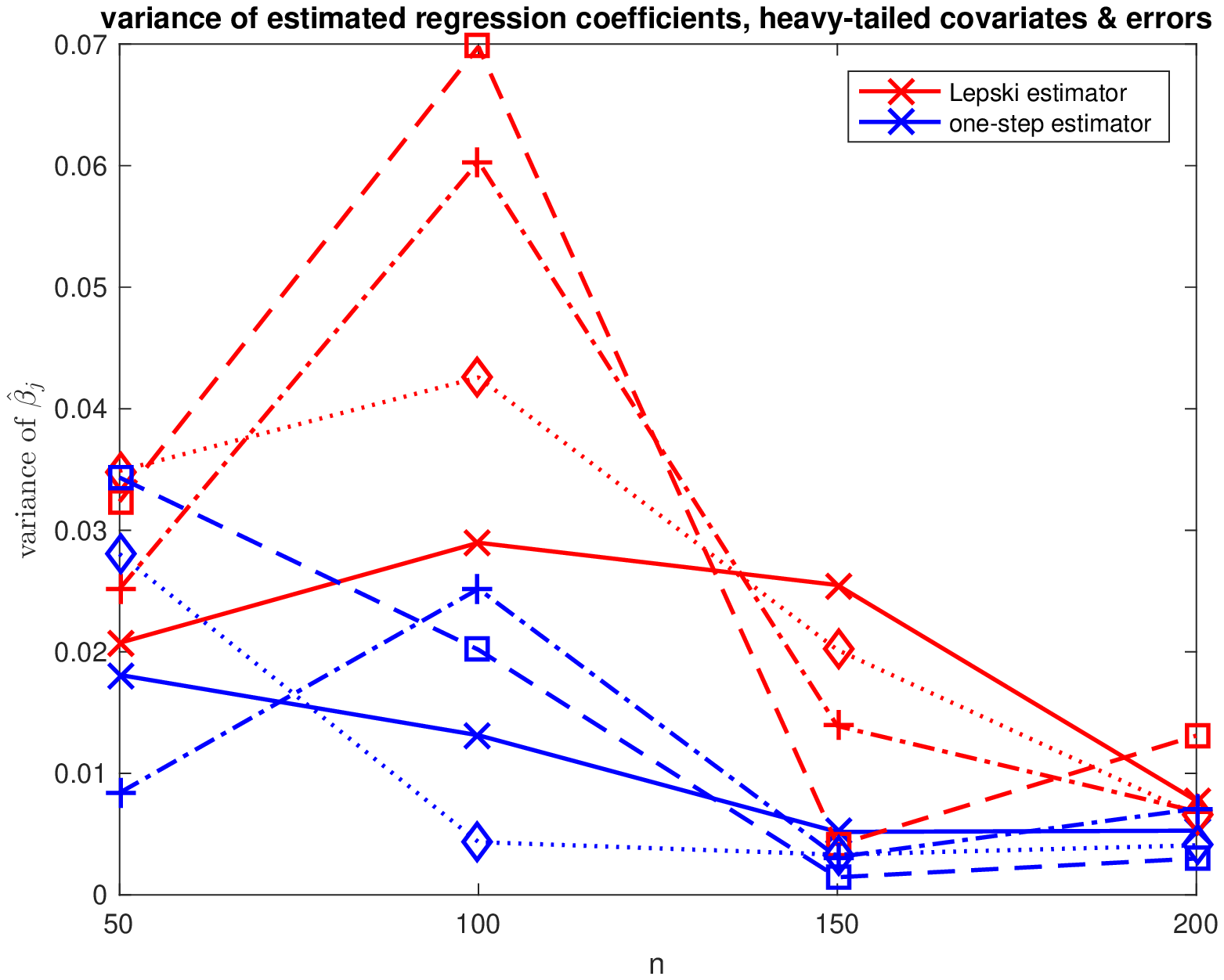} \\
(a) & (b)
\end{tabular}
\caption{Plots comparing $\ell_2$-error and variance of estimators obtained via Lepski's method (red) and Lepski's method followed by a one-step correction (blue), when $p = 100$ and $k = 4$. Both covariates and errors were generated from $t$-distributions with 3 degrees of freedom. Panel (a) shows the error $\|\betahat - \betastar\|_2$, averaged over 10 trials. Panel (b) shows the empirical variance of $\betahat_j$, for each of the four nonzero regression coefficients, computed with respect to 10 trials. The coefficients are distinguished in the figure using different line markings.}
\label{FigL2Var2}
\end{center}
\end{figure}


We also provide a set of simulation results illustrating the validity of our method for constructing confidence intervals. We simulated data from a linear model with $t$-distributed additive errors. We then constructed confidence intervals according to the method of Section~\ref{SecConf}. For comparison, we used the method of van de Geer et al.~\cite{vanEtal14} to construct confidence intervals as if the errors were Gaussian, beginning with the consistent estimator obtained using our adaptive Huber estimator.

\begin{figure}
\begin{center}
\begin{tabular}{cc}
\includegraphics[width=0.48\textwidth]{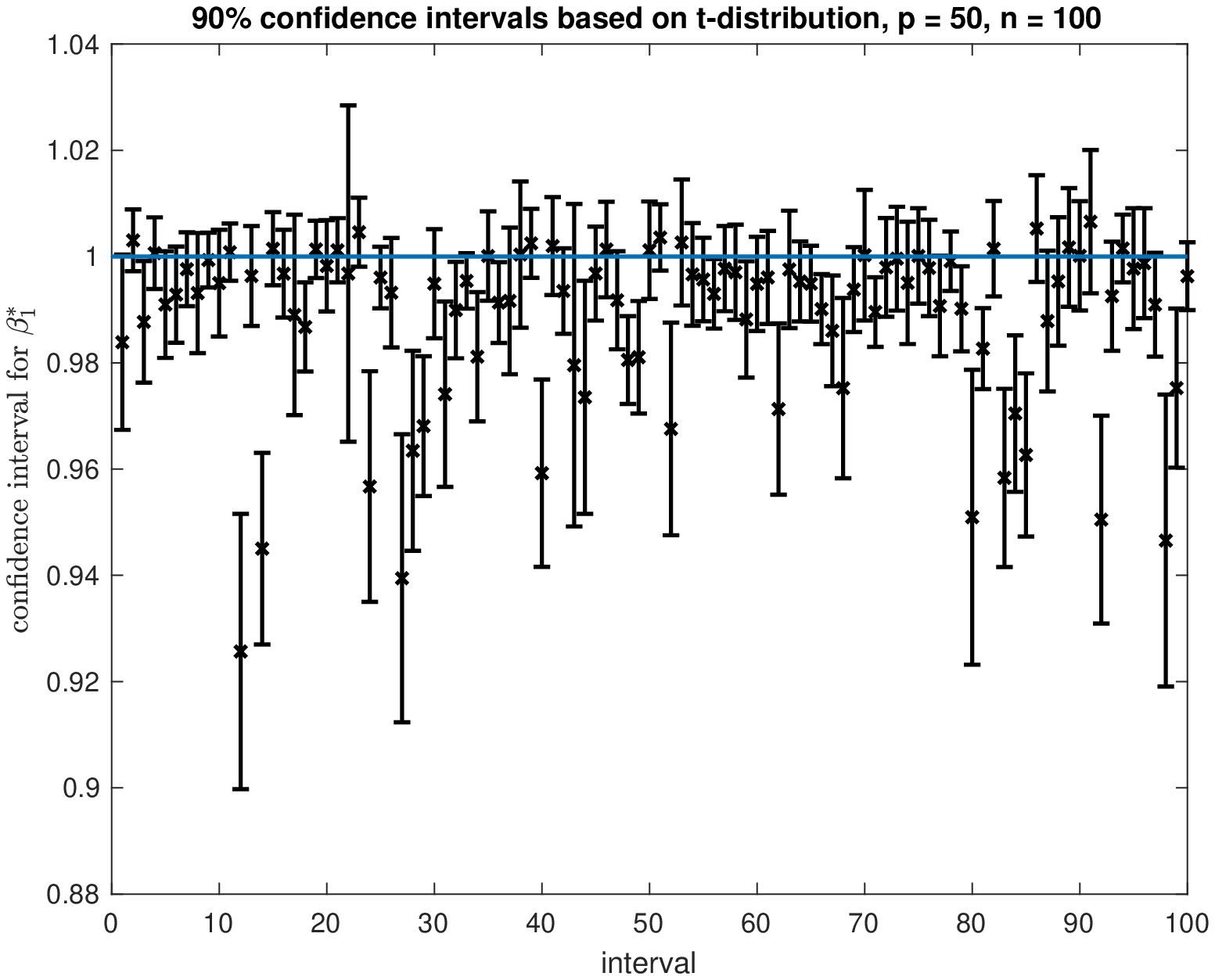} & \includegraphics[width=0.48\textwidth]{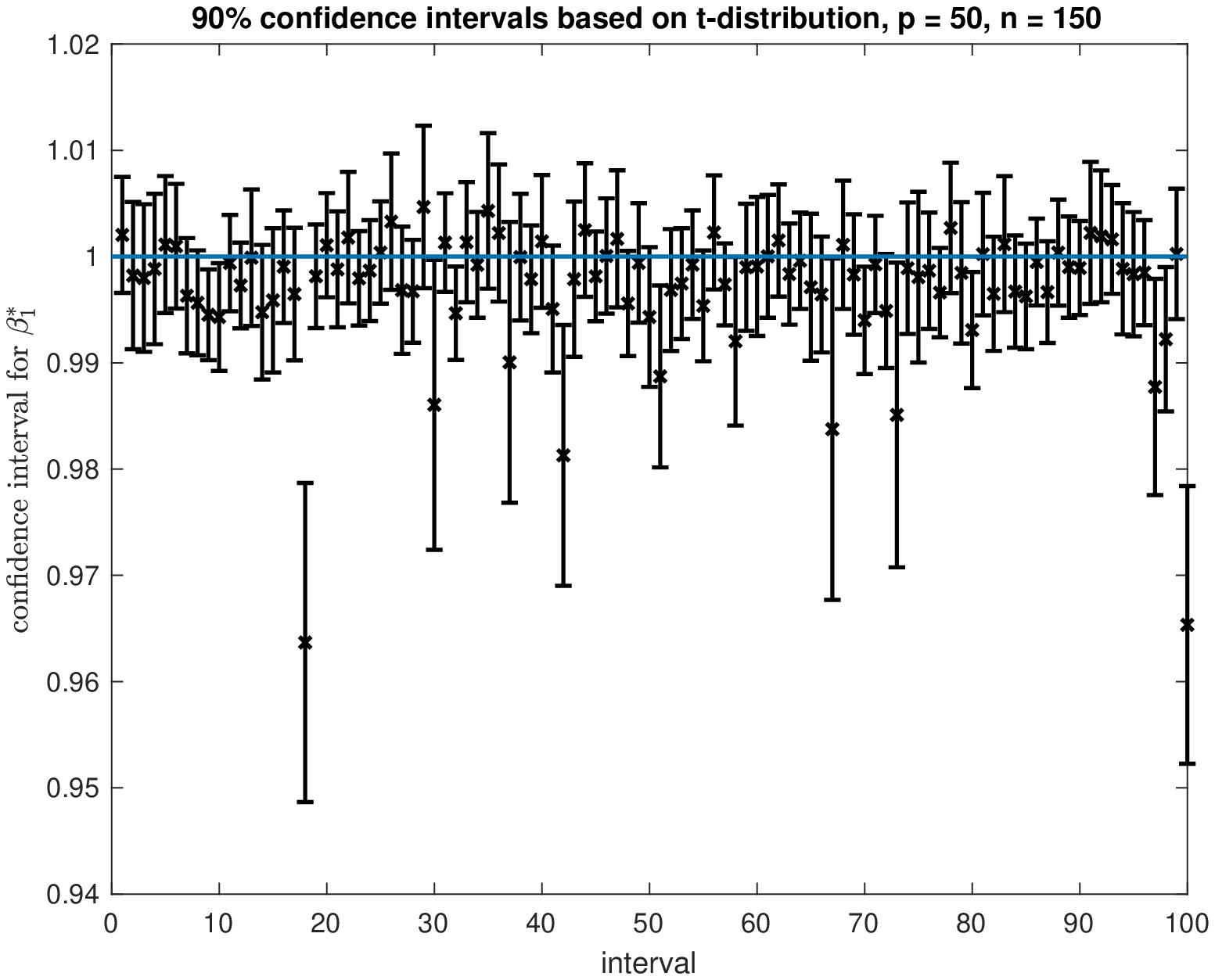} \\
(a) & (b)
\end{tabular}
\begin{tabular}{c}
\includegraphics[width=0.48\textwidth]{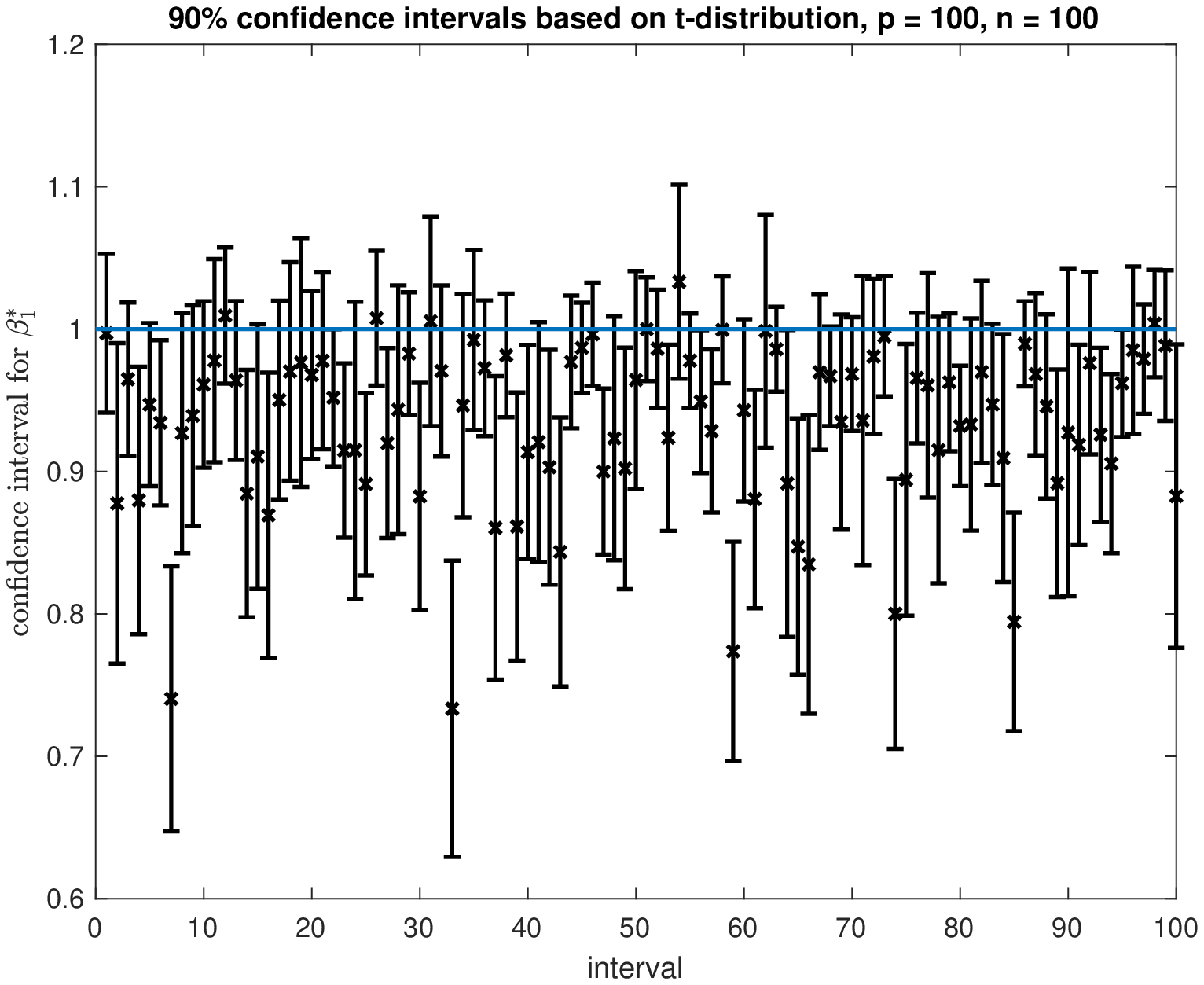} \\
(c)
\end{tabular}
\caption{Plots showing results of confidence interval simulations based on 100 trials. Data were generated with covariates and errors drawn from $t$-distributions with 3 degrees of freedom, and confidence intervals were constructed at the 90\% level. Panels (a) and (b) show confidence interval coverage for $p = 50$ with $n = 100$ and $n=150$. The empirical coverage was 67\% and 85\%, respectively. Panel (c) shows confidence interval coverage for $p = 100$ and $n = 100$. The empirical coverage was 62\%.}
\label{FigConfidence}
\end{center}
\end{figure}

We observe that the confidence intervals constructed according to our procedure have the proper empirical coverage, whereas the intervals constructed using a Gaussian error assumption are generally too narrow. In comparison to the $t$-interval coverage reported in Figure~\ref{FigConfidence}, the coverage levels for confidence intervals constructed using a normality assumption were (a) 96\%, (b) 100\%, and (c) 95\%. This illustrates the importance of using an approximately valid error distribution for valid inference. To check for consistency as $n \rightarrow \infty$, we ran the same confidence interval experiment with $p = 10$ and $n = 100$, and observed that the empirical coverage was 90\% for confidence intervals computed with respect to the correct $t$-distribution, compared to 88\% for confidence intervals computed with respect to the normal distribution. This agrees with the theoretical predictions that confidence intervals based on the normal assumption will generally be overly conservative, and the $t$-intervals indeed achieve the proper level of coverage when $n$ is sufficiently large relative to $p$.


\subsection{Real data experiment}

We also analyzed a data set collected from X-ray microanalysis of archaeological glass vessels~\cite{JanEtal98}. This data set has been analyzed in several other papers on high-dimensional robust linear regression with leverage points~\cite{Mar11, SmuYoh17}. The data set consists of $n = 180$ observations and $p = 486$ frequencies, which we use as predictors for the contents of compound 13, which is PbO. As discussed in \cite{Mar11}, the data set contains clear outliers.

Following the method of Smucler and Yohai~\cite{SmuYoh17} for tuning parameter selection, we chose the parameter $\lambda$ in our algorithm via 5-fold cross-validation using a $\tau$-scale of the residuals~\cite{YohZam88, SalEtal08}. (Note that our theorems are stated with $\lambda$ equal to $\sqrt{\frac{\log p}{n}}$ times universal constants, but in practice, choosing $\lambda$ in a data-driven manner leads to better predictive performance.) Based on this procedure, Lepski's method yielded a sparse vector with only one nonzero component, corresponding to frequency number 154. This fit corresponds to the value 0.126 of the $\tau$-scale, which is comparable to the values reported in Smucler and Yohai~\cite{SmuYoh17} using alternative methods: $MM$-Lasso (0.086), adaptive $MM$-Lasso (0.083), sparse-LTS (0.329), Lasso (0.131), and adaptive Lasso (0.138).

We also attempted to construct confidence intervals for the selected frequency. The simulations were inconclusive, due to the fact that various implementations of the graphical Lasso algorithm on the $486 \times 486$ matrix of covariates failed to converge. We suspect that this is because the assumption that the population-level inverse covariance matrix is sparse is violated, or the covariate distribution is heavy-tailed and/or possesses extreme outliers, so that the rate of convergence of the sample covariance matrix to its mean is too slow. This experiment reveals that the additional assumptions required to construct confidence intervals may be somewhat more stringent than the assumptions needed for consistency in terms of estimation or prediction error.


\section{Discussion}

Throughout this paper, we have assumed that second moments of the $\epsilon_i$'s and $x_i$'s exist. We now describe a small adaptation that applies in the case when second moments do not exist; it is still possible to obtain a consistent estimator, at the expense of efficiency. The two places where we have required existence of second moments in our analysis are (a) in the computation of the rough scale parameter bounds $\sigma_{\min}$ and $\sigma_{\max}$; and (b) in the matrix inversion step where we estimate the inverse of the covariance matrix $\Sigma_x$.

We begin by discussing item (a). We may use the median absolute deviation (MAD) as the scale parameter when the second moments are not finite. Recall that the population-level MAD is given by
\begin{equation*}
\MAD(X) = \med\left(|X - \med(X)|\right),
\end{equation*}
where $\med$ denotes the median operator. By Lemma~\ref{LemMADsum} in Appendix~\ref{AppUseful}, we know that under the assumption that the distribution of $\epsilon_i$ is symmetric and unimodal, we have
\begin{equation*}
\MAD(\epsilon_i) \le \MAD(x_i^T \betastar + \epsilon_i) = \MAD(y_i),
\end{equation*}
so that the MAD estimate based on the $y_i$'s can indeed be used as an upper bound on the scale of the $\epsilon_i$'s, analogous to the case of the variance.

For item (b), we may use a slight modification of the one-step estimator, such as the following (cf.\ Welsh and Ronchetti~\cite{WelRon02}):
\begin{itemize}
\item $\frac{1}{n} \sum_{i=1}^n x_i x_i^T w_i \frac{\psi_\sigma(v_i r_i)}{r_i}$
\item $\frac{1}{n} \sum_{i=1}^n \psi_\sigma'(r_i) \cdot \frac{1}{n} \sum_{i=1}^n w_i x_i x_i^T$.
\end{itemize}
Under appropriate assumptions on the tails and moments of the covariates, similar consistency and asymptotic normality results could be derived for estimators based on these quantities.

We also mention two interesting open questions that are not addressed by our theory. The first is what one might do in the case when the distribution of $\epsilon_i$ is not necessarily symmetric. The second is of more practical relevance: What type of one-step estimator could we use for obtaining a more efficient estimator and/or confidence intervals when the shape of the error distribution is unknown? Some general guidelines for choosing the $\psi$ function in the one-step estimator, or a more principled procedure for flagging outliers and then fitting confidence intervals based on a fitted distribution, would be quite useful in practice.

Finally, an interesting direction to pursue would be whether an approach based on Lepski's method could also be used to adaptively choose the correct parameter for the Huber loss in the case of an $\epsilon$-contaminated model (either in location estimation or linear regression). A related question is how to adaptively choose a trimming parameter for the robust location estimator based on trimmed means. These are both questions of theoretical interest that have largely remained open in the classical robust statistics literature---since they depend on minimizing variance quantities, rather than deriving high-probability error bounds, the machinery developed in this paper does not carry over directly. However, it is plausible that an appropriate modification of the Lepski-based approach may result in theoretically valid conclusions for obtaining a near-optimal estimator from the point of view of variance.



\section*{Acknowledgments}

The author would like to thank Ezequiel Smucler for sharing the archaeological dataset used in the simulations. Part of this work was completed while the author was visiting the Isaac Newton Institute in Cambridge, UK. The author is supported by NSF grant DMS-1749857.

\appendix

\section{Semiparametric efficiency}
\label{AppSemi}

In this Appendix, we review some concepts in semiparametric estimation. For a more detailed overview, we refer the reader to the textbooks by Bickel et al.~\cite{BicEtal93} or Hansen~\cite{Han17}.

Following the treatment of Newey~\cite{New90}, we first define the semiparametric regression model~\cite{EngEtal86}:
\begin{definition}
The \emph{semiparametric regression model} characterized by a parameter vector $\beta_0 \in \real^q$ and function $g_0$ is given by
\begin{equation}
\label{EqnSemiLin}
y_i = x_i^T \beta_0 + g_0(v_i) + \epsilon_i, \qquad \text{for } 1 \le i \le n,
\end{equation}
where the $x_i$'s and $v_i$'s are vectors of exogenous observations, $y_i$ is a scalar response, and $\epsilon_i$ is independent additive error.
\end{definition}

We assume the distribution of the $\epsilon_i$'s is unknown, and our goal is to estimate the unknown vector $\beta_0$ from i.i.d.\ observations $\{(y_i, x_i, v_i)\}_{i=1}^n$. Recall the notion of efficiency:

\begin{definition}
An estimate $\betahat$ of $\beta_0$ is \emph{semiparametrically efficient} if it is regular (i.e., $\sqrt{n} (\betahat - \beta_0)$ is asymptotically normal), and the asymptotic variance is minimal among all regular estimates of $\beta_0$.
\end{definition}

Semiparametric efficiency is usually established by obtaining lower bounds on the asymptotic variance of an efficient estimator by considering Cramer-Rao bounds for different parametric ``submodels," which are models that include the semiparametric model under consideration and are equal to the semiparametric model for a certain value of the parameter. In particular, the Cramer-Rao bound for any parametric subclass must provide a lower bound for the semiparametric estimation problem, as well, and we have the variance lower bound
\begin{equation*}
\widebar{V} = \sup_\theta V_\theta,
\end{equation*}
where $V_\theta$ is the Cramer-Rao bound corresponding to a parametric submodel indexed by $\theta$. If one can find a parametric submodel with a Cramer-Rao bound that matches the asymptotic variance of a particular semiparametric estimator, that estimator is guaranteed to be efficient. Note that for multidimensional problems, the supremum is taken with respect to the partial order of positive semidefinite matrices (and the supremum is guaranteed to exist under appropriate regularity conditions, which apply in the setting considered here).

Newey~\cite{New90} presents an approach to compute the variance bound $\widebar{V}$ directly by considering the projection of the score function of the semiparametric model onto the tangent set corresponding to the scores of all parametric submodels, where the score of the semiparametric model is the partial derivative of the negative log likelihood with respect to the parameter vector. To be more formal, consider a parametric submodel parametrized by $\theta = (\beta, \eta)$, where both $\beta$ and $\eta$ are vectors, and $\beta$ corresponds to the $q$-dimensional parametric part of the original semiparametric model. The overall score function may be partitioned as $S_\theta = (S_\beta, S_\eta)$. By block matrix inversion, we may verify that the Cramer-Rao bound for estimation of $\beta$ in the parametric submodel is then given by
\begin{equation*}
V_\theta = \left(\E[(S_\beta - \Btil S_\eta)(S_\beta - \Btil S_\eta)^T]\right)^{-1},
\end{equation*}
where $\Btil := \E[S_\beta S_\eta^T] \left(E[S_\eta S_\eta^T]\right)^{-1}$. In particular, $\Btil S_\eta$ is the best linear predictor of $S_\beta$ as a function of $S_\eta$.

We now define the tangent set to be the mean square closure of all $q$-dimensional linear combinations of scores of parametric submodels:
\begin{equation*}
\scriptT = \left\{\scriptS \in \real^q: \E[\|\scriptS\|_2^2] < \infty, \; \exists A_j S_{\theta_j} \text{ s.t. } \E[\|\scriptS -  A_j S_{\theta_j}\|_2^2] \right\},
\end{equation*}
where the $A_j$'s are matrices with $q$ rows and the $S_{\theta_j}$'s are the score vectors of various parametric submodels.

We have the following result, which holds generally for semiparametric estimation (not just in the case of the semiparametric regression model):
\begin{lem*} [Theorem 3.2 of Newey~\cite{New90}]
\label{LemNewVar}
Suppose $\scriptT$ is a linear space, and let $S^\scriptT_\beta$ denote the projection of $S_\beta$ on $\scriptT$. Then
\begin{equation*}
\widebar{V} = \left(\E\left[(S_\beta - S^\scriptT_\beta) (S_\beta - S^\scriptT_\beta)^T\right]\right)^{-1},
\end{equation*}
provided the matrix is nonsingular.
\end{lem*}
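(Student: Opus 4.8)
The plan is to identify the supremum $\widebar{V} = \sup_\theta V_\theta$ with the inverse covariance of the \emph{efficient score} $S_\beta - S^\scriptT_\beta$, by exploiting the Hilbert-space projection structure already encoded in the single-submodel Cramer--Rao bound. The starting point is the formula $V_\theta^{-1} = \E[(S_\beta - \Btil S_\eta)(S_\beta - \Btil S_\eta)^T]$ recorded just above the statement, together with the observation that $\Btil S_\eta$ is precisely the $L^2$-projection of $S_\beta$ onto the (finite-dimensional) linear span of the components of $S_\eta$. Since every such span is by definition contained in the tangent set $\scriptT$, each submodel residual is a projection residual onto a subspace of $\scriptT$, and maximizing $V_\theta$ in the positive semidefinite order amounts to minimizing the residual covariance over larger and larger subspaces of $\scriptT$.

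First I would record the key Pythagorean identity. For any $q$-vector $h$ whose components lie in $\scriptT$, the difference $S^\scriptT_\beta - h$ lies in $\scriptT$ while $S_\beta - S^\scriptT_\beta$ is orthogonal to $\scriptT$; hence the cross terms vanish entrywise and
\begin{equation*}
\E[(S_\beta - h)(S_\beta - h)^T] = \E[(S_\beta - S^\scriptT_\beta)(S_\beta - S^\scriptT_\beta)^T] + \E[(S^\scriptT_\beta - h)(S^\scriptT_\beta - h)^T].
\end{equation*}
The second term is positive semidefinite, so $\E[(S_\beta - h)(S_\beta - h)^T] \succeq \E[(S_\beta - S^\scriptT_\beta)(S_\beta - S^\scriptT_\beta)^T]$. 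Taking $h = \Btil S_\eta$, whose components lie in $\scriptT$, and inverting --- which reverses the positive semidefinite order --- yields $V_\theta \preceq (\E[(S_\beta - S^\scriptT_\beta)(S_\beta - S^\scriptT_\beta)^T])^{-1}$ for \emph{every} submodel, and therefore the upper bound $\widebar{V} \preceq (\E[(S_\beta - S^\scriptT_\beta)(S_\beta - S^\scriptT_\beta)^T])^{-1}$.

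Next I would establish achievability, namely that the supremum attains this bound in the limit. Because $S^\scriptT_\beta$ is the projection of $S_\beta$ onto $\scriptT$, and $\scriptT$ is the mean-square closure of finite linear combinations $\sum_j A_j S_{\theta_j}$ of submodel scores, there is a sequence of such combinations converging to $S^\scriptT_\beta$ in $L^2$. Substituting these approximants for $h$, the residual covariance $\E[(S_\beta - h)(S_\beta - h)^T]$ converges to $\E[(S_\beta - S^\scriptT_\beta)(S_\beta - S^\scriptT_\beta)^T]$, so the associated $V_\theta$ converges to the claimed bound. Combining this with the upper bound of the previous paragraph and the nonsingularity hypothesis gives $\widebar{V} = (\E[(S_\beta - S^\scriptT_\beta)(S_\beta - S^\scriptT_\beta)^T])^{-1}$.

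The main obstacle is precisely this achievability step: a priori the approximants $\sum_j A_j S_{\theta_j}$ are linear combinations across \emph{several} submodels, whereas each $V_\theta$ is attached to a \emph{single} submodel carrying a finite-dimensional nuisance score $S_\eta$. Converting an $L^2$-approximation of $S^\scriptT_\beta$ into a legitimate submodel whose own nuisance tangent span realizes that approximation requires the regularity conditions guaranteeing that $\scriptT$ is genuinely a linear space spanned by valid parametric submodels, that the projection $S^\scriptT_\beta$ exists, and that the limiting efficient information matrix $\E[(S_\beta - S^\scriptT_\beta)(S_\beta - S^\scriptT_\beta)^T]$ is nonsingular. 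Since the statement is exactly Theorem 3.2 of Newey~\cite{New90}, I would invoke those regularity hypotheses directly rather than reconstruct the measure-theoretic details of constructing the submodels.
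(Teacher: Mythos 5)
The paper offers no proof of this lemma: it is imported verbatim as Theorem 3.2 of Newey~\cite{New90}, so the only meaningful comparison is against Newey's own argument, which your sketch faithfully mirrors. Your upper-bound half is complete and correct: since each $\Btil S_\eta$ lies in $\scriptT$, the Pythagorean decomposition gives $\E[(S_\beta - \Btil S_\eta)(S_\beta - \Btil S_\eta)^T] \succeq \E[(S_\beta - S^\scriptT_\beta)(S_\beta - S^\scriptT_\beta)^T]$, and inverting (legitimate because the right-hand side is nonsingular by hypothesis) bounds every $V_\theta$ and hence $\widebar{V}$. One point worth making explicit: the cross terms vanish in matrix (outer-product) form only because $\scriptT$ is closed under left multiplication by constant $q \times q$ matrices --- which does hold here, by its definition as the mean-square closure of combinations $A_j S_{\theta_j}$ --- whereas orthogonality of $S_\beta - S^\scriptT_\beta$ to $\scriptT$ in the scalar inner-product sense alone would not suffice. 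The attainability half is where all the substance of Newey's theorem lives, and there your proposal is a pointer rather than a proof: to turn an approximant $A_j S_{\theta_j}$, which mixes several submodels, into a bound attained by a single submodel, one stacks those finitely many submodels into one whose nuisance parameter is the concatenation $(\eta_1, \dots, \eta_m)$; the projection of $S_\beta$ onto the stacked nuisance span does at least as well as the particular combination $A_j S_{\theta_j}$, so the stacked submodel's Cramer--Rao bound dominates $\left(\E[(S_\beta - A_j S_{\theta_j})(S_\beta - A_j S_{\theta_j})^T]\right)^{-1}$, and one then passes to the limit using continuity of matrix inversion at the nonsingular limit. Deferring that construction to Newey's regularity conditions is not circular (you invoke the hypotheses, not the conclusion), and it is exactly the same dependence on \cite{New90} that the paper itself incurs by quoting the lemma without proof; but as written, your argument establishes the result modulo that submodel construction rather than self-containedly.
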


For the model~\eqref{EqnSemiLin}, we denote a parametrization of $g_0(v)$ as $g(v,\eta)$, where $\eta$ is a parameter such that $g(v, \eta_0) = g_0(v)$. Then the log likelihood may be written as
\begin{equation*}
p_{\beta, \eta} (y|x, v) = \log f\left(y - x^T \beta + g(v, \eta)\right),
\end{equation*}
where $f$ is the density of $\epsilon_i$. Taking partial derivatives and evaluating at the true parameter values $(\beta_0, \eta_0)$, we obtain the score functions
\begin{equation*}
S_\beta = \frac{f'(\epsilon)}{f(\epsilon)} \cdot x, \qquad S_\eta = \frac{f'(\epsilon)}{f(\epsilon)} \cdot g_\eta,
\end{equation*}
where $\epsilon = y - x^T \beta_0 - g_0(v)$ and $g_\eta \defn \frac{\partial g(v, \eta)}{\partial \eta} \Big |_{\eta = \eta_0}$. It is not hard to verify that the tangent set is equal to
\begin{equation*}
\scriptT = \left\{\frac{f'(\epsilon)}{f(\epsilon)} \cdot D(v): \; \E\left[\left(\frac{f'(\epsilon)}{f(\epsilon)}\right)^2 \|D(v)\|^2\right] < \infty \right\},
\end{equation*}
using the observation that the parametric submodel with $g(v, \eta) = g_0(v) + \eta^T D(v)$ yields the score $S_\eta = \frac{f'(\epsilon)}{f(\epsilon)} \cdot D(v)$. Furthermore, $\scriptT$ is clearly a linear space.

In order to compute $S^\scriptT_\beta$, we use the following result:
\begin{lem*} [Lemma 3.4 in Newey~\cite{New90}]
\label{LemNewProj}
If $UW$ has finite second moment and $V$ and $W$ are functions of some random variable $T$, such that $\E[UU^T \mid T]$ is constant and positive definite, then the projection of $UW$ on the space
\begin{equation*}
\scriptT_V := \left\{U D(V): \; \E[\|U D(V)\|_2^2] < \infty \right\}
\end{equation*}
is equal to $U \E[W \mid V]$.
\end{lem*}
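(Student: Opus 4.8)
The plan is to recognize the claim as an instance of the orthogonal projection theorem and to verify directly that $U\,\E[W \mid V]$ has the two properties that characterize the orthogonal projection of $UW$ onto $\scriptT_V$. I would work in the Hilbert space of square-integrable random vectors with inner product $\langle A, B\rangle = \E[A^T B]$, inside which $\scriptT_V$ is a linear subspace. By the characterization of orthogonal projections, it suffices to show that (i) $U\,\E[W\mid V] \in \scriptT_V$, and (ii) the residual $UW - U\,\E[W\mid V] = U\big(W - \E[W\mid V]\big)$ is orthogonal to every element of $\scriptT_V$. Once both hold, uniqueness of the best approximation forces $U\,\E[W\mid V]$ to be the projection.

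For (i), take $D(V) = \E[W\mid V]$, which is a function of $V$ by construction, so $U\,\E[W\mid V]$ has the correct form. Square-integrability follows by conditioning on $T$: since $V$ and $W$ are functions of $T$, the quantity $\E[W\mid V]$ is $\sigma(T)$-measurable, so conditioning on $T$ lets the constant $\E[UU^T\mid T]$ be pulled out, reducing $\E\big[\|U\,\E[W\mid V]\|_2^2\big]$ to a bounded multiple of $\E\big[\|\E[W\mid V]\|_2^2\big]\le \E[\|W\|_2^2]$, which is finite because $UW$ has finite second moment (positive definiteness of $\E[UU^T\mid T]$ ensures the proportionality constant is nonzero, so finiteness does transfer).

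Step (ii) is the crux and rests on a two-stage conditioning. For an arbitrary $U\,D(V)\in\scriptT_V$, I would expand
\begin{equation*}
\big\langle U\big(W - \E[W\mid V]\big),\; U\,D(V)\big\rangle = \E\Big[\big(W - \E[W\mid V]\big)^T\, U^T U\, D(V)\Big].
\end{equation*}
First condition on $T$: because $W$, $V$, and hence $D(V)$ and $\E[W\mid V]$ are all $\sigma(T)$-measurable, the only surviving randomness inside the conditional expectation is that of $U$, and by hypothesis $U^T U$ may be replaced by the constant $\E[U^T U\mid T]$ (the trace of the constant matrix $\E[UU^T\mid T]$) and factored out. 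What remains is a constant times $\E\big[(W - \E[W\mid V])^T\,D(V)\big]$. Now condition on $V$ and use the tower property: since $D(V)$ is $\sigma(V)$-measurable and $\E\big[W - \E[W\mid V]\,\big|\,V\big] = 0$, this expectation vanishes, establishing orthogonality.

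The main obstacle is not a hard estimate but the careful bookkeeping of the measurability and conditioning structure: one must exploit that $V$ and $W$ are functions of the \emph{same} variable $T$ on which the conditional second moment of $U$ stabilizes, so that $U$'s second moment can be extracted as a constant in the very step where the centered factor $W - \E[W\mid V]$ is still present, and only afterwards collapse that factor by conditioning on $V$. A secondary well-posedness point is that $\scriptT_V$ should be closed in $L_2$ for ``the projection'' to be unambiguous; this holds because the same conditioning identity shows $\E\big[\|U\,D(V)\|_2^2\big]$ equals a fixed positive multiple of $\E\big[\|D(V)\|_2^2\big]$, so $D(V)\mapsto U\,D(V)$ is, up to scaling, an isometry on $L_2$ functions of $V$, whence its image $\scriptT_V$ is complete and therefore closed.
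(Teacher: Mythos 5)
Your proof is correct. The paper does not actually prove this lemma---it is quoted with attribution as Lemma 3.4 of Newey~\cite{New90}---so there is no in-paper argument to compare against; your route (exhibit $U\,\E[W\mid V]$ as a member of $\scriptT_V$, then verify orthogonality of the residual $U(W-\E[W\mid V])$ to every $U D(V)$ by conditioning first on $T$, pulling out the constant conditional second moment of $U$, and then conditioning on $V$) is the standard projection-characterization proof and is essentially the argument in the cited source. One bookkeeping caveat: your replacement of $U^T U$ by $\tr\left(\E[U U^T \mid T]\right)$ is legitimate precisely when $U$ is a scalar or a column vector, so that $U^T U = \tr(U U^T)$; this covers the only case the paper uses (the lemma is applied in the derivation of Theorem~\ref{ThmSemiEff} with scalar $U = f'(\epsilon)/f(\epsilon)$), but for a genuinely matrix-valued $U$, constancy of $\E[U U^T \mid T]$ does not imply constancy of $\E[U^T U \mid T]$, and the hypothesis would have to be read as a condition on $\E[U^T U \mid T]$ for your computation (and the statement itself) to go through. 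A final remark: once membership and orthogonality are established, the Pythagorean identity already yields uniqueness of the minimizer, so your closedness argument for $\scriptT_V$, while correct, is not strictly needed.
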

Applying Lemma~\ref{LemNewProj} with $W = x$, $V = v$, and $U = \frac{f'(\epsilon)}{f(\epsilon)}$, we conclude that
\begin{equation*}
S^\scriptT_\beta = \frac{f'(\epsilon)}{f(\epsilon)} \cdot \E[x|v].
\end{equation*}
Combining this with Lemma~\ref{LemNewVar}, we arrive at the following result:
\begin{thm*}
\label{ThmSemiEff}
Suppose $x$ has finite second moments and
\begin{equation*}
0 < \E\left[\left(\frac{f'(\epsilon)}{f(\epsilon)}\right)^2\right] < \infty.
\end{equation*}
Then
\begin{equation*}
\widebar{V} = \left(\E\left[\left(\frac{f'(\epsilon)}{f(\epsilon)}\right)^2 \cdot (x - \E[x|v])(x-\E[x|v])^T\right]\right)^{-1},
\end{equation*}
provided the matrix is nonsingular.
\end{thm*}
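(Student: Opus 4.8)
The plan is to assemble the pieces already developed in this Appendix. Since the tangent set $\scriptT$ has been shown to be a linear space and $S_\beta = \frac{f'(\epsilon)}{f(\epsilon)} \cdot x$ is the $\beta$-score of the semiparametric model, Lemma~\ref{LemNewVar} applies directly and reduces the computation of the efficiency bound to
\begin{equation*}
\widebar{V} = \left(\E\left[(S_\beta - S^\scriptT_\beta)(S_\beta - S^\scriptT_\beta)^T\right]\right)^{-1},
\end{equation*}
where $S^\scriptT_\beta$ is the projection of $S_\beta$ onto $\scriptT$. Thus the entire argument rests on identifying $S^\scriptT_\beta$ and then evaluating the second moment of the residual.

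To compute the projection, I would invoke Lemma~\ref{LemNewProj} with $U = \frac{f'(\epsilon)}{f(\epsilon)}$ (a scalar), $W = x$, and $V = v$. The hypothesis requiring verification is that $\E[UU^T \mid T]$ is constant and positive definite, which here amounts to checking that $\E\left[\left(\frac{f'(\epsilon)}{f(\epsilon)}\right)^2 \mid v\right]$ does not depend on $v$. This is exactly where the modeling assumption $\epsilon_i \condind (x_i, v_i)$ enters: by independence, the conditional expectation collapses to the unconditional one, $\E\left[\left(\frac{f'(\epsilon)}{f(\epsilon)}\right)^2\right]$, which is a finite positive constant by the theorem's hypothesis $0 < \E\left[\left(\frac{f'(\epsilon)}{f(\epsilon)}\right)^2\right] < \infty$. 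The finite-second-moment condition on $UW = \frac{f'(\epsilon)}{f(\epsilon)} x$ follows from the same independence together with the assumption that $x$ has finite second moments. Lemma~\ref{LemNewProj} then yields $S^\scriptT_\beta = \frac{f'(\epsilon)}{f(\epsilon)} \cdot \E[x \mid v]$, as recorded in the discussion preceding the theorem.

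Finally, I would substitute this projection back into the expression for $\widebar{V}$. The residual is
\begin{equation*}
S_\beta - S^\scriptT_\beta = \frac{f'(\epsilon)}{f(\epsilon)} \left(x - \E[x \mid v]\right),
\end{equation*}
so that its outer product has expectation
\begin{equation*}
\E\left[\left(\frac{f'(\epsilon)}{f(\epsilon)}\right)^2 (x - \E[x\mid v])(x - \E[x\mid v])^T\right],
\end{equation*}
and inverting this matrix gives the claimed formula, with nonsingularity being the stated regularity assumption. I expect the only genuine obstacle to be the verification of the hypotheses of Lemma~\ref{LemNewProj}---in particular the constancy of $\E[UU^T \mid T]$---since this is the step that crucially exploits the independence of the additive error from the covariates, and it is what makes the finite-dimensional score orthogonal to the nuisance tangent directions after centering $x$ by $\E[x \mid v]$. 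I would also note that the theorem is deliberately stated in unfactored form, so no further simplification (such as factoring out $\E[(f'/f)^2]$ via independence, as in the main-text display) is required to conclude.
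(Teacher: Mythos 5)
Your proposal is correct and follows essentially the same route as the paper: apply Lemma~\ref{LemNewVar} to reduce the bound to the residual of the projection of $S_\beta$ onto the tangent set, compute that projection via Lemma~\ref{LemNewProj} with $U = \frac{f'(\epsilon)}{f(\epsilon)}$, $W = x$, $V = v$, and substitute back. Your explicit verification of the hypotheses of Lemma~\ref{LemNewProj} (constancy of $\E[U^2 \mid x, v]$ via the independence $\epsilon \condind (x,v)$, and finiteness of second moments) is a detail the paper leaves implicit, but it is the same argument.
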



\section{Proof of Theorem~\ref{ThmHubOutput}}
\label{AppThmHubOutput}

The success of our proposed method depends critically on Theorem~\ref{ThmHubOutput}. We now provide the analysis of the bound supplied in the theorem.

We begin by analyzing the estimator
\begin{equation*}
\betatil_{\tau} \in \arg\min_{\|\beta - \betastar\|_2 \le \frac{\tau}{2b'}} \left\{\frac{1}{n} \sum_{i=1}^n \ell_\tau\left((x_i^T \beta - y_i)w(x_i)\right) w(x_i) + \lambda \tau \|\beta\|_1\right\},
\end{equation*}
where we have introduced a side constraint, depending on a parameter $b'$ to be specified later. We will show that such optima $\betatil_\tau$ lie in the interior of the constraint set, hence agree with the global optima $\betahat_\tau$ of the unconstrained problem.


\subsection{Basic inequality}
\label{AppBasic}

Let
\begin{equation*}
\Loss_n(\beta) := \frac{1}{n} \sum_{i=1}^n \ell_\tau\left((x_i^T \beta - y_i)w(x_i)\right) w(x_i).
\end{equation*}
We will derive an $\ell_2$-error bound on $\|\betatil_\tau - \betastar\|_2$, assuming the following conditions:
\begin{itemize}
\item (Regularization parameter)
\begin{equation}
\label{EqnReg}
\|\nabla \Loss_n(\betastar)\|_\infty \le \frac{\lambda \tau}{2}
\end{equation}
\item (RSC condition)
\begin{multline}
\label{EqnRSC}
\Loss_n(\beta) - \Loss_n(\betastar) - \inprod{\nabla \Loss_n(\betastar)}{\beta - \betastar} \ge \alpha \|\beta - \betastar\|_2^2, \\
\forall \beta \text{ s.t. } \|\Delta\|_2 \le \frac{\tau}{2b_0} \text{ and } \|\Delta_{S^c}\|_1 \le 3 \|\Delta_S\|_1,
\end{multline}
where we have denoted $\Delta := \beta - \betastar$.
\end{itemize}
In Appendices~\ref{AppReg} and~\ref{AppRSC}, we will show that the conditions~\eqref{EqnReg} and~\eqref{EqnRSC} hold w.h.p.

We have the basic inequality
\begin{equation}
\label{EqnBasic1}
\Loss_n(\betatil_{\tau}) + \lambda \tau \|\betatil_{\tau}\|_1 \le \Loss_n(\betastar) + \lambda \tau \|\betastar\|_1.
\end{equation}
Hence,
\begin{equation}
\label{EqnBasic2}
\inprod{\nabla \Loss_n(\betastar)}{\betatil_{\tau} - \betastar} \le \Loss_n(\betatil_{\tau}) - \Loss_n(\betastar) \le \lambda \tau \left(\|\betastar\|_1 - \|\betatil_\tau\|_1\right),
\end{equation}
where the first inequality is due to the convexity of $\Loss_n$. Therefore, we have
\begin{equation*}
0 \le \lambda \tau \left(\|\betastar\|_1 - \|\betatil_{\tau}\|_1\right) + \|\nabla \Loss_n(\betastar)\|_\infty \|\betatil_{\tau} - \betastar\|_1.
\end{equation*}
Denoting $\nuhat = \betatil_{\tau} - \betastar$ and using the bound~\eqref{EqnReg}, we then have
\begin{equation*}
0 \le \lambda \tau \left(\|\nuhat_S\|_1 - \|\nuhat_{S^c}\|_1\ + \frac{1}{2} \|\nuhat\|_1\right),
\end{equation*}
since
\begin{equation*}
\|\betastar\|_1 - \|\betatil_\tau\|_1 = \|\betastar_S\|_1 - \|\betatil_{\tau, S}\|_1 - \|\betatil_{\tau, S^c}\|_1 \le \|\nuhat_S\|_1 - \|\nuhat_{S^c}\|_1.
\end{equation*}
This implies that
\begin{equation*}
\|\nuhat_{S^c}\|_1 \le 3 \|\nuhat_S\|_1,
\end{equation*}
which is the cone condition.

Therefore, the RSC condition together with the basic inequality~\eqref{EqnBasic1} implies that
\begin{equation*}
\inprod{\nabla \Loss_n(\betastar)}{\nuhat} + \alpha \|\nuhat\|_2^2 \le \Loss_n(\betahat_{\tau}) - \Loss_n(\betastar) \le \lambda \tau \left(\|\betastar\|_1 - \|\betahat_{\tau}\|_1\right),
\end{equation*}
so combining with inequality~\eqref{EqnBasic2} and the assumptions, we have
\begin{equation*}
\alpha \|\nuhat\|_2^2 \le \lambda \tau \left(\|\nuhat_S\|_1 - \|\nuhat_{S^c}\|_1 + \frac{1}{2} \|\nuhat\|_1\right) \le \frac{3 \lambda \tau}{2} \|\nuhat_S\|_1 \le \frac{3 \lambda \tau \sqrt{k}}{2} \|\nuhat\|_2,
\end{equation*}
implying that
\begin{equation*}
\|\nuhat\|_2 \le \frac{3\lambda \tau \sqrt{k}}{2\alpha},
\end{equation*}
as claimed.


\subsection{Bound on regularization parameter}
\label{AppReg}

We now verify the bound~\eqref{EqnReg}. Note that
\begin{equation*}
\nabla \Loss_n(\betastar) = \frac{1}{n} \sum_{i=1}^n \ell'_\tau\left(\epsilon_i w(x_i)\right) w^2(x_i) x_i.
\end{equation*}
For each $1 \le j \le p$, we have
\begin{equation*}
\left|e_j^T \cdot \ell_\tau'(\epsilon_i w(x_i)) w^2(x_i) x_i\right| \le \tau \|w(x_i) x_i\|_2 \cdot |w(x_i)| \le \tau b'.
\end{equation*}
Furthermore, note that
\begin{equation*}
\E[\ell'_\tau(\epsilon_i w(x_i)) w^2(x_i) x_i] = 0,
\end{equation*}
since
\begin{equation*}
\E[\ell'_\tau(\epsilon_i w(x_i)) \mid x_i] = 0,
\end{equation*}
using the fact that $\ell_\tau$ is an even function and $\epsilon_i$ is independent of $x_i$ and has a symmetric distribution. Hence, a simple application of Hoeffding's inequality, together with a union bound, yields
\begin{equation*}
\|\nabla \Loss_n(\betastar)\|_\infty \le c \tau b_0 \sqrt{\frac{\log p}{n}},
\end{equation*}
with probability at least $1 - c\exp(-c'n)$, provided $n \succsim \log p$. In particular, the choice of regularization parameter $\lambda = 2c b_0 \sqrt{\frac{\log p}{n}}$ ensures that $\|\nabla \Loss_n(\betastar)\|_\infty \le \frac{\lambda \tau}{2}$, w.h.p.

\subsection{RSC condition}
\label{AppRSC}

We now turn to the more challenging task of establishing the RSC condition~\eqref{EqnRSC}. We show that w.h.p., the inequality
\begin{equation*}
\Loss_n(\beta) - \Loss_n(\betastar) - \inprod{\nabla \Loss_n(\betastar)}{\beta - \betastar} \ge \alpha \|\beta - \betastar\|_2^2
\end{equation*}
holds uniformly over the set
\begin{equation*}
\mathbb{C} := \Big\{\beta: \; \|\beta - \betastar\|_2 \le \frac{\tau}{2b_0}, \; \|\beta_{S^c} - \betastar_{S^c}\|_1 \le 3 \|\beta_S - \betastar_S\|_1\Big\}.
\end{equation*}
Defining $\scriptT(\beta, \betastar) := \Loss_n(\beta) - \Loss_n(\betastar) - \inprod{\nabla \Loss_n(\betastar)}{\beta - \betastar}$, we have
\begin{multline*}
\scriptT(\beta, \betastar) = \frac{1}{n} \sum_{i=1}^n \Big\{\ell_\tau\left((x_i^T \beta - y_i)w(x_i)\right) - \ell_\tau\left(\epsilon_i w(x_i)\right) - \ell_\tau'\left(\epsilon_i w(x_i)\right) w(x_i) x_i^T (\beta - \betastar)\Big\} w(x_i).
\end{multline*}
Further note that for $|u_1|, |u_2| \le \tau$, we have
\begin{equation*}
\ell_\tau(u_1) - \ell_\tau(u_2) - \ell'_\tau(u_2) (u_1 - u_2) = \frac{(u_1 - u_2)^2}{2},
\end{equation*}
whereas the convexity of $\ell_\tau$ implies that
\begin{equation*}
\ell_\tau(u_1) - \ell_\tau(u_2) - \ell'_\tau(u_2) (u_1 - u_2) \ge 0, \qquad \forall u_1, u_2 \in \real.
\end{equation*}
Define the events
\begin{equation*}
A_i := \left\{|\epsilon_i| \le \frac{\tau}{2}\right\}, \qquad \forall 1 \le i \le n,
\end{equation*}
and note that when $A_i$ holds, we have
\begin{equation*}
|\epsilon_i w(x_i)| \le |\epsilon_i| \le \frac{\tau}{2},
\end{equation*}
so
\begin{align*}
|(x_i^T \beta - y_i) w(x_i)| & \le |w(x_i) x_i^T(\beta - \betastar)| + |\epsilon_i w(x_i)| \\
& \le \|w(x_i) x_i\|_2 \|\beta - \betastar\|_2 + \frac{\tau}{2} \\
& \le b_0 \cdot \frac{\tau}{2b_0} + \frac{\tau}{2} \\
& \le \tau.
\end{align*}
Thus,
\begin{equation*}
\scriptT(\beta, \betastar) \ge \frac{1}{n} \sum_{i=1}^n \frac{1}{2} \left(w(x_i) x_i^T (\beta - \betastar)\right)^2 w(x_i) 1_{A_i}.
\end{equation*}

We make use of the following result:
\begin{lem*} 
\label{LemConeDev}
[Lemma 12 in Loh and Wainwright~\cite{LohWai11a}]
For a fixed matrix $\Gamma \in \real^{p \times p}$, parameter $s \ge 1$, and tolerance $\delta > 0$, suppose we have the deviation condition
\begin{equation}
\label{EqnDevAssump}
|v^T \Gamma v| \le \delta, \qquad \forall v \in \real^p \text{ s.t. } \|v\|_0 \le 2s, \quad \|v\|_2 \le 1.
\end{equation}
Then
\begin{equation*}
|v^T \Gamma v| \le 27 \delta \left(\|v\|_2^2 + \frac{\|v\|_1^2}{s}\right), \qquad \forall v \in \real^p.
\end{equation*}
\end{lem*}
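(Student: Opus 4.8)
The plan is to leverage the deviation condition~\eqref{EqnDevAssump}, which controls the quadratic form only along $2s$-sparse directions, and to bootstrap it to all of $\real^p$ via a dyadic ``shelling'' of the coordinates of $v$ combined with a polarization argument. Since both sides of the claimed inequality are homogeneous of degree two in $v$, I may assume $\|v\|_2 = 1$. Moreover, because $v^T \Gamma v = v^T \Gamma_{\mathrm{sym}} v$ with $\Gamma_{\mathrm{sym}} := \tfrac12(\Gamma + \Gamma^T)$, and~\eqref{EqnDevAssump} depends on $\Gamma$ only through $a^T \Gamma a = a^T \Gamma_{\mathrm{sym}} a$, I may replace $\Gamma$ by $\Gamma_{\mathrm{sym}}$ and assume without loss of generality that $\Gamma$ is symmetric. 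The first substantive step is then to upgrade~\eqref{EqnDevAssump} to a \emph{bilinear} estimate: for vectors $u, w$ with disjoint supports and $\|u\|_0 + \|w\|_0 \le 2s$, I claim $|u^T \Gamma w| \le \delta \|u\|_2 \|w\|_2$. This follows by applying~\eqref{EqnDevAssump} (in its scale-invariant form $|a^T \Gamma a| \le \delta \|a\|_2^2$ for $2s$-sparse $a$) to the $2s$-sparse vectors $u \pm t w$, expanding the quadratic form using symmetry, subtracting the two bounds, and optimizing over $t > 0$.

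Next I would decompose $v$ by magnitude. Let $T_0$ index the $s$ largest-magnitude coordinates of $v$, let $T_1$ index the next $s$, and so on, writing $v = \sum_{j \ge 0} v_{T_j}$ and setting $a_j := \|v_{T_j}\|_2$. The crucial combinatorial input is the standard relation $\|v_{T_j}\|_2 \le \|v_{T_{j-1}}\|_1 / \sqrt{s}$ for each $j \ge 1$, which holds because every entry of $v_{T_j}$ is bounded in magnitude by the average magnitude of the entries of $v_{T_{j-1}}$. Summing over $j \ge 1$ telescopes to $\sum_{j \ge 1} a_j \le \|v\|_1 / \sqrt{s}$, and together with $a_0 \le \|v\|_2$ this yields $\sum_{j \ge 0} a_j \le \|v\|_2 + \|v\|_1/\sqrt{s}$.

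Finally I would expand $v^T \Gamma v = \sum_{j,k} v_{T_j}^T \Gamma v_{T_k}$ and bound each term. Since distinct blocks have disjoint supports of total size at most $2s$, the bilinear estimate gives $|v_{T_j}^T \Gamma v_{T_k}| \le \delta a_j a_k$, and the diagonal terms $j = k$ are the direct application of~\eqref{EqnDevAssump}. Hence $|v^T \Gamma v| \le \delta \big(\sum_j a_j\big)^2 \le \delta\big(\|v\|_2 + \|v\|_1/\sqrt{s}\big)^2 \le 2\delta\big(\|v\|_2^2 + \|v\|_1^2/s\big)$, which is far stronger than the stated bound with constant $27$; the larger constant in the reference merely reflects cruder bookkeeping and is comfortably absorbed. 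I expect the main obstacle to be organizational rather than deep: one must verify the block-norm inequality carefully (this is the one place the ordering of coordinates enters) and confirm that the antisymmetric part of $\Gamma$ genuinely drops out of the full double sum, so that the symmetrization reduction is lossless.
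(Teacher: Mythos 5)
Your proposal is correct, and every step checks out: the scale-invariant form of~\eqref{EqnDevAssump}, the polarization-plus-optimization argument giving $|u^T \Gamma w| \le \delta \|u\|_2 \|w\|_2$ for disjointly supported vectors with $\|u\|_0 + \|w\|_0 \le 2s$, the sorted-block estimate $\|v_{T_j}\|_2 \le \|v_{T_{j-1}}\|_1/\sqrt{s}$, and the final assembly via $\bigl(\sum_j a_j\bigr)^2 \le 2\bigl(\|v\|_2^2 + \|v\|_1^2/s\bigr)$; the symmetrization reduction is indeed lossless, since both the hypothesis and the conclusion see $\Gamma$ only through its quadratic form. Note, however, that the paper itself does not prove this lemma at all---it quotes it verbatim as Lemma 12 of Loh and Wainwright~\cite{LohWai11a}---so the relevant comparison is with that cited proof, which takes a genuinely different route: it relies on a convex-hull characterization of sparse vectors (the set $\{u : \|u\|_2 \le 1, \; \|u\|_1 \le \sqrt{s}\}$ is contained in $3$ times the closed convex hull of $\{u : \|u\|_0 \le s, \; \|u\|_2 \le 1\}$), expands the quadratic form over the resulting convex combination, bounds each cross term by $3\delta$ via polarization, and then normalizes $v$ by $\sqrt{\|v\|_2^2 + \|v\|_1^2/s}$; the constant $27 = 3^2 \cdot 3$ is precisely the hull factor squared times the polarization constant. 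Your dyadic shelling replaces the hull lemma with elementary bookkeeping and yields the sharper constant $2$ in place of $27$, which is strictly stronger and serves equally well in the two places the paper invokes the lemma (Appendices~\ref{AppRSC} and~\ref{AppLemSigma}), so your argument is a legitimate, self-contained, and quantitatively better substitute for the citation.
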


We will apply the lemma to the matrix
\begin{equation*}
\Gamma = \frac{1}{n} \sum_{i=1}^n \frac{w^3(x_i)}{2} 1_{A_i} \cdot x_i x_i^T - \E\left[\frac{w^3(x_i)}{2} 1_{A_i} \cdot x_i x_i^T\right],
\end{equation*}
with $s = k$ and $\delta = C_\delta \lambda_{\min}\left(\E\left[\frac{w^3(x_i)}{2} x_i x_i^T\right]\right)$, for a value of $C_\delta$ specified below. (We will verify the deviation condition~\eqref{EqnDevAssump} momentarily.)

Denoting $\Delta := \beta - \betastar$, we then have
\begin{multline*}
\frac{1}{n} \sum_{i=1}^n \frac{1}{2} \left(w(x_i) x_i^T \Delta \right)^2 w(x_i) 1_{A_i} \ge \E\left[\frac{w^3(x_i)}{2} 1_{A_i} \left(x_i^T \Delta \right)^2\right] - 27\delta \left(\|\Delta\|_2^2 + \frac{\|\Delta\|_1^2}{k}\right),
\end{multline*}
uniformly over all $\Delta \in \real^p$. Now note that for any $\Delta$, we have
\begin{align*}
\E\left[\frac{w^3(x_i)}{2} 1_{A_i} \left(x_i^T \Delta \right)^2\right] & = \E\left[\frac{w^3(x_i)}{2} (x_i^T \Delta)^2\right] \cdot \mprob\left(|\epsilon_i| \le \frac{\tau}{2}\right) \\
& \ge \lambda_{\min}\left(\E\left[\frac{w^3(x_i)}{2} x_i x_i^T\right]\right) \|\Delta\|_2^2 \cdot \left(1 - \frac{(\sigmastar)^2}{\tau^2/4}\right) \\
& \ge \frac{5}{9}  \lambda_{\min}\left(\E\left[\frac{w^3(x_i)}{2} x_i x_i^T\right]\right) \|\Delta\|_2^2,
\end{align*}
where we have used Chebyshev's inequality and the fact that $\sigmastar \le \frac{\tau}{3}$. Furthermore, for $\Delta \in \mathbb{C}$, we have
\begin{equation*}
\|\Delta\|_1 = \|\Delta_S\|_1 + \|\Delta_{S^c}\|_1 \le 4 \|\Delta_S\|_1 \le 4 \sqrt{k} \|\Delta_S\|_2,
\end{equation*}
so
\begin{equation*}
\|\Delta\|_2^2 + \frac{\|\Delta\|_1^2}{k} \le 17 \|\Delta\|_2^2.
\end{equation*}
It follows that
\begin{align*}
\frac{1}{n} \sum_{i=1}^n \frac{1}{2} \left(w(x_i) x_i^T \Delta \right)^2 w(x_i) 1_{A_i} & \ge \frac{5}{9} \lambda_{\min}\left(\E\left[\frac{w^3(x_i)}{2} x_i x_i^T\right]\right) \|\Delta\|_2^2  \\
& \qquad \qquad \qquad - 459\delta \|\Delta\|_2^2 \\
& \ge \frac{5}{18} \lambda_{\min}\left(\E\left[\frac{w^3(x_i)}{2} x_i x_i^T\right]\right) \|\Delta\|_2^2,
\end{align*}
for the choice $C_\delta = \frac{5}{18 \cdot 459}$.

Finally, note that the bound~\eqref{EqnDevAssump} in the hypothesis of Lemma~\ref{LemConeDev} holds,
w.h.p. Indeed, for $\|v\|_2 \le 1$, the quantity $v^T \Gamma v$ is the recentered average of i.i.d.\ bounded random variables, where
\begin{equation*}
\left|\frac{w^3(x_i)}{2} 1_{A_i} \cdot (x_i^T v)^2\right| \le \frac{1}{2} \cdot \|w(x_i) x_i\|_2^2 \|v\|_2^2 \le (b')^2,
\end{equation*}
so Hoeffding's inequality, along with an $\epsilon$-net argument over $2k$-dimensional subspaces and union bound over the $\binom{p}{2k}$ choices of the support set, implies that
\begin{multline}
\label{EqnCovering}
\mprob\left(|v^T \Gamma v| \le \delta, \quad \forall v \in \real^p \text{ s.t. } \|v\|_0 \le 2k, \; \|v\|_2 \le 1\right) \ge 1 - 2\exp\left(-\frac{c\delta n}{(b')^2} + 2k \log p\right)
\end{multline}
(cf.\ Lemma 15 in Loh and Wainwright~\cite{LohWai11a}). Hence, provided $n \succsim \frac{(b')^2}{\lambda_{\min}\left(\E\left[\frac{w^3(x_i)}{2} x_i x_i^T\right]\right)} k \log p$, we have the desired uniform deviation bound with probability at least $1 - 2\exp(-c'n)$. Combining the results, we obtain the desired RSC condition~\eqref{EqnRSC} with $\alpha = \frac{5}{18} \lambda_{\min}\left(\E\left[\frac{w^3(x_i)}{2} x_i x_i^T\right]\right)$, with probability at least $1 - 2\exp(-c'n)$.

\subsection{Conclusion of proof}

Altogether, we conclude that for $\tau \ge 3 \sigmastar$, we have
\begin{equation*}
\|\betatil_{\tau} - \betastar\|_2 \le \frac{Cb' \tau}{\lambda_{\min}\left(\E\left[\frac{w^3(x_i)}{2} x_i x_i^T\right]\right)} \sqrt{\frac{k\log p}{n}},
\end{equation*}
with probability at least $1 - c \exp(-c'n)$. Further note that for $n \succsim k \log p$, we are guaranteed that
\begin{equation*}
\frac{Cb' \tau}{\lambda_{\min}\left(\E\left[\frac{w^3(x_i)}{2} x_i x_i^T\right]\right)} \sqrt{\frac{k\log p}{n}} < \frac{\tau}{2b'}.
\end{equation*}
It follows that $\betatil_\tau$ is in the interior of the region $\left\{\beta: \; \|\beta - \betastar\|_2 \le \frac{\tau}{2b'}\right\}$, so $\betatil_\tau$ must also be a global optimum of the regularized Huber estimator~\eqref{EqnHuberReg} that does not include the side constraint; furthermore, any optima of the unconstrained problem must also lie in the interior of the constraint set.

This concludes the proof of the theorem.


\section{Proofs of additional theorems}

In this appendix, we provide the proofs of the main results in the paper, with proofs of supporting lemmas supplied in later appendices.

\subsection{Proof of Theorem~\ref{ThmLepski}}
\label{AppThmLepski}

Let $j' = \min\left\{j \in \scriptJ: \sigma_j \ge \sigmastar\right\}$. Then $\sigma_{j'} \le 2\sigmastar$. We have
\begin{align*}
\mprob(j_* > j') & = \mprob\Bigg(\bigcup_{i \in \scriptJ: i > j'} \left\{\|\betahat_{(i)} - \betahat_{(j')}\|_2 > 6C\sigma_i \sqrt{\frac{k \log p}{n}}\right\} \\
& \qquad \qquad \bigcup_{i \in \scriptJ: i > j'} \left\{\|\betahat_{(i)} - \betahat_{(j')}\|_1 > 24 C \sigma_i k \sqrt{\frac{\log p}{n}}\right\}\Bigg) \\
& \le \mprob\Bigg(\|\betahat_{(j')} - \betastar\|_2 > 3C\sigma_{j'} \sqrt{\frac{k \log p}{n}}, \\
& \qquad \qquad \|\betahat_{(j')} - \betastar\|_1 > 12 C \sigma_{j'} k \sqrt{\frac{\log p}{n}}\Bigg) \\
& \qquad + \sum_{i \in \scriptJ: i > j'} \mprob\Bigg(\|\betahat_{(i)} - \betastar\|_2 > 3C \sigma_i \sqrt{\frac{k \log p}{n}}, \\
& \qquad \qquad \qquad \qquad \|\betahat_{(i)} - \betastar\|_1 > 12 C \sigma_i k \sqrt{\frac{\log p}{n}}\Bigg) \\
& \le c \exp(-c'n) + \log_2\left(\frac{2\sigma_{\max}}{\sigma_{\min}}\right) \cdot c\exp(-c'n),
\end{align*}
where we have used Theorem~\ref{ThmHubOutput} and a union bound in the final inequality.

Hence, with probability at least $1-\log_2\left(\frac{4\sigma_{\max}}{\sigma_{\min}}\right) \cdot c\exp(-c'n)$, we have $j' \ge j_*$ and the bounds
\begin{align*}
\|\betahat_{(j')} - \betastar\|_2 & \le 3C\sigma_{j'} \sqrt{\frac{k \log p}{n}}, \\
\|\betahat_{(j')} - \betastar\|_1 & \le 12C \sigma_{j'} k \sqrt{\frac{\log p}{n}}.
\end{align*}
It follows that
\begin{align*}
\|\betahat_{(j_*)} - \betastar\|_2 & \le \|\betahat_{(j_*)} - \betahat_{(j')}\|_2 + \|\betahat_{(j')} - \betastar\|_2 \\
& \le 6C\sigma_{j'} \sqrt{\frac{k \log p}{n}} + 3C\sigma_{j'} \sqrt{\frac{k \log p}{n}} \\
& \le 9C \sigma_{j'} \sqrt{\frac{k \log p}{n}} \\
& \le 18C \sigma^* \sqrt{\frac{k \log p}{n}},
\end{align*}
using the fact that $\sigma_{j'} \le 2\sigmastar$ in the final inequality.

Similarly, we have the bound
\begin{equation*}
\|\betahat_{j_+} - \betastar\|_1 \le 72C \sigma^* k \sqrt{\frac{\log p}{n}}.
\end{equation*}


\subsection{Proof of Theorem~\ref{ThmAsympNorm}}
\label{AppThmAsympNorm}

We write
\begin{align}
\label{EqnPlum}
\sqrt{n}(\bhat_\psi - \betastar) & = \sqrt{n} (\betahat - \betastar) + \frac{\Thetahat}{\Ahat(\psi)} \cdot \frac{1}{\sqrt{n}} \sum_{i=1}^n \psi \left(\frac{y_i - x_i^T \betahat}{\sigmahat}\right) x_i \notag \\
& = \frac{\Thetahat}{\Ahat(\psi)} \cdot \frac{1}{\sqrt{n}} \sum_{i=1}^n \psi \left(\frac{\epsilon_i}{\sigmastar}\right) x_i + \sqrt{n} \Bigg\{(\betahat - \betastar) \notag \\
& \quad + \frac{\Thetahat}{\Ahat(\psi)} \cdot \frac{1}{n} \sum_{i=1}^n \left(\psi \left(\frac{y_i - x_i^T \betahat}{\sigmahat}\right) - \psi\left(\frac{y_i - x_i^T \betastar}{\sigmastar}\right)\right) x_i\Bigg\} \notag \\
& \qquad := I + II.
\end{align}

We first consider the term $I = \frac{\Thetahat}{\Ahat(\psi)} \cdot \frac{1}{\sqrt{n}} \sum_{i=1}^n \psi\left(\frac{\epsilon_i}{\sigmastar}\right) x_i$, which we claim is asymptotically normal. We have
\begin{align*}
& \left\|P_J\left(\frac{\Thetahat}{\Ahat(\psi)} - \frac{\Theta_x}{A(\psi)} \right) \frac{1}{\sqrt{n}} \sum_{i=1}^n \psi\left(\frac{\epsilon_i}{\sigmastar}\right) x_i\right\|_\infty \\
& \qquad \le \left\|\left(\frac{\Thetahat}{\Ahat(\psi)} - \frac{\Theta_x}{A(\psi)} \right) \frac{1}{\sqrt{n}} \sum_{i=1}^n \psi\left(\frac{\epsilon_i}{\sigmastar}\right) x_i\right\|_\infty \\
& \qquad \le \opnorm{\frac{\Thetahat}{\Ahat(\psi)} - \frac{\Theta_x}{A(\psi)}}_1 \cdot \left\|\frac{1}{\sqrt{n}} \sum_{i=1}^n \psi\left(\frac{\epsilon_i}{\sigmastar}\right) x_i\right\|_\infty.
\end{align*}
The second factor is asymptotically $\order(\sqrt{\log p})$, by appealing to Lemma~\ref{LemConcX}, which approximates the term as a supremum of $p$ Gaussian random variables. To handle the first factor, we write
\begin{align}
\label{EqnThetaRatio}
\opnorm{\frac{\Thetahat}{\Ahat(\psi)} - \frac{\Theta_x}{A(\psi)}}_1 & \le \opnorm{\frac{1}{A(\psi)} \left(\Thetahat - \Theta_x\right)}_1 + \opnorm{\left(\frac{1}{\Ahat(\psi)} - \frac{1}{A(\psi)}\right) \Theta_x}_1 \notag \\
& \qquad \qquad \qquad \qquad + \opnorm{\left(\frac{1}{\Ahat(\psi)} - \frac{1}{A(\psi)}\right)\left(\Thetahat - \Theta_x\right)}_1 \notag \\
& \le \frac{1}{|A(\psi)|} \opnorm{\Thetahat - \Theta_x}_1 + \left|\frac{1}{\Ahat(\psi)} - \frac{1}{A(\psi)}\right| \opnorm{\Theta_x}_1 \notag \\
& \qquad \qquad \qquad \qquad \qquad + \left|\frac{1}{\Ahat(\psi)} - \frac{1}{A(\psi)}\right| \opnorm{\Thetahat - \Theta_x}_1 \notag \\
& = \order\left(\frac{k \log p}{\sqrt{n}}\right),
\end{align}
where the final inequality leverages the conditions~\eqref{EqnACond} and~\eqref{EqnThetaCond}.

Together with the convergence statement~\eqref{EqnConvDist}, we conclude that $I$ has the desired asymptotic normality property, since $\order\left(\frac{k \log p}{\sqrt{n}}\right) \cdot \order(\sqrt{\log p}) = o_\mprob(1)$, assuming the scaling $n \succsim k^2 \log^3 p$.

We now shift our attention to term $II$ on the right-hand side of equation~\eqref{EqnPlum}. By Taylor's theorem applied to each summand, we have
\begin{multline*}
\frac{1}{n} \sum_{i=1}^n \left(\psi\left(\frac{y_i - x_i^T \betahat}{\sigmahat}\right) - \psi\left(\frac{y_i - x_i^T \betastar}{\sigmastar}\right)\right) x_i \\
=  - \frac{1}{n} \sum_{i=1}^n \psi'\left(\frac{\epsilon_i}{\sigmastar}\right) \deltahat_i x_i + \frac{1}{2n} \sum_{i=1}^n \psi''(\widehat{t}_i) \deltahat_i^2 x_i,
\end{multline*}
where $\deltahat_i := \frac{y_i - x_i^T \betahat}{\sigmahat} - \frac{y_i - x_i^T \betastar}{\sigmastar}$ and $\widehat{t}_i$ lies on the segment between $\frac{y_i - x_i^T \betastar}{\sigmastar}$ and $\frac{y_i - x_i^T \betahat}{\sigmahat}$. We have the bound
\begin{equation*}
\sqrt{n} \left\|\frac{\Thetahat}{\Ahat(\psi)} \cdot \frac{1}{n} \sum_{i=1}^n \frac{\psi''(\widehat{t}_i)}{2} \deltahat_i^2 x_i\right\|_\infty \le \frac{\sqrt{n}}{2} \opnorm{\frac{\Thetahat}{\Ahat(\psi)}}_1 \cdot \left\|\frac{1}{n} \sum_{i=1}^n \psi''(\widehat{t}_i) \deltahat_i^2 x_i\right\|_\infty,
\end{equation*}
and
\begin{align*}
\left\|\frac{1}{n} \sum_{i=1}^n \psi''(\widehat{t}_i) \deltahat_i^2 x_i\right\|_\infty & \le \|\psi''\|_\infty \left| \frac{1}{n} \sum_{i=1}^n \deltahat_i^2\right| \cdot \|X\|_{\max} \\
& = \order\left(\frac{k \log p}{n} \cdot \sqrt{\log p}\right),
\end{align*}
using the same argument employed to bound the term $B_3$ in the proof of Lemma~\ref{LemApsi} and the bound on $\|X\|_{\max}$ from Lemma~\ref{LemXmax}.
%
%
Altogether, we have the bound
\begin{equation*}
\sqrt{n} \left\|\frac{\Thetahat}{\Ahat(\psi)} \cdot \frac{1}{n} \sum_{i=1}^n \frac{\psi''(\widehat{t}_i)}{2} \deltahat_i^2 x_i\right\|_\infty = \order\left(k \sqrt{\frac{\log^3 p}{n}}\right) = \order_\mprob(1).
\end{equation*}

%
%
%

Finally, note that using the expansion $\deltahat_i = \frac{x_i(\betastar - \betahat)}{\sigmahat} + \epsilon_i \left(\frac{1}{\sigmahat} - \frac{1}{\sigmastar}\right)$, we have
\begin{align}
\label{EqnTomato}
& \left\|(\betahat - \betastar) - \frac{\Thetahat}{\Ahat(\psi)} \frac{1}{n} \sum_{i=1}^n \psi'\left(\frac{\epsilon_i}{\sigmastar}\right) \deltahat_i x_i\right\|_\infty \notag \\
& \le \left\|\left(I - \frac{\Thetahat}{\sigmahat \Ahat(\psi)} \left(\frac{1}{n} \sum_{i=1}^n \psi'\left(\frac{\epsilon_i}{\sigmastar}\right) x_i x_i^T \right)\right) (\betahat - \betastar)\right\|_\infty \notag \\
& \qquad + \left|\frac{1}{\sigmahat} - \frac{1}{\sigmastar}\right| \cdot \left\|\frac{1}{n} \sum_{i=1}^n \psi'\left(\frac{\epsilon_i}{\sigmastar}\right) \epsilon_i x_i\right\|_\infty \notag \\
& := A_1 + A_2.
\end{align}
We have
\begin{align*}
A_1 & \le \opnorm{\frac{\Thetastar}{\sigmastar A(\psi)} - \frac{\Thetahat}{\sigmahat \Ahat(\psi)}}_1 \cdot \left\|\sigmastar A(\psi) \Sigmastar - \frac{1}{n} \sum_{i=1}^n \psi'\left(\frac{\epsilon_i}{\sigmastar}\right) x_i x_i^T\right\|_{\max} \\
& \qquad \cdot \|\betahat - \betastar\|_1.
\end{align*}
By inequality~\eqref{EqnThetaRatio} and Lemma~\ref{LemSigma}, we know that the first factor is $\order\left(\frac{k \log p}{\sqrt{n}}\right)$. Furthermore, applying Lemma~\ref{LemMaxGauss}, and using Assumptions (A1)--(A3) and the fact that $\|\psi'\|_\infty < \infty$, the second factor is $\order\left(\sqrt{\frac{\log p}{n}}\right)$, w.h.p. (This is a close analog of Lemma~\ref{LemCovX}.) Hence, we conclude that
\begin{equation*}
A_1 = \order\left(\frac{k \log p}{\sqrt{n}} \cdot \sqrt{\frac{\log p}{n}} \cdot k \sqrt{\frac{\log p}{n}}\right),
\end{equation*}
which is $o_\mprob(1)$ under the assumed scaling. Next, we bound
\begin{equation*}
A_2 = \order\left(\frac{k \log p}{\sqrt{n}} \cdot \sqrt{\frac{\log p}{n}}\right) = o_\mprob(1),
\end{equation*}
using Lemma~\ref{LemConcX}. The desired result then follows.


\subsection{Proof of Theorem~\ref{ThmEfficient}}
\label{AppThmEfficient}

By Theorem~\ref{ThmAsympNorm}, the asymptotic variance of $\sqrt{n} (\bhat_\psi - \betastar)_J$ is equal to
\begin{equation*}
V_J = \frac{\E[\psi^2(\epsilon_i/\sigmastar)]}{\E[\psi'(\epsilon/\sigmastar)/\sigmastar]^2} \cdot (\Theta_x)_{JJ}.
\end{equation*}
We simply need to note that
\begin{equation*}
(\Theta_x)_{JJ} = \Big(\E\Big[\Big((x_i)_J - \E\left[(x_i)_J \mid (x_i)_{J^c}\right]\Big) \Big((x_i)_J - \E\left[(x_i)_J \mid (x_i)_{J^c}\right]\Big)\Big]\Big)^{-1},
\end{equation*}
so it suffices to prove the equivalence of the terms
\begin{align*}
V_1 & := \left(\E\left[\left(\frac{f_{\sigmastar}'(\epsilon_i)}{f_{\sigmastar}(\epsilon_i)}\right)^2\right]\right)^{-1}, \\
V_2 & := \frac{\E\left[\psi^2\left(\frac{\epsilon_i}{\sigmastar}\right)\right]}{\E\left[\frac{1}{\sigmastar}\psi'\left(\frac{\epsilon_i}{\sigmastar}\right)\right]^2},
\end{align*}
where $f_{\sigmastar}$ denotes the pdf of $\epsilon_i$. Taking $f$ to be the pdf of $\frac{\epsilon_i}{\sigmastar}$, we have
\begin{equation*}
f_{\sigmastar}(t) = \frac{1}{\sigmastar} f\left(\frac{t}{\sigmastar}\right), \quad \text{and} \quad f_{\sigmastar}'(t) = \frac{1}{(\sigmastar)^2} f'\left(\frac{t}{\sigmastar}\right),
\end{equation*}
so
\begin{equation*}
V_1 = \left(\E\left[\frac{1}{(\sigmastar)^2} \left(\frac{f'\left(\frac{\epsilon_i}{\sigmastar}\right)}{f\left(\frac{\epsilon_i}{\sigmastar}\right)}\right)^2\right]\right)^{-1}.
\end{equation*}
Furthermore, differentiating the equation $\psi(t) = \frac{f'(t)}{f(t)}$, we have
\begin{equation*}
\psi'(t) = \frac{f(t) f''(t) - (f'(t))^2}{(f(t))^2},
\end{equation*}
so
\begin{equation*}
V_2 = \frac{\E\left[\left(\frac{f'\left(\frac{\epsilon_i}{\sigmastar}\right)}{f\left(\frac{\epsilon_i}{\sigmastar}\right)}\right)^2\right]}{\left(\E\left[\frac{1}{\sigmastar} \cdot \frac{f\left(\frac{\epsilon_i}{\sigmastar}\right) f''\left(\frac{\epsilon_i}{\sigmastar}\right) - \left(f'\left(\frac{\epsilon_i}{\sigmastar}\right)\right)^2}{\left(f\left(\frac{\epsilon_i}{\sigmastar}\right)\right)^2}\right]\right)^2}.
\end{equation*}
Furthermore, the square root of the term in the denominator is equal to
\begin{align*}
\frac{1}{\sigmastar} \cdot \E\left[\frac{f(\epsilon) f''(\epsilon) - f'(\epsilon) f'(\epsilon)}{f(\epsilon)^2}\right] & = \int_{-\infty}^\infty f''(t) dt - \int_{-\infty}^\infty \frac{f'(t) f'(t)}{f(t)} dt \\
& = \left[f'(t)\right]_{-\infty}^\infty - \int_{-\infty}^\infty \frac{f'(t) f'(t)}{f(t)} dt \\
& = - \E\left[\left(\frac{f'\left(\frac{\epsilon_i}{\sigmastar}\right)}{f\left(\frac{\epsilon_i}{\sigmastar}\right)}\right)^2\right],
\end{align*}
from which we conclude that $V_1 = V_2$. Thus, we have $V_J = \widebar{V}$ as well, implying the desired property of asymptotic efficiency.

\subsection{Proof of Theorem~\ref{ThmConfReg}}
\label{AppThmConfReg}

The proof follows in a straightforward manner from Theorem~\ref{ThmAsympNorm}, which establishes the weak convergence statement
\begin{equation*}
\sqrt{n} P_J(\bhat_\psi - \betastar) \stackrel{d}{\longrightarrow} \frac{\sqrt{\E[\psi^2(\epsilon_i/\sigmastar)]}}{A(\psi)} \cdot \left((\Theta_x)_{JJ}\right)^{1/2} Z,
\end{equation*}
where $Z \sim N(0, I_m)$. Rearranging, we have
\begin{equation*}
\frac{\sqrt{n} A(\psi)}{\sqrt{\E[\psi^2(\epsilon_i/\sigmastar)]}} \cdot \left((\Theta_x)_{JJ}\right)^{-1/2} \cdot P_J(\bhat_\psi - \betastar) \stackrel{d}{\longrightarrow} Z.
\end{equation*}
We then use the following lemma, proved in Appendix~\ref{AppLemSlutsky}:
\begin{lem*}
\label{LemSlutsky}
Under the assumptions of the theorem, we have
\begin{multline*}
\frac{\Ahat(\psi)}{\sqrt{\frac{1}{n} \sum_{i=1}^n \psi^2\left((y_i - x_i^T\betahat)/\sigmahat\right)}} \cdot \frac{\sqrt{\E[\psi^2(\epsilon_i/\sigmastar)]}}{A(\psi)} \cdot \left(\Thetahat_{JJ}\right)^{-1/2} \cdot \left((\Theta_x)_{JJ}\right)^{1/2} \stackrel{\mprob}{\longrightarrow} 1.
\end{multline*}
\end{lem*}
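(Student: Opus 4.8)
The plan is to reduce the claimed convergence to three separate consistency statements and then invoke the continuous mapping theorem together with Slutsky's lemma. The left-hand side factors as a product of a scalar,
\[
\frac{\Ahat(\psi)}{\sqrt{\frac{1}{n}\sum_{i=1}^n \psi^2\big((y_i - x_i^T\betahat)/\sigmahat\big)}}\cdot \frac{\sqrt{\E[\psi^2(\epsilon_i/\sigmastar)]}}{A(\psi)},
\]
and a matrix $(\Thetahat_{JJ})^{-1/2}\big((\Theta_x)_{JJ}\big)^{1/2}$. Since $J$ has fixed cardinality $m$, and since $A(\psi)\neq 0$, $\E[\psi^2(\epsilon_i/\sigmastar)]>0$, and $(\Theta_x)_{JJ}\succ 0$ (all required for the limit law of Theorem~\ref{ThmAsympNorm} to be nondegenerate), it suffices to establish: (i) $\Ahat(\psi)\stackrel{\mprob}{\longrightarrow}A(\psi)$; (ii) $\frac{1}{n}\sum_{i=1}^n \psi^2\big((y_i - x_i^T\betahat)/\sigmahat\big)\stackrel{\mprob}{\longrightarrow}\E[\psi^2(\epsilon_i/\sigmastar)]$; and (iii) $\Thetahat_{JJ}\stackrel{\mprob}{\longrightarrow}(\Theta_x)_{JJ}$. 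Given these, the scalar tends to $1$, the matrix inverse-square-root map is continuous at the positive-definite limit so $(\Thetahat_{JJ})^{-1/2}\to((\Theta_x)_{JJ})^{-1/2}$, and hence the matrix factor tends to $I_m$.

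Claims (i) and (iii) are essentially immediate from the hypotheses of Theorem~\ref{ThmAsympNorm}. Condition~\eqref{EqnACond} gives $|\Ahat(\psi)-A(\psi)|=\order(k\log p/\sqrt{n})=o_\mprob(1)$ under the scaling $n\succsim k^2\log^3 p$, which is (i). For (iii), condition~\eqref{EqnThetaCond} (equivalently Lemma~\ref{LemGLasso}) gives $\opnorm{\Thetahat-\Theta_x}_1=\order(k\sqrt{\log p/n})=o_\mprob(1)$; since each entry of $\Thetahat_{JJ}-(\Theta_x)_{JJ}$ is dominated by $\opnorm{\Thetahat-\Theta_x}_1$ and $m$ is fixed, this yields (iii) (and in particular $\Thetahat_{JJ}\succ 0$ w.h.p., so its inverse square root is well-defined).

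The substance lies in (ii), whose difficulty is that the summands are evaluated at the estimated residuals $y_i-x_i^T\betahat$ and the estimated scale $\sigmahat$ rather than at $\epsilon_i$ and $\sigmastar$. First I would write $\frac{1}{n}\sum_i\psi^2((y_i-x_i^T\betahat)/\sigmahat)=\frac{1}{n}\sum_i\psi^2(\epsilon_i/\sigmastar)+R_n$, where the first average converges to $\E[\psi^2(\epsilon_i/\sigmastar)]$ by a law of large numbers justified by the sub-exponential moment bound on $\psi^2$ in assumption (B2). Setting $\deltahat_i := \frac{y_i-x_i^T\betahat}{\sigmahat}-\frac{\epsilon_i}{\sigmastar}=\frac{x_i^T(\betastar-\betahat)}{\sigmahat}+\epsilon_i\big(\frac{1}{\sigmahat}-\frac{1}{\sigmastar}\big)$ as in the proof of Theorem~\ref{ThmAsympNorm}, a second-order Taylor expansion gives
\[
R_n=\frac{1}{n}\sum_{i=1}^n (\psi^2)'\Big(\frac{\epsilon_i}{\sigmastar}\Big)\deltahat_i+\frac{1}{2n}\sum_{i=1}^n(\psi^2)''(\widehat{t}_i)\deltahat_i^2 .
\]
The quadratic remainder is at most $\tfrac12\|(\psi^2)''\|_\infty\cdot\frac1n\sum_i\deltahat_i^2$, which is $\order(k\log p/n)=o_\mprob(1)$ by the bound on $\frac1n\sum_i\deltahat_i^2$ already used for the term $B_3$ in Lemma~\ref{LemApsi}; here the hypothesis $\|(\psi^2)''\|_\infty<\infty$ of Theorem~\ref{ThmConfReg} is exactly what is required.

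For the linear term I would split $\frac1n\sum_i(\psi^2)'(\epsilon_i/\sigmastar)\deltahat_i$ as
\[
\frac{1}{\sigmahat}\Big\langle \frac1n\sum_i(\psi^2)'\Big(\frac{\epsilon_i}{\sigmastar}\Big)x_i,\ \betastar-\betahat\Big\rangle+\Big(\frac{1}{\sigmahat}-\frac{1}{\sigmastar}\Big)\frac1n\sum_i(\psi^2)'\Big(\frac{\epsilon_i}{\sigmastar}\Big)\epsilon_i .
\]
The first piece is bounded by Hölder, using $\|\frac1n\sum_i(\psi^2)'(\epsilon_i/\sigmastar)x_i\|_\infty=\order(\sqrt{\log p/n})$ w.h.p.\ times $\|\betahat-\betastar\|_1=\order(k\sqrt{\log p/n})$ from Theorem~\ref{ThmLepski}, giving $\order(k\log p/n)=o_\mprob(1)$; the second piece is $o_\mprob(1)$ since $|\sigmahat-\sigmastar|=o_\mprob(1)$ (consistency of the plug-in scale estimate built from $\betahat_{\Lep}$) while the empirical average is $\order_\mprob(1)$. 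The hard part is verifying the maximal bound on $\|\frac1n\sum_i(\psi^2)'(\epsilon_i/\sigmastar)x_i\|_\infty$: since $(\psi^2)'=2\psi\psi'$, each centered coordinate $2\psi(\epsilon_i/\sigmastar)\psi'(\epsilon_i/\sigmastar)x_{ij}$ must be shown to have sub-exponential tails, which is precisely where the stronger assumption (B3) — controlling the mgf of $\psi^2(\psi')^2$ — rather than (B2) is needed; combined with (A2), a Bernstein bound and a union bound over the $p$ coordinates yield the $\sqrt{\log p/n}$ rate. Assembling these estimates gives $R_n=o_\mprob(1)$ and hence (ii); combining (i)--(iii) through Slutsky completes the proof.
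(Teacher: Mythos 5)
Your proposal is correct and follows essentially the same route as the paper's proof: reduce the claim to the three convergences $\Ahat(\psi)/A(\psi) \stackrel{\mprob}{\to} 1$, consistency of the empirical average of $\psi^2$ at the fitted residuals, and $\Thetahat_{JJ} \to (\Theta_x)_{JJ}$, then handle the middle one by Taylor-expanding $\psi^2$ exactly as in the bound on term $B$ in Lemma~\ref{LemApsi}, using $(\psi^2)' = 2\psi\psi'$, the bound $\|(\psi^2)''\|_\infty < \infty$ for the quadratic remainder, and assumption (B3) for the term $\left\|\frac{1}{n}\sum_{i=1}^n \psi\left(\frac{\epsilon_i}{\sigmastar}\right)\psi'\left(\frac{\epsilon_i}{\sigmastar}\right)x_i\right\|_\infty$. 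The only difference is cosmetic: you obtain that $\ell_\infty$ bound via a Bernstein inequality plus a union bound, whereas the paper routes it through its Gaussian-approximation-of-maxima result (Lemma~\ref{LemConcX}, via Lemma~\ref{LemMaxGauss}); both are valid under the stated moment-generating-function assumptions, and your treatment of the cross term $\left(\frac{1}{\sigmahat}-\frac{1}{\sigmastar}\right)\frac{1}{n}\sum_{i=1}^n \psi\psi'\,\epsilon_i$ as $o_\mprob(1)\cdot\order_\mprob(1)$ is in fact slightly more careful than the paper's.
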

Hence, by Slutsky's theorem, we also have
\begin{equation*}
\frac{\sqrt{n} \Ahat(\psi)}{\sqrt{\frac{1}{n} \sum_{i=1}^n \psi^2\left((y_i - x_i^T \betahat)/\sigmahat\right)}} \cdot \left(\Thetahat_{JJ}\right)^{-1/2} \cdot P_J(\bhat_\psi - \betastar) \stackrel{d}{\longrightarrow} Z.
\end{equation*}

Combined with equation~\eqref{EqnScaleBox}, we then have
\begin{multline*}
\lim_{n,p,k \rightarrow \infty} \mprob\Bigg(\frac{\sqrt{n} \Ahat(\psi)}{\sqrt{\frac{1}{n} \sum_{i=1}^n \psi^2\left((y_i - x_i^T \betahat)/\sigmahat\right)}} \cdot \left(\Thetahat_{JJ}\right)^{-1/2} \cdot P_J(\bhat_\psi - \betastar) \in \scriptB_{\alpha, J}\Bigg) = 1-\alpha.
\end{multline*}
Rearranging the argument inside the probability expression yields the desired result.


\section{Proofs of technical lemmas}
\label{AppEfficiency}

In this appendix, we provide the technical proofs of the remaining results used in the proofs of the main theorems.

\subsection{Proof of Lemma~\ref{LemSigma}}
\label{AppLemSigma}

We begin by writing
\begin{multline*}
|\sigmahat - \sigmastar| \le \left|\frac{1}{n} \sum_{i=1}^n (y_i - x_i^T \betahat)^2 - \frac{1}{n} \sum_{i=1}^n (y_i - x_i^T \betastar)^2\right| + \left|\frac{1}{n} \sum_{i=1}^n (y_i - x_i^T \betastar)^2 - \E[\epsilon_i^2]\right|.
\end{multline*}
We can bound the second term by $\order\left(\frac{\sigmastar}{\sqrt{n}}\right)$ via Chebyshev's inequality. Expanding and using the triangle inequality, we may bound the first term as
\begin{align*}
& \left|\frac{1}{n} \sum_{i=1}^n \left( \left(x_i^T (\betastar - \betahat) + \epsilon_i\right)^2 - \epsilon_i^2\right)\right|  \\
& \qquad \le \frac{1}{n} \left|\sum_{i=1}^n \left(x_i^T (\betahat - \betastar)\right)^2\right| + \frac{2}{n} \left|\sum_{i=1}^n \left(x_i^T (\betahat - \betastar)\right) \epsilon_i\right| \\
& \qquad \le (\betahat - \betastar)^T \Sigmahat (\betahat - \betastar) + 2 \left\|\frac{X^T\epsilon}{n}\right\|_\infty \|\betahat - \betastar\|_1.
\end{align*}
Using Lemma~\ref{LemConeDev} in Appendix~\ref{AppRSC} with $\Gamma = \Sigmahat - \Sigmastar$, we have
\begin{equation}
\label{EqnCovBound}
\left|(\betahat - \betastar)^T \Sigmahat (\betahat - \betastar)\right| \le C \|\betahat - \betastar\|_2^2,
\end{equation}
w.h.p., using Lemma~\ref{LemCovering} in Appendix~\ref{AppUseful} to verify the deviation condition. Furthermore, we have
\begin{equation*}
\left\|\frac{X^T \epsilon}{n}\right\|_\infty \le \|X\|_{\max} \cdot \frac{\|\epsilon\|_1}{n} = \order(\sqrt{\log p}) \cdot \left(\E[|\epsilon_i|] + \order\left(\frac{1}{\sqrt{n}}\right)\right),
\end{equation*}
using a Chebyshev argument and Lemma~\ref{LemXmax}. Altogether, we conclude that the expression is upper-bounded by
\begin{equation*}
\order\left(\frac{k \log p}{n}\right) + \order(\sqrt{\log p}) \cdot \order\left(k \sqrt{\frac{\log p}{n}}\right) = \order\left(\frac{k \log p}{\sqrt{n}}\right).
\end{equation*}
For the second statement, we simply write
\begin{align*}
\left|\frac{1}{\sigmahat} - \frac{1}{\sigmastar}\right| & = \frac{1}{\sigmahat \sigmastar} |\sigmahat - \sigmastar| \le \frac{1}{\sigmastar\left(\sigmastar - \order\left(\frac{k \log p}{\sqrt{n}}\right)\right)} \order\left(\frac{k \log p}{\sqrt{n}}\right) \\
& = \order\left(\frac{k \log p}{\sqrt{n}}\right).
\end{align*}


\subsection{Proof of Lemma~\ref{LemGLasso}}
\label{AppLemGLasso}

Note that we may choose the value of $c_2 > 0$ in Lemma~\ref{LemMaxGauss}, which then determines the constants $c$ and $C$, so for instance, the condition $\frac{\log(2p^2n))^7}{n} \le C_2 n^{-c_2}$ holds with $c_2 = \frac{1}{2}$ if $n \succsim \log^{15} p$.

For the remainder of the proof, we adapt an argument from Ravikumar et al.~\cite{RavEtal11}, suitable for the present setting.  The main technical argument is a primal-dual witness construction, which shows that the solution of the graphical Lasso restricted to the true support set also yields the unique global optimum when padded with zeros to obtain a $p \times p$ matrix. We only mention the necessary amendments to the arguments used in Ravikumar et al.~\cite{RavEtal11}; for more details, see the paper.

Following the proof of Theorem 1 in Ravikumar et al.~\cite{RavEtal11}, we denote $W = \Sigmahat - \Sigmastar$. By Lemma~\ref{LemCovX}, we have
\begin{equation*}
\|W\|_{\max} \le \frac{\alpha \lambda}{8},
\end{equation*}
w.h.p. Next, we define the matrix function
\begin{equation*}
R(\Delta) = \Thetahat^{-1} - \Theta^{*-1} + \Theta^{*-1} \Delta \Theta^{*-1}.
\end{equation*}
By Lemma 5 of Ravikumar et al.~\cite{RavEtal11}, we know that $\|\Delta\|_{\max} \le \frac{1}{3 \kappa_{\Sigmastar}d}$ implies that
\begin{equation*}
\|R(\Delta)\|_{\max} \le \frac{3k\|\Delta\|_{\max}^2 \kappa^3_{\Sigmastar}}{2}.
\end{equation*}
Lemma 6 of Ravikumar et al.~\cite{RavEtal11} then applies directly, as well, stating that if
\begin{equation}
\label{EqnCherry}
r := 2\kappa_{\Gammastar} (\|W\|_{\max} + \lambda) \le \min\left\{\frac{1}{3\kappa_{\Sigmastar} k}, \; \frac{1}{3 \kappa^3_{\Sigmastar} \kappa_{\Gammastar} k}\right\},
\end{equation}
we have
\begin{equation*}
\|\Thetahat - \Thetastar\|_{\max} \le r.
\end{equation*}
Note that the bound~\eqref{EqnCherry} holds by our assumption on the range of $\lambda$. In particular, we have
\begin{align*}
\|R(\Thetahat - \Thetastar)\|_{\max} & \le \frac{3\kappa^3_{\Sigmastar}k}{2} \|\Thetahat - \Thetastar\|_{\max}^2 \\
& \le \frac{3\kappa^3_{\Sigmastar}k}{2} \cdot 4 \kappa_{\Gammastar}^2 \left(\frac{\alpha}{8} + 1\right)^2 \lambda^2 \\
& \le \frac{\alpha \lambda}{8},
\end{align*}
by our assumptions. Lemma 4 of Ravikumar et al.~\cite{RavEtal11} then applies, implying the required strict dual feasibility result and the validity of the primal-dual witness construction argument.


\subsection{Proof of Lemma~\ref{LemApsi}}
\label{AppLemApsi}

Condition~\eqref{EqnConvDist} is easy to verify using the assumptions and the multivariate CLT (cf.\ Lemma~\ref{LemLindeberg}). Indeed, the mixed third moments of the summands are finite by assumption, and the variance of the limiting distribution is obtained via the calculation
\begin{align*}
\Var\left(P_J \cdot \frac{\Theta_x}{A(\psi)} \cdot \psi\left(\frac{\epsilon_i}{\sigmastar}\right) x_i\right) & = \frac{P_J \Theta_x}{A(\psi)} \cdot \E\left[\psi^2\left(\frac{\epsilon_i}{\sigmastar}\right)\right] \cdot \Sigmastar \frac{\Theta_x P_J^T}{A(\psi)} \\
& = \frac{\E[\psi^2(\epsilon_i/\sigmastar)]}{A^2(\psi)} \cdot P_J \Theta_x P_J^T.
\end{align*}

We now turn to proving condition~\eqref{EqnACond}. We first bound the estimation error $|\sigmahat - \sigmastar|$. We have the following result, proved in Appendix~\ref{AppLemSigma}:

\begin{lem*}
\label{LemSigma}
We have
\begin{equation*}
|\sigmahat - \sigmastar| = \order\left(\frac{k \log p}{\sqrt{n}}\right)
\end{equation*}
and
\begin{equation*}
\left|\frac{1}{\sigmahat} - \frac{1}{\sigmastar}\right| = \order\left(\frac{k \log p}{\sqrt{n}}\right).\end{equation*}
\end{lem*}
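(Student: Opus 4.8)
The plan is to split the error with the triangle inequality into a ``plug-in'' term, measuring the cost of replacing $\betastar$ by $\betahat$, and a ``concentration'' term, measuring the deviation of the empirical second moment of the true residuals from its mean:
\begin{equation*}
|\sigmahat - \sigmastar| \le \left|\frac{1}{n} \sum_{i=1}^n \left((y_i - x_i^T \betahat)^2 - (y_i - x_i^T \betastar)^2\right)\right| + \left|\frac{1}{n} \sum_{i=1}^n (y_i - x_i^T \betastar)^2 - \E[\epsilon_i^2]\right|.
\end{equation*}
Since $y_i - x_i^T \betastar = \epsilon_i$, the second term is the recentered average of the i.i.d.\ variables $\epsilon_i^2$, which have finite variance under assumption (B1); Chebyshev's inequality then controls it at the negligible rate $\order(\sigmastar/\sqrt{n})$.

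For the first term I would substitute $y_i - x_i^T \betahat = x_i^T(\betastar - \betahat) + \epsilon_i$ and expand the square so that the $\epsilon_i^2$ contributions cancel, leaving
\begin{equation*}
\frac{1}{n} \sum_{i=1}^n \left(x_i^T(\betahat - \betastar)\right)^2 + \frac{2}{n} \sum_{i=1}^n \epsilon_i \, x_i^T(\betastar - \betahat).
\end{equation*}
The first piece is the quadratic form $(\betahat - \betastar)^T \Sigmahat (\betahat - \betastar)$, and the second is bounded by $2\|X^T\epsilon/n\|_\infty \, \|\betahat - \betastar\|_1$ via H\"older's inequality.

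The bulk of the work is in controlling these two pieces under the weak, possibly heavy-tailed, covariate assumptions, where sub-Gaussian concentration of $\Sigmahat$ is unavailable. For the quadratic form, I would apply Lemma~\ref{LemConeDev} with $\Gamma = \Sigmahat - \Sigmastar$, verifying its deviation hypothesis~\eqref{EqnDevAssump} through the elementwise concentration of $\Sigmahat$ supplied by Lemma~\ref{LemCovX} (equivalently Lemma~\ref{LemCovering}); combined with the cone condition $\|\betahat - \betastar\|_1 \le 4\sqrt{k}\|\betahat - \betastar\|_2$ and the rate $\|\betahat - \betastar\|_2 = \order(\sqrt{k \log p / n})$ from Theorem~\ref{ThmLepski}, this yields $(\betahat - \betastar)^T \Sigmahat (\betahat - \betastar) = \order(k \log p / n)$. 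For the cross term, I would bound $\|X^T\epsilon/n\|_\infty \le \|X\|_{\max}\cdot \|\epsilon\|_1/n$, using Lemma~\ref{LemXmax} to get $\|X\|_{\max} = \order(\sqrt{\log p})$ and a Chebyshev argument to get $\|\epsilon\|_1/n = \E[|\epsilon_i|] + \order(1/\sqrt{n})$; together with $\|\betahat - \betastar\|_1 = \order(k \sqrt{\log p / n})$ this gives the cross term the dominant rate $\order(k \log p / \sqrt{n})$, which sets the overall bound.

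Finally, the reciprocal statement follows at once by writing $\left|1/\sigmahat - 1/\sigmastar\right| = |\sigmahat - \sigmastar|/(\sigmahat \sigmastar)$ and noting that $\sigmastar$ is a fixed positive constant while $\sigmahat = \sigmastar + o(1)$, so the denominator is bounded away from zero w.h.p. The hardest step will be the quadratic-form bound: because the covariate products are only sub-exponential rather than sub-Gaussian, one cannot control $\opnorm{\Sigmahat - \Sigmastar}$ directly, and must instead route through the elementwise guarantee of Lemma~\ref{LemCovX} and the restricted-set argument of Lemma~\ref{LemConeDev}, crucially exploiting the cone condition satisfied by $\betahat - \betastar$.
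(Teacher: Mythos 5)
Your proposal is correct and follows essentially the same route as the paper's own proof: the identical triangle-inequality split, the same expansion into the quadratic form $(\betahat - \betastar)^T \Sigmahat (\betahat - \betastar)$ plus the cross term, the same control of these via Lemma~\ref{LemConeDev} (with deviation condition verified through Lemma~\ref{LemCovering}), H\"older's inequality with Lemma~\ref{LemXmax} and Chebyshev for the cross term, and the same reciprocal argument for the second claim. No meaningful differences.
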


By the triangle inequality and a Taylor expansion, we have
\begin{align*}
|\Ahat(\psi) - A(\psi)| & \le \left|\frac{1}{\sigmahat} - \frac{1}{\sigmastar}\right| \left|\psi'\left(\frac{y_i - x_i^T \betahat}{\sigmahat}\right)\right| \\
& \qquad + \frac{1}{\sigmastar} \left|\frac{1}{n} \sum_{i=1}^n\left(\psi'\left(\frac{y_i - x_i^T \betahat}{\sigmahat}\right) - \psi'\left(\frac{y_i - x_i^T \betastar}{\sigmastar}\right)\right)\right| \\
& \qquad + \frac{1}{\sigmastar} \left|\frac{1}{n} \sum_{i=1}^n \psi'\left(\frac{\epsilon_i}{\sigmastar}\right) - \E\left[\psi'\left(\frac{\epsilon_i}{\sigmastar}\right)\right]\right| \\
& := A + B + C.
\end{align*}
We now bound each of the terms separately. Note that since $\|\psi'\|_\infty < \infty$ by assumption, term $C$ may be bounded directly by $\order\left(\frac{1}{\sqrt{n}}\right)$, w.h.p., using Chebyshev's inequality. Furthermore, we have
\begin{equation*}
A \le \|\psi'\|_\infty \cdot \left|\frac{1}{\sigmahat} - \frac{1}{\sigmastar}\right| = \order\left(\frac{k \log p}{\sqrt{n}}\right),
\end{equation*}
using Lemma~\ref{LemSigma}. Finally, defining
\begin{equation*}
\deltahat_i := \frac{y_i - x_i^T \betahat}{\sigmahat} - \frac{y_i - x_i^T \betastar}{\sigmastar} = \frac{x_i(\betastar - \betahat)}{\sigmahat} + \epsilon_i \left(\frac{1}{\sigmahat} - \frac{1}{\sigmastar}\right),
\end{equation*}
we may use a Taylor series expansion to write
\begin{align*}
B = \frac{1}{\sigmastar} \left|\frac{1}{n} \sum_{i=1}^n \psi''\left(\frac{\epsilon_i}{\sigmastar}\right) \deltahat_i + \frac{1}{n} \sum_{i=1}^n \frac{\psi^{(3)}(\uhat_i)}{2} \cdot \deltahat_i^2\right|,
\end{align*}
where $\uhat_i$ lies on the segment between $\frac{y_i - x_i^T \betastar}{\sigmastar}$ and $\frac{y_i - x_i^T \betahat}{\sigmahat}$, for each $i$. Applying the triangle inequality and H\"{o}lder's inequality then gives
\begin{align}
\label{EqnBBound}
\sigmastar B &
%
\le \frac{1}{\sigmahat} \left\|\frac{1}{n} \sum_{i=1}^n \psi''\left(\frac{\epsilon_i}{\sigmastar}\right) x_i\right\|_\infty \cdot \|\betahat - \betastar\|_1 + \left|\frac{1}{\sigmahat} - \frac{1}{\sigmastar}\right| \cdot \left|\frac{1}{n} \sum_{i=1}^n \psi''\left(\frac{\epsilon_i}{\sigmastar}\right) \epsilon_i\right| \notag \\
& \qquad + \|\psi^{(3)}\|_\infty \left(\frac{1}{n} \sum_{i=1}^n \left(\frac{x_i(\betahat - \betastar)}{\sigmahat}\right)^2 + \frac{1}{n} \sum_{i=1}^n \epsilon_i^2 \left(\frac{1}{\sigmahat} - \frac{1}{\sigmastar}\right)^2\right) \notag \\
& := B_1 + B_2 + B_3.
\end{align}
We claim that $B = \order\left(\frac{k \log p}{n}\right)$.

Using Lemma~\ref{LemConcX}, together with the estimation error bound~\eqref{EqnEll1}, we have
\begin{equation*}
B_1 = \order\left(\sqrt{\frac{\log p}{n}}\right) \cdot \order\left(k \sqrt{\frac{\log p}{n}}\right),
\end{equation*}
w.h.p. Furthermore,
\begin{equation*}
B_2 = \order\left(\frac{k \log p}{\sqrt{n}}\right) \cdot \order\left(\frac{1}{\sqrt{n}}\right),
\end{equation*}
using the error bound on $|\sigmahat - \sigmastar|$ and Chebyshev's inequality. Finally, we have
\begin{align*}
B_3 & \le \|\psi^{(3)}\|_\infty \left(\frac{1}{\sigmahat^2} (\betahat - \betastar)^T \Sigmahat (\betahat - \betastar) + \left|\frac{1}{\sigmahat} - \frac{1}{\sigmastar}\right|^2 \frac{1}{n} \sum_{i=1}^n \epsilon_i^2 \right) \\
& \qquad = \order\left(\left(\sqrt{\frac{k \log p}{n}}\right)^2 + \left(\frac{k \log p}{\sqrt{n}}\right)^2 \cdot \frac{1}{\sqrt{n}}\right) = \order\left(\frac{k \log p}{n}\right),
\end{align*}
using the same argument as in the proof of inequality~\eqref{EqnCovBound} in Lemma~\ref{LemSigma}.

Putting the results together, we have
\begin{align*}
|\Ahat(\psi) - A(\psi)| & = \order\left(\frac{1}{\sqrt{n}}\right) + \order\left(\frac{k \log p}{\sqrt{n}}\right) + \order\left(\frac{k \log p}{n}\right) \\
& = \order\left(\frac{k \log p}{\sqrt{n}}\right),
\end{align*}
as claimed.


\subsection{Proof of Lemma~\ref{LemSlutsky}}
\label{AppLemSlutsky}

Note that it suffices to show the following convergence results:
\begin{align}
\frac{\Ahat(\psi)}{A(\psi)} & \stackrel{\mprob}{\longrightarrow} 1, \label{EqnA} \\
\frac{\E[\psi^2(\epsilon_i/\sigmastar)]}{\frac{1}{n} \sum_{i=1}^n \psi^2\left((y_i - x_i^T \betahat)/\sigmahat\right)} & \stackrel{\mprob}{\longrightarrow} 1, \label{EqnB} \\
\left(\Thetahat_{JJ}\right)^{-1} (\Theta_x)_{JJ} & \stackrel{\mprob}{\longrightarrow} 1, \label{EqnC}
\end{align}
since we may combine the statements via Slutsky's theorem to obtain the desired result. Convergence results~\eqref{EqnA} and~\eqref{EqnC} are direct consequences of Lemmas~\ref{LemApsi} and~\ref{LemGLasso}, respectively, under the assumed sample size scaling. For convergence result~\eqref{EqnB}, we may use a parallel argument to the one employed to bound term $B$ in the proof of Lemma~\ref{LemApsi}. The only difference is that we use a Taylor expansion of $\psi^2$ rather than $\psi'$. Note that we have assumed $(\psi^2)''$ to be bounded. Since
\begin{align*}
(\psi^2)' = 2\psi \psi',
\end{align*}
the terms we need to control are of the form
\begin{equation*}
B_1' := \left\|\frac{1}{n} \sum_{i=1}^n \psi\left(\frac{\epsilon_i}{\sigmastar}\right) \psi'\left(\frac{\epsilon_i}{\sigmastar}\right) x_i\right\|_\infty, \quad B_2' := \left|\frac{1}{n} \sum_{i=1}^n \psi\left(\frac{\epsilon_i}{\sigmastar}\right) \psi'\left(\frac{\epsilon_i}{\sigmastar}\right) \frac{\epsilon_i}{\sigmastar} \right|,
\end{equation*}
the analog of terms $B_1$ and $B_2$ in inequality~\eqref{EqnBBound}. By Chebyshev's inequality and assumption (B2), we have $B_2' = \order\left(\frac{1}{\sqrt{n}}\right)$.


\section{Additional useful lemmas}
\label{AppUseful}

We begin with a lemma concerning the magnitude of the entries of the design matrix.

\begin{lem*}
\label{LemXmax}
Suppose assumption (A2) holds. Then
\begin{equation*}
\mprob\left(\|X\|_{\max} \ge c\sqrt{\log np}\right) \le \frac{c'}{np},
\end{equation*}
for constants $c, c' > 0$ depending on $C_1$ and $\max_j (\Sigma_x)_{jj}$.
\end{lem*}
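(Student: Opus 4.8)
The plan is to bound the entrywise maximum $\|X\|_{\max} = \max_{1 \le i \le n,\, 1 \le j \le p} |x_{ij}|$ by controlling the tail of each individual entry and then applying a union bound over all $np$ entries. The key input is assumption (A2) specialized to the diagonal case $j = k$, which gives $\max_{1 \le j \le p} \E\left[\exp\left(|x_{ij}^2 - (\Sigma_x)_{jj}|/C_1\right)\right] \le 2$. Applying Markov's inequality to this exponential moment yields the sub-exponential tail
$$\mprob\left(|x_{ij}^2 - (\Sigma_x)_{jj}| \ge t\right) \le 2\exp\left(-\frac{t}{C_1}\right), \qquad \forall t > 0.$$

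First I would translate this second-moment bound into a tail bound on $|x_{ij}|$ itself. Writing $M := \max_{1 \le j \le p} (\Sigma_x)_{jj}$, for any threshold $s$ with $s^2 > M$ the event $\{|x_{ij}| \ge s\} = \{x_{ij}^2 \ge s^2\}$ is contained in $\{x_{ij}^2 - (\Sigma_x)_{jj} \ge s^2 - M\}$, so the display above gives $\mprob(|x_{ij}| \ge s) \le 2\exp\left(-(s^2 - M)/C_1\right)$. A union bound over the $np$ entries then produces
$$\mprob(\|X\|_{\max} \ge s) \le 2np\exp\left(-\frac{s^2 - M}{C_1}\right) = 2\exp\left(\frac{M}{C_1}\right)\exp\left(\log(np) - \frac{s^2}{C_1}\right).$$

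Finally I would set $s = c\sqrt{\log(np)}$ with $c = \sqrt{2C_1}$, so that $s^2/C_1 = 2\log(np)$ and the exponent collapses to $\log(np) - 2\log(np) = -\log(np)$, yielding
$$\mprob\left(\|X\|_{\max} \ge c\sqrt{\log np}\right) \le 2\exp\left(\frac{M}{C_1}\right) \cdot \frac{1}{np}.$$
This establishes the claim with $c = \sqrt{2C_1}$ and $c' = 2\exp(M/C_1)$, both depending only on $C_1$ and $M = \max_j (\Sigma_x)_{jj}$, as asserted. The argument is entirely routine; the only mild subtlety is the passage from the second-moment control of $x_{ij}^2$ supplied by (A2) to a first-moment tail on $|x_{ij}|$, which is handled by the elementary event containment above, together with the observation that $s^2 > M$ holds automatically once $np$ is large (indeed $s^2 = 2C_1 \log(np)$), so no separate case analysis is needed.
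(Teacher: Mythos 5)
Your proof is correct and follows essentially the same route as the paper's: both specialize (A2) to the diagonal entries, absorb the shift by $\max_j (\Sigma_x)_{jj}$, apply an exponential Markov bound, take a union bound over the $np$ entries, and choose the threshold $s = \sqrt{2C_1 \log(np)}$ so the exponent dominates the union-bound factor. The only cosmetic difference is that the paper moves the shift inside the moment generating function (bounding $\E[\exp(X_{ij}^2/C_1)]$ directly) before applying Markov, whereas you apply Markov to the centered quantity and handle the shift via event containment; the resulting constants are identical.
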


\begin{proof}
By assumption (A2) and the triangle inequality, we have
\begin{align*}
\E\left[\exp\left(\frac{X_{ij}^2}{C_1}\right)\right] & \le \E\left[\exp\left(\frac{|X_{ij}^2 - (\Sigma_x)_{jj}|}{C_1}\right)\right] \E\left[\exp\left(\frac{(\Sigma_x)_{jj}}{C_1}\right)\right] \\
& \le 2 \E\left[\exp\left(\frac{\max_j (\Sigma_x)_{jj}}{C_1}\right)\right] := C_1',
\end{align*}
for all $1 \le j \le p$. Hence, for any $t > 0$, we may write
\begin{align*}
\mprob\left(X_{ij}^2 \ge t\right) & = \mprob\left(\exp\left(\frac{X_{ij}^2}{C_1}\right) \ge \exp\left(\frac{t}{C_1}\right)\right) \\
& \le \frac{\E[\exp(X_{ij}^2/C_1)]}{\exp(t/C_1)} \\
& \le C_1' \exp\left(-\frac{t}{C_1}\right),
\end{align*}
using Markov's inequality. Taking a union bound over $i$ and $j$ then gives
\begin{equation*}
\mprob\left(\|X\|_{\max}^2 \ge t\right) \le np \cdot C_1' \exp\left(-\frac{t}{C_1}\right),
\end{equation*}
and setting $t = 2C_1\log(np)$ yields the desired result.
\end{proof}

We now have a useful lemma concerning a Gaussian approximation of maxima.

\begin{lem*}
\label{LemMaxGauss}
[Gaussian approximation of maxima~\cite{CheEtal13}]
Suppose $\{\epsilon_i\}_{i=1}^n \subseteq \real^p$ are independent random vectors and one of the following conditions holds uniformly in $1 \le i \le n$ and $1 \le j \le p$:
\begin{itemize}
\item[(E3)] $\E[\epsilon_{ij}] = 0$, $\E[\epsilon_{ij}^2] \ge c_1$, and $\E\left[\exp\left(\frac{|\epsilon_{ij}|}{C_1}\right)\right] \le 2$
\item[(E4)] $\E[\epsilon_{ij}] = 0$, $\E[\epsilon_{ij}^2] \ge c_1$, and $\E\left[\max_{1 \le j \le p} \epsilon_{ij}^4\right] \le C_1$.
\end{itemize}
Also suppose $\frac{\left(\log(pn)\right)^7}{n} \le C_2 n^{-c_2}$. Then there exist positive constants $c$ and $C$, depending only on $c_1, c_2, C_1$, and $C_2$ such that
\begin{equation*}
\sup_{t \in \real} \left|\mprob\left(\max_{1 \le j \le p} E_j \le t\right) - \mprob\left(\max_{1 \le j \le p} Y_j \le t\right)\right| \le Cn^{-c},
\end{equation*}
where $E = \frac{1}{\sqrt{n}} \sum_{i=1}^n \epsilon_i$, and $Y \sim \scriptN(0, \E[\epsilon_i \epsilon_i^T])$ is a multivariate Gaussian vector with components $Y_j$.
\end{lem*}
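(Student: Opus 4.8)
Since this lemma is quoted directly from Chernozhukov et al.~\cite{CheEtal13}, the most economical route would be simply to cite the corresponding theorem there; here I instead sketch the Gaussian approximation (``Lindeberg swapping'') argument that underlies it. The plan is to reduce the Kolmogorov-distance statement to a comparison of expectations of a smooth test function, to carry out that comparison by replacing the summands $\epsilon_i$ with Gaussian analogues one index at a time, and finally to undo the smoothing by invoking an anti-concentration bound for the maximum of a Gaussian vector.

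First I would introduce two layers of smoothing. Because the max is nonsmooth, I replace $\max_{1 \le j \le p} z_j$ by the soft-max $F_\beta(z) = \beta^{-1} \log\left(\sum_{j=1}^p e^{\beta z_j}\right)$, which satisfies $0 \le F_\beta(z) - \max_j z_j \le \beta^{-1}\log p$ and whose partial derivatives up to third order are bounded by fixed polynomials in $\beta$. I then replace the indicator $1\{\cdot \le t\}$ by a thrice-differentiable function $g$ with $\|g^{(m)}\|_\infty \precsim \eta^{-m}$ for $m = 1,2,3$, so that $m_{\beta,\eta}(z) := g(F_\beta(z))$ is a smooth test function on $\real^p$ whose mixed third derivatives, summed over indices, are controlled in terms of $\beta$ and $\eta$.

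The core step is the Lindeberg interpolation. Let $\{y_i\}_{i=1}^n$ be independent Gaussian vectors with $\E[y_i] = 0$ and $\Cov(y_i) = \Cov(\epsilon_i)$, so that $Y = n^{-1/2}\sum_i y_i$ has the stated covariance; writing $E = n^{-1/2}\sum_i \epsilon_i$, I would bound $|\E[m_{\beta,\eta}(E)] - \E[m_{\beta,\eta}(Y)]|$ by swapping $\epsilon_i$ for $y_i$ one at a time and Taylor expanding each summand to third order. The zeroth, first, and second order terms cancel because $\epsilon_i$ and $y_i$ share their first two moments, leaving a remainder governed by the third derivatives of $m_{\beta,\eta}$ and the third absolute moments of the summands. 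The main obstacle is controlling these third-moment remainder terms uniformly: under (E4) this follows at once from the bounded fourth moment, whereas under (E3) the third moments of a sub-exponential coordinate need not be small, so I would truncate each $\epsilon_{ij}$ at a level proportional to $C_1 \log(pn)$, bound the tail contribution using the hypothesis $\E[\exp(|\epsilon_{ij}|/C_1)] \le 2$, and apply the swapping argument to the truncated variables.

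Finally, I would convert the smooth-function bound into the Kolmogorov bound and optimize the free parameters. The anti-concentration inequality for Gaussian maxima, namely $\sup_t \mprob(|\max_j Y_j - t| \le s) \precsim s\sqrt{\log p}$, which is available because the coordinate variances are bounded below by $c_1$, lets me pass from $\E[g(F_\beta(\cdot))]$ back to $\mprob(\max_j \cdot \le t)$ at the cost of the two smoothing errors $\beta^{-1}\log p$ and $\eta$ together with $\eta^{-1}\sqrt{\log p}$ factors from the derivative bounds. Choosing $\beta \asymp \eta^{-1}\log p$ and optimizing $\eta$ balances these contributions and produces a bound of order a power of $\log^7(pn)/n$; the hypothesis $\log^7(pn)/n \le C_2 n^{-c_2}$ then forces this quantity below $Cn^{-c}$ for suitable constants depending only on $c_1, c_2, C_1$, and $C_2$, which completes the proof.
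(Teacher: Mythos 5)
The paper offers no proof of this lemma at all: it is imported verbatim from Chernozhukov, Chetverikov, and Kato~\cite{CheEtal13}, so the ``paper's approach'' is simply citation, exactly as you note in your opening sentence. Your sketch of the underlying argument is a correct and reasonably complete account of how the cited result is established: smoothing the max via the soft-max $F_\beta$, smoothing the indicator, a moment-matching interpolation with third-order Taylor remainders, truncation at level $\asymp C_1 \log(pn)$ to handle the sub-exponential case (E3), and Gaussian anti-concentration (valid because the coordinate variances are bounded below by $c_1$) to undo the smoothing, with the final optimization producing a power of $\log^7(pn)/n$ that the stated sample-size hypothesis converts into $Cn^{-c}$. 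The only nuance worth flagging is that the proof in \cite{CheEtal13} runs the comparison via a Slepian-type smart-path interpolation combined with Stein's leave-one-out construction rather than coordinate-by-coordinate Lindeberg swapping; the two are close relatives, both hinge on the same derivative bounds for $g \circ F_\beta$ and the same truncation and anti-concentration ingredients, and your Lindeberg variant is a legitimate alternative route to the same bound.
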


Lemma~\ref{LemMaxGauss} leads to several useful concentration inequalities, which we collect here. In particular, using a union bound together with standard Gaussian tail bounds, we have
\begin{equation*}
\mprob\left(\max_{1 \le j \le p} Y_j \le t\right) \le 2p^2 \exp\left(-\frac{t^2}{2\sigma_Y^2}\right),
\end{equation*}
where $\sigma_Y = \max_{j,k} \E[\epsilon_{ij} \epsilon_{ik}]$, so we have
\begin{equation*}
\max_{1 \le j \le p} E_j \le 2\sigma_Y \sqrt{\log p},
\end{equation*}
with probability at least $1 - Cn^{-c} - \exp(-c'\log p)$.

We have the following result:
\begin{lem*}
\label{LemCovX}
Suppose assumptions (A1)--(A3) hold. Suppose $\frac{(\log (2p^2n))^7}{n} \le C_2n^{-c_2}$, for a constant $c_2 > 0$. Then
\begin{equation*}
\mprob\left(\left\|\frac{X^TX}{n} - \Sigma_x\right\|_{\max} \ge 2\sigma_{xx} \sqrt{\frac{\log p}{n}}\right) \le Cn^{-c} + \exp(-c'\log p),
\end{equation*}
for constants $C, c, c' > 0$ depending only on $c_1, c_2, C_1, C_2$, and $\sigma_{xx}^2$.
\end{lem*}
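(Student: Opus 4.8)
The plan is to invoke the Gaussian approximation of maxima (Lemma~\ref{LemMaxGauss}) after rewriting the elementwise deviation as a maximum of recentered products. First I would note that, since $\E[x_i]=0$, the matrix $\frac{X^TX}{n}-\Sigma_x$ has $(j,k)$ entry $\frac{1}{n}\sum_{i=1}^n\left(x_{ij}x_{ik}-(\Sigma_x)_{jk}\right)$, so
\begin{equation*}
\left\|\frac{X^TX}{n}-\Sigma_x\right\|_{\max}=\frac{1}{\sqrt n}\max_{1\le j,k\le p}\left|\frac{1}{\sqrt n}\sum_{i=1}^n\left(x_{ij}x_{ik}-(\Sigma_x)_{jk}\right)\right|.
\end{equation*}
To turn the absolute value into a one-sided maximum, I would introduce the $2p^2$-dimensional vectors $\epsilon_i$ whose coordinates are $\pm\left(x_{ij}x_{ik}-(\Sigma_x)_{jk}\right)$ over all pairs $(j,k)$ and both signs, so that $\left\|\frac{X^TX}{n}-\Sigma_x\right\|_{\max}=\frac{1}{\sqrt n}\max_\ell E_\ell$ with $E=\frac{1}{\sqrt n}\sum_{i=1}^n\epsilon_i$. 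The target event is then $\{\max_\ell E_\ell\ge 2\sigma_{xx}\sqrt{\log p}\}$.

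Next I would verify that the $\epsilon_i$ satisfy hypothesis (E3) of Lemma~\ref{LemMaxGauss}, uniformly in $\ell$ and $i$: each coordinate is mean-zero by construction; $\E[\epsilon_{i\ell}^2]=\Var(x_{ij}x_{ik})\ge c_1$ by assumption (A1); and $\E\left[\exp(|\epsilon_{i\ell}|/C_1)\right]=\E\left[\exp(|x_{ij}x_{ik}-(\Sigma_x)_{jk}|/C_1)\right]\le 2$ by assumption (A2). The ambient dimension is now $2p^2$ rather than $p$, and the sample-size hypothesis $\frac{(\log(2p^2n))^7}{n}\le C_2n^{-c_2}$ is precisely what is needed to apply the lemma to this stacked vector, giving
\begin{equation*}
\sup_{t\in\real}\left|\mprob\left(\max_\ell E_\ell\le t\right)-\mprob\left(\max_\ell Y_\ell\le t\right)\right|\le Cn^{-c},
\end{equation*}
where $Y\sim\N(0,\E[\epsilon_i\epsilon_i^T])$.

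It then remains to control the Gaussian maximum. Each $Y_\ell$ is centered Gaussian with variance $\E[\epsilon_{i\ell}^2]=\Var(x_{ij}x_{ik})\le\sigma_{xx}^2$ by assumption (A3), so the maximal coordinate standard deviation $\sigma_Y$ is at most $\sigma_{xx}$. Applying the Gaussian-maxima tail bound collected immediately after Lemma~\ref{LemMaxGauss} (a union bound over the $2p^2$ coordinates combined with the standard Gaussian tail, and the elementary fact $\log(2p^2)\asymp\log p$) gives $\max_\ell Y_\ell\le 2\sigma_{xx}\sqrt{\log p}$ except on an event of probability at most $\exp(-c'\log p)$. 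Combining this with the approximation bound at $t=2\sigma_{xx}\sqrt{\log p}$ yields
\begin{equation*}
\mprob\left(\max_\ell E_\ell\ge 2\sigma_{xx}\sqrt{\log p}\right)\le\mprob\left(\max_\ell Y_\ell\ge 2\sigma_{xx}\sqrt{\log p}\right)+Cn^{-c}\le\exp(-c'\log p)+Cn^{-c},
\end{equation*}
which is the claim once the threshold is divided by $\sqrt n$.

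The routine parts are the reduction to a maximum of recentered products and the verification of (E3), which map directly onto (A1)--(A2), with (A3) entering only to pin down the constant multiplying $\sigma_{xx}$. The genuinely substantive step, which I would lean on as a black box, is Lemma~\ref{LemMaxGauss} itself: its $Cn^{-c}$ Kolmogorov-distance guarantee is what makes this merely sub-exponential-at-one-point (rather than sub-Gaussian) setting tractable. The main bookkeeping obstacle is that the ambient dimension inflates to $2p^2$, so one must track $\log(2p^2)\asymp\log p$ throughout and confirm that the stated sample-size hypothesis is exactly the one required for the high-dimensional central-limit approximation to remain valid at this inflated dimension.
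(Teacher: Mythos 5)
Your proposal is correct and follows essentially the same route as the paper's own proof: vectorize the recentered products into a $2p^2$-dimensional vector with $\pm$ signs, invoke Lemma~\ref{LemMaxGauss} under the stated sample-size condition, and finish with a union bound over the Gaussian coordinates using assumption (A3). If anything, your verification of hypothesis (E3) is slightly more complete than the paper's, since you explicitly credit (A1) for the variance lower bound $\E[\epsilon_{i\ell}^2]\ge c_1$, which the paper attributes only to (A2).
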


\begin{proof}
We apply Lemma~\ref{LemMaxGauss} with $\epsilon_i = \mvec\left(\pm \left(x_i x_i^T - \Sigma_x\right)\right)$, where for a matrix $A \in \real^{p \times p}$, the vector $\mvec\left(\pm A\right) \in \real^{2p^2}$ has entries $\left\{\pm A_{jk}\right\}_{1 \le j,k \le p}$. Note that the conditions (E3) (or (E4)) are satisfied by assumption (A2). Hence, if $\frac{\log^7 (2p^2n)}{n} \le C_2n^{-c_2}$, the lemma implies that
\begin{equation}
\label{EqnKS}
\sup_{t \in \real} \left|\mprob\left(\left\|\frac{X^TX}{n} - \Sigma_x\right\|_{\max} \ge \frac{t}{\sqrt{n}}\right) - \mprob\left(Y \ge t\right)\right| \le Cn^{-c},
\end{equation}
where $Y = \max_{1 \le j,k \le p} |Y_{jk}|$ and $Y_{jk} \sim N\left(0, \Var(x_{ij} x_{ik})\right)$. In particular, under assumption (A3) and using a union bound, we have
\begin{align*}
\mprob(Y \ge t) & \le \sum_{1 \le j,k \le p} \mprob(|Y_{jk}| \ge t) \le 2\sum_{1 \le j,k \le p} \exp\left(-\frac{2t^2}{\sigma_{xx}^2}\right) = 2p^2 \exp\left(-\frac{2t^2}{\sigma_{xx}^2}\right).
\end{align*}
Accordingly, we take $t = 2\sigma_{xx} \sqrt{\log p}$, to obtain
\begin{equation*}
\mprob(Y \ge t) \le \exp(-c'\log p),
\end{equation*}
Combining this bound with inequality~\eqref{EqnKS} implies the desired statement.
\end{proof}

We also have the following result:
\begin{lem*}
\label{LemCovering}
Suppose $\Gamma = \frac{X^TX}{n} - \Sigma_x$. Under assumptions (A1)--(A3) and the sample size scaling $\frac{(2k \log p)^7}{n} \precsim n^{-c}$, the deviation bound~\eqref{EqnDevAssump} holds with probability at least $1 - Cn^{-c} - \exp(-c'\log p)$.
\end{lem*}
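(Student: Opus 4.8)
The plan is to observe that the deviation bound~\eqref{EqnDevAssump} over sparse vectors is controlled entirely by the elementwise maximum norm of $\Gamma$, and then to invoke Lemma~\ref{LemCovX} to bound that norm. Concretely, I would fix any $v \in \real^p$ with $\|v\|_0 \le 2k$ and $\|v\|_2 \le 1$, and let $S = \supp(v)$, so that $|S| \le 2k$. Since $v$ is supported on $S$, we have $v^T \Gamma v = \sum_{j,\ell \in S} v_j v_\ell \Gamma_{j\ell}$, and therefore
\begin{equation*}
|v^T \Gamma v| \le \|v_S\|_1^2 \cdot \|\Gamma\|_{\max} \le 2k \|v\|_2^2 \cdot \|\Gamma\|_{\max} \le 2k \|\Gamma\|_{\max},
\end{equation*}
where the middle inequality is Cauchy--Schwarz, $\|v_S\|_1 \le \sqrt{|S|}\,\|v_S\|_2 \le \sqrt{2k}\,\|v\|_2$. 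The point is that this bound is \emph{uniform} over all $2k$-sparse unit vectors and involves no supremum that must itself be handled by an $\epsilon$-net, since the factor $\|\Gamma\|_{\max}$ has already decoupled the geometry of $v$ from the randomness in $\Gamma$.

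It then remains only to bound $\|\Gamma\|_{\max} = \left\|\frac{X^TX}{n} - \Sigma_x\right\|_{\max}$, which is exactly the content of Lemma~\ref{LemCovX}: under assumptions (A1)--(A3) and a scaling of the form $\frac{(\log(2p^2 n))^7}{n} \le C_2 n^{-c_2}$, we have $\|\Gamma\|_{\max} \le 2\sigma_{xx} \sqrt{\frac{\log p}{n}}$ with probability at least $1 - Cn^{-c} - \exp(-c'\log p)$. Combining this with the display above yields
\begin{equation*}
\sup_{\|v\|_0 \le 2k,\; \|v\|_2 \le 1} |v^T \Gamma v| \le 4k\sigma_{xx} \sqrt{\frac{\log p}{n}} =: \delta
\end{equation*}
on the same high-probability event, which is precisely the deviation condition~\eqref{EqnDevAssump}, with a probability that matches the claimed $1 - Cn^{-c} - \exp(-c'\log p)$. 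I would close by reconciling the scalings: for $n$ polynomial in $p$, the stated condition $\frac{(2k \log p)^7}{n} \precsim n^{-c}$ is stronger than the one required by Lemma~\ref{LemCovX}, so the latter applies, and it additionally forces $k\sqrt{\log p/n} = o(1)$, hence $\delta = \order(1)$.

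I do not expect a genuine obstacle here, only bookkeeping around the size of $\delta$. The crude step $\|v_S\|_1^2 \le 2k\|v\|_2^2$ loses a factor of $2k$ relative to the sharp restricted-operator-norm identity $\sup_v |v^T\Gamma v| = \max_{|S|=2k} \opnorm{\Gamma_{SS}}_2$, so one might worry that $\delta$ is too large. However, the downstream use of this lemma (through Lemma~\ref{LemConeDev} in the proof of Lemma~\ref{LemSigma}) only requires $\delta = \order(1)$, which the stated scaling secures, so chasing the extra $\sqrt{k}$ is unnecessary. I would nonetheless remark that the sharper route is available if a smaller $\delta$ were ever needed, via a covering of the unit sphere within each $2k$-dimensional coordinate subspace combined with the Gaussian approximation of Lemma~\ref{LemMaxGauss} applied to a net of cardinality $\binom{p}{2k}5^{2k} = \exp(\order(k\log p))$; it is exactly a net of this size that accounts for the $(2k\log p)^7$ form of the sample-size condition in the statement.
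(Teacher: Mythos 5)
Your proof is correct, but it takes a genuinely different route from the paper's. The paper proves this lemma with a covering argument: it takes an $\epsilon$-net over all $2k$-dimensional coordinate subspaces (of cardinality $\exp(\order(k \log p))$, exactly as in the argument leading to inequality~\eqref{EqnCovering} in Appendix~\ref{AppRSC}) and then, instead of Hoeffding's inequality, applies the Gaussian approximation of maxima (Lemma~\ref{LemMaxGauss}) to the $\order(p^{2k})$-dimensional vector with coordinates $\pm v_j^T \Gamma v_j$ over the net; this is what produces the $(2k\log p)^7$ form of the sample-size condition and yields $\delta \asymp \sqrt{k \log p/n}$. You instead bypass any net via the deterministic bound $|v^T \Gamma v| \le \|v_S\|_1^2 \|\Gamma\|_{\max} \le 2k \|\Gamma\|_{\max}$ and then invoke the elementwise bound of Lemma~\ref{LemCovX}, obtaining $\delta \asymp k\sqrt{\log p/n}$, larger by a factor of order $\sqrt{k}$. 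The trade-off is exactly as you describe: your route is more elementary, needs only the weaker sample-size condition of Lemma~\ref{LemCovX} (with $\log p$ rather than $k\log p$ raised to the seventh power, which the stated hypothesis implies since $n \ll p$ in the paper's regime), and your coarser $\delta$ is still $o(1)$ under the stated scaling, which suffices for the lemma's only downstream use --- verifying the deviation condition of Lemma~\ref{LemConeDev} to obtain inequality~\eqref{EqnCovBound} in the proof of Lemma~\ref{LemSigma}, where any $\delta = \order(1)$ works. Your approach arguably has an additional advantage you did not claim: applying Lemma~\ref{LemMaxGauss} to the net evaluations, as the paper does, requires coordinatewise variance lower bounds of type (E3)/(E4) for the variables $v_j^T(x_i x_i^T - \Sigma_x)v_j$, which do not follow transparently from (A1); your reduction to $\|\Gamma\|_{\max}$ uses Lemma~\ref{LemCovX} as a black box and avoids this verification entirely.
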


\begin{proof}
We use a covering argument, similar to the argument employed at the end of Appendix~\ref{AppRSC}. The idea is simply to use an $\epsilon$-net argument by taking a union of $\epsilon$-nets over all $2k$-dimensional subspaces to obtain an analog of inequality~\eqref{EqnCovering}. The only appreciable difference is that instead of using Hoeffding's inequality to obtain simultaneous concentration of the $\left\{v_j^T \Gamma v_j\right\}$ terms, where $\{v_j\}$ is the $\epsilon$-net, we employ Lemma~\ref{LemMaxGauss}, where we define $\epsilon_i$ to be a $\order(p^{2k})$-dimensional vector with coordinates $\pm v_j^T \Gamma v_j$. We then have the desired high-probability bound under the assumed sample size scaling.
\end{proof}

Finally, we have a few more related bounds:
\begin{lem*}
\label{LemConcX}
Under assumptions (B1)--(B2), we have
\begin{align}
\left\|\frac{1}{n} \sum_{i=1}^n \psi\left(\frac{\epsilon_i}{\sigmastar}\right) x_i\right\|_\infty  = \order\left(\sqrt{\frac{\log p}{n}}\right), \label{EqnBanana}\\
\left\|\frac{1}{n} \sum_{i=1}^n \psi''\left(\frac{\epsilon_i}{\sigmastar}\right) x_i\right\|_\infty = \order\left(\sqrt{\frac{\log p}{n}}\right), \label{EqnOrange} \\
\left\|\frac{1}{n} \sum_{i=1}^n \psi\left(\frac{\epsilon_i}{\sigmastar}\right) \psi'\left(\frac{\epsilon_i}{\sigmastar}\right) x_i\right\|_\infty = \order\left(\sqrt{\frac{\log p}{n}}\right), \label{EqnPeach} \\
\left\|\frac{1}{n} \sum_{i=1}^n \psi'\left(\frac{\epsilon_i}{\sigmastar}\right) \frac{\epsilon_i}{\sigmastar} x_i\right\|_\infty = \order\left(\sqrt{\frac{\log p}{n}}\right), \label{EqnStrawberry}
\end{align}
with probability at least $1 - Cn^{-c} - \exp(-c'\log p)$.
\end{lem*}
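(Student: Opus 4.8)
The plan is to prove all four bounds by a single argument, observing that each displayed quantity is, coordinatewise, an i.i.d.\ average of products of a function of the error with a covariate, and then invoking the Gaussian approximation of maxima from Lemma~\ref{LemMaxGauss} exactly as in the proof of Lemma~\ref{LemCovX}. Writing $g$ for one of the maps $t \mapsto \psi(t)$, $t \mapsto \psi''(t)$, $t \mapsto \psi(t)\psi'(t)$, or $t \mapsto t\,\psi'(t)$, each of~\eqref{EqnBanana}--\eqref{EqnStrawberry} asserts that $\left\|\frac{1}{n}\sum_{i=1}^n g(\epsilon_i/\sigmastar) x_i\right\|_\infty = \order(\sqrt{\log p/n})$, w.h.p. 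Its $j$th coordinate is the i.i.d.\ average $\frac{1}{n}\sum_{i=1}^n g(\epsilon_i/\sigmastar) x_{ij}$, and since $\epsilon_i \condind x_i$ and $\E[x_{ij}] = 0$, each summand has mean zero regardless of the value of $\E[g(\epsilon_i/\sigmastar)]$.

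First I would verify that the scalar factor $g(\epsilon_i/\sigmastar)$ is sub-Gaussian under the stated hypotheses. For $g = \psi$ and $g = \psi''$ this is immediate from the mgf bounds in (B2); for $g = \psi\psi'$ and $g(t) = t\,\psi'(t)$ it follows from the boundedness of $\psi'$ (a standing assumption in this section) together with the sub-Gaussianity of $\psi(\epsilon_i/\sigmastar)$ from (B2) and of $\epsilon_i/\sigmastar$ from (B1), respectively (alternatively, directly from the mgf bounds in (B3)). Next, assumption (A2) implies that $x_{ij}^2$ is sub-exponential, hence $x_{ij}$ is sub-Gaussian. The key technical step is then to check that the product $g(\epsilon_i/\sigmastar) x_{ij}$ satisfies condition (E3) of Lemma~\ref{LemMaxGauss}: using $|uv| \le \tfrac{1}{2}(u^2 + v^2)$ and independence, $\E\!\left[\exp(|g(\epsilon_i/\sigmastar) x_{ij}|/C)\right]$ factorizes into a product of two finite moment-generating-function values, which is bounded by a constant for $C$ large enough; rescaling $C$ yields the required bound $\le 2$. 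The mean-zero property was already established, and the variance lower bound $\E\!\left[(g(\epsilon_i/\sigmastar)x_{ij})^2\right] = \E[g^2(\epsilon_i/\sigmastar)]\cdot(\Sigma_x)_{jj}$ is bounded below under the nondegeneracy built into (A1) and the positivity of the relevant second moment of $g$.

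With these conditions in hand, I would apply Lemma~\ref{LemMaxGauss} to the $2p$-dimensional vector $\mvec\!\left(\pm\, g(\epsilon_i/\sigmastar) x_i\right)$, whose two-sided coordinatewise maximum equals $\left\|\frac{1}{\sqrt{n}}\sum_{i=1}^n g(\epsilon_i/\sigmastar)x_i\right\|_\infty$ after the $1/\sqrt{n}$ scaling. Under the sample-size scaling $\log^7(2pn)/n \le C_2 n^{-c_2}$ (implied by the scaling $n \succsim k^2\log^3 p$ used throughout the one-step analysis), the lemma gives a Gaussian comparison with error $Cn^{-c}$. Combining this with the maximal Gaussian tail bound recorded immediately after Lemma~\ref{LemMaxGauss}, namely $\max_j Y_j \le 2\sigma_Y\sqrt{\log p}$ with $\sigma_Y^2 = \max_j \Var(g(\epsilon_i/\sigmastar)x_{ij})$ a bounded constant, yields $\left\|\frac{1}{\sqrt{n}}\sum_{i=1}^n g(\epsilon_i/\sigmastar)x_i\right\|_\infty \le 2\sigma_Y\sqrt{\log p}$ with probability at least $1 - Cn^{-c} - \exp(-c'\log p)$; dividing by $\sqrt{n}$ gives the claimed rate $\order(\sqrt{\log p/n})$. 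Since each of the four cases uses this identical template with only the choice of $g$ changing, this establishes all four statements simultaneously. The main obstacle is not any single step but the bookkeeping of confirming, case by case, that the error factor $g(\epsilon_i/\sigmastar)$ is sub-Gaussian under precisely the assumptions invoked; once that is settled, the sub-exponential product bound and the appeal to Lemma~\ref{LemMaxGauss} proceed exactly as in Lemma~\ref{LemCovX}.
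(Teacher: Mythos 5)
Your proposal is correct and follows essentially the same route as the paper's proof: verify the moment-generating-function condition of Lemma~\ref{LemMaxGauss} for the products $g(\epsilon_i/\sigmastar)x_{ij}$ via the bound $|uv| \le \tfrac{1}{2}(u^2+v^2)$, independence, and the assumed mgf bounds, then apply the Gaussian approximation of maxima together with the Gaussian tail bound to the vectorized summands. In fact, you supply details the paper leaves implicit (the mean-zero check, the variance lower bound, and the case-by-case treatment of $\psi''$, $\psi\psi'$, and $t\,\psi'(t)$, which the paper dismisses as ``similar''), so the proposal is, if anything, more complete.
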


\begin{proof}
All the inequalities are proved by applying Lemma~\ref{LemMaxGauss}, and letting $\epsilon_i$ be the vectorized version of the summands. For inequality~\eqref{EqnBanana}, note that
\begin{align*}
\E\left[\exp\left(\frac{\left|\psi\left(\frac{\epsilon_i}{\sigmastar}\right) x_{ij}\right|}{C_1}\right)\right] & \le \E\left[\exp\left(\frac{\psi^2\left(\frac{\epsilon_i}{\sigmastar}\right)}{2C_1} + \frac{x_{ij}^2}{2C_1}\right)\right] \\
& = \E\left[\exp\left(\frac{\psi^2\left(\frac{\epsilon_i}{\sigmastar}\right)}{2C_1}\right)\right] \E\left[\exp\left(\frac{x_{ij}^2}{2C_1}\right)\right] \\
& \le \sqrt{2} \cdot \sqrt{2},
\end{align*}
where the final inequality uses the assumptions and concavity of the square root in Jensen's inequality. Hence, we may apply Lemma~\ref{LemMaxGauss} to obtain the desired bound. Note that the constant prefactor in the upper bound will be equal to $\E\left[\psi^2\left(\frac{\epsilon_i}{\sigmastar}\right)\right] \cdot \max_{j,k} \E[x_{ij} x_{ik}]$.

The remaining inequalities are proved in a similar manner, so we omit the explicit arguments here.
\end{proof}

\begin{lem*}
\label{LemLindeberg}
[Multivariate Lindeberg-Feller CLT~\cite{Gre03}]
Suppose $\{x_n\}_{n \ge 1}$ are independent random vectors such that all mixed third moments are finite. Let $\E[x_i] = \mu_i$ and $\Var[x_i] = Q_i$, and define
\begin{equation*}
\widebar{\mu}_n = \frac{1}{n} \sum_{i=1}^n \mu_i, \qquad \text{and} \qquad \widebar{Q}_n = \frac{1}{n} \sum_{i=1}^n Q_i.
\end{equation*}
Suppose
\begin{equation*}
\lim_{n \rightarrow \infty} \widebar{Q}_n = Q \succeq 0,
\end{equation*}
and for every $i$,
\begin{equation*}
\lim_{n \rightarrow \infty} \left(\widebar{Q}_n\right)^{-1} \frac{Q_i}{n} = 0.
\end{equation*}
Then
\begin{equation*}
\sqrt{n} \left(\frac{1}{n} \sum_{i=1}^n x_i - \bar{\mu}_n\right) \stackrel{d}{\longrightarrow} \scriptN(0, Q).
\end{equation*}
\end{lem*}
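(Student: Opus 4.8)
The plan is to reduce the $k$-dimensional convergence to a one-dimensional statement via the Cram\'er--Wold device, and then apply the classical scalar Lindeberg--Feller theorem, using the finiteness of the third moments to verify the Lindeberg condition through a Lyapunov-type bound. Writing $S_n := \sqrt{n}\big(\frac{1}{n}\sum_{i=1}^n x_i - \widebar{\mu}_n\big) = \frac{1}{\sqrt{n}}\sum_{i=1}^n (x_i - \mu_i)$, it suffices by Cram\'er--Wold to show that for every fixed $t \in \real^k$, we have $t^T S_n \stackrel{d}{\longrightarrow} N(0, \, t^T Q t)$.

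First I would fix $t$ and set $Y_i := t^T(x_i - \mu_i)$, so that the $Y_i$ are independent, mean-zero scalars with variances $\sigma_i^2 := t^T Q_i t$, and $t^T S_n = \frac{1}{\sqrt{n}}\sum_{i=1}^n Y_i$. The variance of this sum equals $\frac{1}{n}\sum_{i=1}^n \sigma_i^2 = t^T \widebar{Q}_n t$, which converges to $s^2 := t^T Q t$ by the hypothesis $\widebar{Q}_n \to Q$. I would then apply the scalar Lindeberg--Feller CLT to the triangular array $Y_{ni} := Y_i / \sqrt{n}$, for which the required asymptotic-variance condition $\sum_{i=1}^n \Var(Y_{ni}) \to s^2$ is exactly the computation just performed.

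The remaining ingredient is the Lindeberg condition, namely $\frac{1}{n}\sum_{i=1}^n \E\!\big[Y_i^2 \, \mathbf{1}\{|Y_i| > \epsilon \sqrt{n}\}\big] \to 0$ for every $\epsilon > 0$. I would establish this through the elementary bound $\mathbf{1}\{|Y_i| > \epsilon\sqrt{n}\} \le |Y_i|/(\epsilon\sqrt{n})$, which yields $\frac{1}{n}\sum_{i=1}^n \E[Y_i^2 \mathbf{1}\{\cdots\}] \le \frac{1}{\epsilon\, n^{3/2}}\sum_{i=1}^n \E[|Y_i|^3]$; the assumed finiteness of the mixed third moments, combined with the negligibility condition $(\widebar{Q}_n)^{-1} Q_i / n \to 0$, forces this average of projected third moments to vanish. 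In the degenerate case $s^2 = 0$, the variance of $t^T S_n$ tends to zero, so $t^T S_n \stackrel{d}{\longrightarrow} 0$, which agrees with the degenerate Gaussian $N(0, t^T Q t)$; thus all directions $t$ are covered.

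The main obstacle will be converting the matrix-level hypotheses into the scalar Lyapunov bound $n^{-3/2}\sum_{i=1}^n \E[|Y_i|^3] \to 0$. The condition $(\widebar{Q}_n)^{-1} Q_i/n \to 0$ is a Feller-type negligibility statement ensuring that no single summand dominates the accumulated variance, and the delicate point is to pair it with the third-moment finiteness rather than a single uniform third-moment bound (which is not assumed). The cleanest route I foresee is to control each $\E|Y_i|^3$ against $\sigma_i^2$ and to exploit that the relative variances $\sigma_i^2 / \sum_{j} \sigma_j^2$ are negligible, which is precisely the content of the stated covariance conditions; once this is in place, the scalar CLT together with Cram\'er--Wold completes the argument.
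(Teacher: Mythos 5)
The paper offers no proof of this lemma at all: it is quoted as a black box from Greene~\cite{Gre03} (it sits in Appendix~\ref{AppUseful} with a citation and no argument), so there is no internal proof to compare routes with, and your attempt must stand on its own. Your architecture (Cram\'er--Wold reduction, then the scalar Lindeberg--Feller theorem for the triangular array $Y_{ni} = t^T(x_i-\mu_i)/\sqrt{n}$) is the standard and correct one, and your handling of degenerate directions $t^T Q t = 0$ is fine. The genuine gap is exactly the step you flagged as the ``main obstacle'': the stated hypotheses do not imply the Lyapunov bound $n^{-3/2}\sum_{i=1}^n \E|Y_i|^3 \to 0$, nor even the Lindeberg condition, and no argument can close this because the lemma as literally transcribed is false. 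Take $k=1$ and let $x_i$ take the values $\pm i$ with probability $\frac{1}{2i^2}$ each and $0$ otherwise. Then $\mu_i = 0$, $Q_i = 1$, and $\E|x_i|^3 = i < \infty$, so all mixed third moments are finite; moreover $\widebar{Q}_n = 1 \to Q = 1$ and $(\widebar{Q}_n)^{-1} Q_i/n = 1/n \to 0$, so every hypothesis holds. Yet
\begin{equation*}
\frac{1}{n^{3/2}} \sum_{i=1}^n \E|x_i|^3 \; \sim \; \frac{\sqrt{n}}{2} \; \longrightarrow \; \infty,
\qquad
\frac{1}{n} \sum_{i=1}^n \E\left[x_i^2 \, 1\{|x_i| > \sqrt{n}\}\right] = \frac{n - \lfloor \sqrt{n} \rfloor}{n} \; \longrightarrow \; 1,
\end{equation*}
and in fact $n^{-1/2}\sum_{i=1}^n x_i \to 0$ in probability rather than to $N(0,1)$: splitting the sum at $i = \lfloor\sqrt{n}\rfloor$, the early block has variance $\lfloor\sqrt{n}\rfloor/n \to 0$, while the late block is identically zero except on an event of probability at most $\sum_{i > \sqrt{n}} i^{-2} \le (\sqrt{n}-1)^{-1} \to 0$.

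What this means for your write-up: the lemma is a loose textbook transcription whose intended reading carries a \emph{uniform} moment bound, e.g.\ $\sup_i \E\|x_i - \mu_i\|_2^3 \le C < \infty$ (or simply takes the Lindeberg condition itself as a hypothesis). Under that reading your argument closes immediately, since by Cauchy--Schwarz $\E|Y_i|^3 \le \|t\|_2^3 \, \E\|x_i - \mu_i\|_2^3 \le C\|t\|_2^3$, so your Lyapunov bound is $\order(n^{-1/2})$ and Cram\'er--Wold finishes the proof. But your proposed fallback---controlling each $\E|Y_i|^3$ by $\sigma_i^2$ and invoking negligibility of the relative variances---cannot be carried out: Jensen's inequality gives $\E|Y_i|^3 \ge (\sigma_i^2)^{3/2}$ and no reverse inequality exists, which is precisely what the counterexample exploits. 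In the paper's only application of this lemma (the CLT step in Lemma~\ref{LemApsi}) the summands are i.i.d., so a uniform third-moment bound is automatic and nothing downstream is affected; but as a proof of the statement as written, your attempt has a gap that can only be repaired by strengthening the hypotheses.
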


\begin{lem*}
\label{LemMADsum}
Suppose $X$ and $Y$ are independent random variables, where $X$ has a symmetric, unimodal density. Then
\begin{equation*}
\MAD(X) \le \MAD(X + Y).
\end{equation*}
\end{lem*}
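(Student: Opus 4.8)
The plan is to recast the statement in terms of the concentration function of $X$ and to use that convolution with an independent variable cannot increase concentration. Since $\MAD$ is invariant under translation and the density of $X$ is symmetric about its median, I would first translate so that $X$ is symmetric about $0$; then $\med(X) = 0$ and, writing $d := \MAD(X) = \med(|X|)$, the absence of atoms gives $\mprob(X \in [-d,d]) = \tfrac12$.

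The crux is the following geometric fact: for a symmetric, unimodal density, the interval of length $2d$ carrying the most probability is the centered one, so that $\mprob(X \in [c - d,\, c+d]) \le \mprob(X \in [-d, d]) = \tfrac12$ for every $c \in \real$. I would prove this via the layer-cake identity $\int_I f = \int_0^\infty \lvert I \cap \{f > t\}\rvert\,dt$: symmetry and unimodality force each super-level set $\{f > t\}$ to be an interval $(-r_t, r_t)$ centered at $0$, and the overlap of any length-$2d$ interval $I$ with a centered interval is largest precisely when $I$ itself is centered, i.e.\ $\lvert I \cap (-r_t, r_t)\rvert \le \lvert [-d,d] \cap (-r_t, r_t)\rvert$ for every $t$; integrating over $t$ yields the claim. (Equivalently, $g(c) = \int_{c-d}^{c+d} f$ is even and satisfies $g'(c) = f(c+d) - f(c-d) \le 0$ for $c \ge 0$ by unimodality.)

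Conditioning on $Y$ and using independence, for every $a \in \real$ I would then bound $\mprob(X + Y \in [a - d,\, a+d]) = \E_Y\bigl[\mprob(X \in [(a - Y) - d,\, (a-Y)+d] \mid Y)\bigr] \le \tfrac12$, applying the geometric fact with $c = a - Y$ inside the expectation. Thus no interval of length $2d$ captures more than half the mass of $S := X + Y$. Finally, letting $m = \med(S)$ and taking $a = m$ gives $\mprob(|S - m| \le d) \le \tfrac12$, hence $\mprob(|S-m| \ge d) \ge \tfrac12$, so the radius about the median of $S$ needed to capture half the mass is at least $d$; that is, $\MAD(S) \ge d = \MAD(X)$.

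The main obstacle is the geometric fact of the second step---making rigorous that the centered interval is probability-maximizing among intervals of a fixed length for a symmetric unimodal density---for which the layer-cake/rearrangement argument is the cleanest route. A secondary point requiring mild care is the last step: because $X$ (hence $S$) is atomless, the inequality $\mprob(|S-m| \le d) \le \tfrac12$ indeed forces $\med(|S-m|) \ge d$, but one should verify there is no degenerate flat stretch of the distribution of $|S-m|$ just below $d$; the strict version of unimodality rules this out, and in the non-strict case equality $\MAD(S) = \MAD(X)$ is attained, consistent with the asserted inequality.
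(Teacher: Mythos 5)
Your proof is correct, and its skeleton coincides with the paper's: translate so that $X$ is symmetric about $0$, condition on $Y = y$, and compare the mass that the symmetric unimodal density $f$ assigns to a shifted window of half-length $d = \MAD(X)$ against the centered window $[-d,d]$. The difference is in how that comparison is established. The paper works with the complement, showing $\mprob\big(|X + y - M| \ge d\big) \ge \tfrac12$ by a three-case analysis on $M - y$: when the shifted window misses the origin, its complement contains a half-line through $0$ and symmetry alone gives $\tfrac12$; when it straddles the origin, the paper pairs the two boundary strips $[\,d, d + (M-y)\,)$ and $[\,d - (M-y), d\,)$ and uses monotonicity of $f$ on $[0,\infty)$. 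Your layer-cake identity (the one-dimensional case of Anderson's lemma) proves the window comparison $\mprob(X \in [c-d, c+d]) \le \mprob(X \in [-d,d])$ for every $c$ in a single stroke, with no case analysis; passing between ``inside $\le \tfrac12$'' and ``outside $\ge \tfrac12$'' is immaterial because $X+Y$ has a density. Your route is shorter and isolates a reusable maximality lemma; the paper's is more hands-on, using nothing beyond monotonicity of $f$, which is why it needs the cases.

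On the technicality you flag at the end: you are right that non-uniqueness of the median of $|S - m|$, $S := X+Y$, needs attention, but your resolution (``in the non-strict case equality is attained'') is an assertion, not an argument. Note the paper has the identical gap, since $\mprob(|S - M| \ge d) \ge \tfrac12$ by itself only shows that \emph{some} median of $|S-M|$ is at least $d$. The degenerate case can actually be excluded: a flat stretch of the distribution function of $|S-m|$ at level $\tfrac12$ on an interval $[t_0, d]$ would force the density of $S$ to vanish on $\{s : t_0 < |s-m| \le d\}$; since the support of $f$ is an interval $(-R,R)$ with $R > d$ (because $\mprob(|X|\le d) = \tfrac12 < 1$), this forces $|Y - m| \ge d + R$ almost surely, whence $|S - m| \ge |Y-m| - |X| > d \ge t_0$ almost surely, contradicting $\mprob(|S-m| \le t_0) = \tfrac12$. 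With that observation, your inequality yields $\MAD(X+Y) \ge \MAD(X)$ under any convention for the median, and your proof is complete.
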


\begin{proof}
Without loss of generality, we assume the distribution of $X$ is symmetric around 0. Note that
it suffices to show that
\begin{equation*}
\mprob\Big(\left|X + Y - \med(X+Y)\right| \ge \MAD(X)\Big) \ge \frac{1}{2}.
\end{equation*}
Indeed, we will show this inequality holds for any fixed value $Y = y$:
\begin{equation}
\label{EqnMADineq}
\mprob\Big(\left|X + y - M\right| \ge \MAD(X)\Big) \ge \frac{1}{2},
\end{equation}
where we have denoted $M = \med(X+Y)$. We may then write the left-hand probability as
\begin{equation*}
\mprob\left(X \ge M-y + \MAD(X)\right) + \mprob\left(X \le M-y - \MAD(X)\right) := I + II.
\end{equation*}
Note that:
\begin{enumerate}
\item If $M-y \ge \MAD(X)$, we have
\begin{equation*}
II \ge \mprob(X \le 0) \ge \frac{1}{2}.
\end{equation*}
\item If $M - y \le -\MAD(X)$, we have
\begin{equation*}
I \ge \mprob(X \ge 0) \ge \frac{1}{2}.
\end{equation*}
\item Otherwise, suppose $0 \le M-y < \MAD(X)$ (the case when $M-y$ is negative is analogous). Consider
\begin{align*}
& \Big(I + II\Big) - \Big(\mprob(X \ge \MAD(X)) + \mprob(X \le -\MAD(X)\Big) \\
& \qquad = - \mprob\Big(\MAD(X) \le X < \MAD(X) + M-y \Big) \\
& \qquad \qquad + \mprob\Big(-\MAD(X) < X \le -\MAD(X) + M-y\Big) \\
& \qquad = - \mprob\Big(\MAD(X) \le X < \MAD(X) + M-y\Big) \\
& \qquad \qquad + \mprob\Big(\MAD(X) - M+y \le X < \MAD(X) \Big) \\
& \qquad \ge 0,
\end{align*}
where the final inequality comes from the assumption that the pdf of $X$ is unimodal, hence is a nonincreasing function on the interval $\MAD(X) \pm (M-y)$. We conclude that
\begin{equation*}
I+II \ge \mprob\Big(|X| \ge \MAD(X)\Big) \ge \frac{1}{2}
\end{equation*}
in this case, as well.
\end{enumerate}
This establishes inequality~\eqref{EqnMADineq}.
\end{proof}

\bibliography{refs.bib}

\end{document}